\numberwithin{equation}{section}
\newtheorem*{Thm*}{Theorem}
\newtheorem{Claim}[equation]{Claim}
\newtheorem{Cor}[equation]{Corollary}
\newtheorem{Lem}[equation]{Lemma}
\newtheorem{Prop}[equation]{Proposition}
\newtheorem{Thm}[equation]{Theorem}
\theoremstyle{remark}
\newtheorem{Def}[equation]{Definition}
\newtheorem{Exa}[equation]{Example}
\newtheorem{Hyp}[equation]{Hypothesis}
\newtheorem{Not}[equation]{Notation}
\newtheorem{Que}[equation]{Question}
\newtheorem{Rec}[equation]{Recollection}
\newtheorem{Rem}[equation]{Remark}
\newcommand{\nc}{\newcommand}
\nc{\dmo}{\DeclareMathOperator}
\dmo{\Ab}{Ab}
\dmo{\CAlg}{CAlg}
\dmo{\cname}{c}
\dmo{\colim}{colim}
\dmo{\cone}{cone}
\dmo{\Der}{D}
\dmo{\DMack}{DMack}
\dmo{\dname}{d}
\dmo{\DRep}{DRep}
\dmo{\Excs}{Exc}
\dmo{\Fun}{Fun}
\dmo{\gen}{gen}
\dmo{\height}{height}
\dmo{\Ho}{Ho}
\dmo{\hocolim}{hocolim}
\dmo{\Hom}{Hom}
\dmo{\homol}{h}
\dmo{\id}{id}
\dmo{\Id}{Id}
\dmo{\Img}{Im}
\dmo{\incl}{incl}
\dmo{\Ind}{Ind}
\dmo{\Inj}{Inj}
\dmo{\Ker}{Ker}
\dmo{\Komp}{K}
\dmo{\Kos}{Kos}
\dmo{\Locname}{Loc}
\dmo{\Nil}{Nil}
\dmo{\opname}{op}
\dmo{\perf}{perf}
\dmo{\rank}{rank}
\dmo{\rmH}{H}
\dmo{\SH}{SH}
\dmo{\Sp}{Sp}
\dmo{\Spc}{Spc}
\dmo{\Spec}{Spec}
\dmo{\Sphere}{\mathbb{S}}
\dmo{\stab}{stab}
\dmo{\Stab}{Stab}
\dmo{\supp}{supp}
\dmo{\Supp}{Supp}
\dmo{\Thick}{Thick}
\dmo{\triv}{triv}
\nc{\adh}[1]{\mkern 1.5mu\overline{\mkern-1.5mu#1\mkern-1.5mu}\mkern 1.5mu}
\nc{\adhpt}[1]{\adh{\{#1\}}}
\nc{\adj}{\dashv}
\nc{\aka}{{a.\,k.\,a.}\ }
\nc{\bbF}{\mathbb{F}}
\nc{\bbN}{\mathbb{N}}
\nc{\bbP}{\mathbb{P}}
\nc{\bbZ}{\mathbb{Z}}
\nc{\bG}{b_G}
\nc{\cA}{\cat{A}}
\nc{\cat}[1]{\mathscr{#1}}
\nc{\cB}{\cat{B}}
\nc{\cc}{^{\cname}}
\nc{\cJ}{\cat{J}}
\nc{\cK}{\cat{K}}
\nc{\cL}{\cat{L}}
\nc{\cM}{\cat{M}}
\nc{\compunit}{\widehat{\unit}}
\nc{\cP}{\cat{P}}
\nc{\cQ}{\cat{Q}}
\nc{\cR}{\cat{R}}
\nc{\cS}{\cat{S}}
\nc{\cT}{\cat{T}}
\nc{\Db}{\Der^{\mathrm{b}}}
\nc{\dd}{^{\dname}}
\nc{\Dperf}{\Der^{\perf}}
\nc{\ee}{\mathbb{e}}
\nc{\eg}{{\sl e.\,g.}}
\nc{\Exc}[1]{\Excs_{#1}}
\nc{\eY}{\ee_Y}
\nc{\eYfinv}{\ee_{f^{-1}(Y)}}
\nc{\eZ}{\ee_Z}
\nc{\ff}{\mathbb{f}}
\nc{\fhat}[1]{\wideparen{#1}}
\nc{\fhomol}{f^{\hspace{0.5pt}\homol}}
\nc{\Fp}{{\bbF_{\hspace{-0.1em}p}}}
\nc{\fY}{\ff_Y}
\nc{\fYfinv}{\ff_{f^{-1}(Y)}}
\nc{\fZ}{\ff_Z}
\nc{\gm}{\mathfrak{m}}
\nc{\hcomp}{\pi}
\nc{\HFp}{{\rmH \hspace{-0.15em}\bbF_{\hspace{-0.1em}p}}}
\nc{\Hk}{{\rmH \hspace{-0.1em}k}}
\nc{\Homcat}[1]{\Hom_{\cat #1}}
\nc{\hook}{\hookrightarrow}
\nc{\hSd}{\hat{\cS}\dd}
\nc{\hTd}{\hat{\cT}^{\,\dname}}
\nc{\hTuYd}{(\hat{\cT}^{\hspace{0.1ex}Y})\dd}
\nc{\ideal}[1]{\langle #1\rangle}
\nc{\ie}{{\sl i.e.}, }
\nc{\into}{\mathop{\rightarrowtail}}
\nc{\inv}{^{-1}}
\nc{\isoto}{\mathop{\overset{\sim}\to}}
\nc{\Kac}{\Komp_{\textrm{ac}}}
\nc{\KInj}{\Komp\Inj}
\nc{\Loc}[1]{\Locname\langle#1\rangle}%
\nc{\Loco}[1]{\Locname_{\otimes}\hspace{-0.3ex}\langle#1\rangle}%
\nc{\Mid}{\,\big|\,}
\nc{\Mod}{\mathrm{Mod}\textrm{\rm-}}%
\nc{\Modd}{\mathrm{Mod}}
\nc{\mmod}{\textrm{\rm-}\mathrm{mod}}%
\nc{\num}[1]{[#1]}
\nc{\onto}{\mathop{\twoheadrightarrow}}
\nc{\op}{^{\opname}}
\nc{\overbar}[1]{\mkern 1.5mu\overline{\mkern-1.5mu#1\mkern-1.5mu}\mkern 1.5mu}
\nc{\potimes}[1]{^{\otimes #1}}
\nc{\qquadtext}[1]{\qquad\textrm{#1}\qquad}
\nc{\quadtext}[1]{\quad\textrm{#1}\quad}
\nc{\restr}[1]{|_{#1}}
\nc{\sbull}{{\scriptscriptstyle\bullet}}
\nc{\Sc}{\cS\cc}
\nc{\Sd}{\cS\dd}
\nc{\SET}[2]{\big\{\,#1\Mid#2\,\big\}}
\nc{\SHd}{\SH^{\kern.05em\dname}}
\nc{\SHdp}{{\SHd_{(p)}}}
\nc{\SHEnd}{\SH_{E(n)}^{\kern.05em\dname}}
\nc{\SHGd}{\SH(G)^{\kern.05em\dname}}
\nc{\SHKnd}{\SH_{K(n)}^{\kern.05em\dname}}
\nc{\sminus}{\smallsetminus}
\nc{\Spch}{\Spc^{\homol}}
\nc{\SpchT}{\Spch(\Td)}
\nc{\SpcS}{\Spc(\cat S\dd)}
\nc{\SpcT}{\Spc(\Td)}
\nc{\Spech}{\Spec^{\homol}}
\nc{\Supph}{\Supp^{\homol}}
\nc{\SZ}{\cS_{Z}}
\nc{\SZp}{\SZ^{\perp}}
\nc{\SZpp}{\SZ^{\perp\perp}}
\nc{\Tc}{\cT^{\kern.1em\cname}}
\nc{\Td}{\cT^{\kern.1em\dname}}
\nc{\td}{\ttt_d}
\nc{\tG}{\ttt_G}
\nc{\tH}{\ttt_H}
\nc{\then}{\Rightarrow}
\nc{\ttt}{\mathbb{t}}
\nc{\TU}{\cT\restr{U}}
\nc{\TUd}{\TU^{\hspace{0.09em}\dname}}
\nc{\TY}{\cT_Y}%
\nc{\tY}{\ttt_Y}
\nc{\TYc}{\TY^{\kern0.1em\cname}}
\nc{\TYp}{\TY^{\perp}}
\nc{\TYpp}{\TY^{\perp\perp}}
\nc{\unit}{\mathbb{1}}
\nc{\unitS}{\unit_{\cat S}}
\nc{\unitT}{\unit_{\cat T}}
\nc{\vbuff}{\vphantom{I^{I}_J}}
\nc{\xto}[1]{\xrightarrow{#1}}
\nc{\Ycons}{{\overbar{Y}}^{\mathrm{cons}}}
\nc{\Yinv}{{\overbar{Y}}^{\mathrm{inv}}}
\renewcommand{\complement}{\mathsf{C}}
\renewcommand{\emptyset}{\varnothing}
\renewcommand{\mod}{\mathrm{mod}\textrm{\rm-}}%
\newcounter{enum-resume-hack}
\Crefname{Thm}{Theorem}{Theorems}
\Crefname{Prop}{Proposition}{Propositions}
\Crefname{thmx}{Theorem}{Theorems}
\title{The Tate Intermediate Value Theorem}
\author[P.\ Balmer]{Paul Balmer}
\address{UCLA Mathematics Department, Los Angeles, CA 90095-1555, USA}
\email{balmer@math.ucla.edu}
\urladdr{https://math.ucla.edu/$\sim$balmer/}
\author[B.\ Sanders]{Beren Sanders}
\address{Mathematics Department, UC Santa Cruz, 95064 CA, USA}
\email{beren@ucsc.edu}
\urladdr{https://people.ucsc.edu/$\sim$beren/}
\date{2025 March 13}
\subjclass[2020]{18F99}
\keywords{Tensor triangular geometry, completion, Tate ring support}
\thanks{The first author was supported by NSF grant DMS-215375.}
\begin{document}


\maketitle

\begin{abstract}
	\vspace*{-3em}	We explain how the gluing of a closed piece of the tensor-triangular spectrum with its open complement hinges on the support of the Tate ring.
\end{abstract}

{
\hypersetup{linkcolor=black}
\tableofcontents
}

\section{Introduction}

Let $\cat T$ be a rigidly-compactly generated tensor-triangulated category. Write~$\Td$ for the subcategory of `small' objects in~$\cT$, namely the dualizable objects or, equivalently, the compact ones. The premier invariant of tensor triangular geometry is the spectrum~$\SpcT$. This topological space yields the classification of all tt-ideals of~$\Td$. It also provides information about the support of `big' objects of~$\cT$. This explains why the space~$\SpcT$ has been computed, is being computed, and will be computed in a wide range of examples.

When attempting such computations, it is standard to decompose~$\SpcT$ into subspaces, like the images of maps~$\SpcS\to\SpcT$ induced by auxiliary \mbox{tt-functors} $\cT\to\cS$. In doing so, we often encounter a partition of the spectrum
	\begin{equation}
	\label{eq:UV}%
		\SpcT=U\sqcup Y
	\end{equation}
into disjoint subsets~$U$ and~$Y$ whose topology, as subspaces of~$\SpcT$, we understand independently. To fix the ideas, assume that $Y$ is closed and that $U$ is quasi-compact open. (For specialists: it suffices for $Y$ to be a Thomason subset.)

In such a situation, the \emph{set}~$\SpcT$ is completely determined by~\eqref{eq:UV} and a great deal of its topology reduces to that of the subspaces~$U$ and~$Y$. Indeed, the topology is often characterized by \emph{specializations}~$x\leadsto y$. This notation, borrowed from algebraic geometry, indicates that $y\in\adhpt{x}$. We say that $y$ is a specialization of~$x$, or that $x$ a generalization of~$y$. If both $x$ and~$y$ belong to~$U$, or if they both belong to~$Y$, the information about $x\leadsto y$ comes from our knowledge of the spaces~$U$ and~$Y$. Since $U$ is generalization-closed and $Y$ is specialization-closed, the only mystery is:
\begin{Que}
\label{Que:specialization}%
	\emph{For $x$ in~$U$ and~$y$ in~$Y$, when do we have $x\leadsto y$?}
\end{Que}
To appreciate the problem, the reader should imagine the case where both~$U$ and~$Y$ are at the same time closed and open, so that $\SpcT$ is a topological coproduct of~$U$ and~$Y$; in this situation the answer to \Cref{Que:specialization} is `Never'. This happens exactly when $\cT=\cT_1\times \cT_2$ is a product of two tt-categories, such that $U=\Spc(\cT_1\dd)$ and~$Y=\Spc(\cT_2\dd)$. See \Cref{Rem:split}. The mere decomposition~\eqref{eq:UV} cannot differentiate this split case from a non-split case. The spaces~$U$ and~$Y$, in isolation, cannot remember the full topology of~$\SpcT$. We need some information that bridges the gap between~$U$ and~$Y$, in order to answer~\Cref{Que:specialization}.

To this end, we invoke the \emph{Tate ring}~$\tY$ associated to~$Y$. This construction is due to Greenlees~\cite{Greenlees01} and is recalled in \cref{Def:Tate} below. The ring object
	\[
		\tY = \compunit\restr{U}
	\]
is obtained in~$\cT$ from the unit~$\unit$ of the tensor product by completing~$\smash{\widehat{(-)}}$ along the closed subset~$Y$ and then localizing~$(-)\restr{U}$ onto the open complement~$U$. The support of~$\tY$ (as a `big' object, see~\Cref{Def:Supp}) is entirely contained in~$U$. We can now state our \emph{Tate Intermediate Value Theorem}:
\begin{Thm}[\cref{Cor:TIV}]
\label{Thm:TIV-intro}%
	Let $Y\subset\SpcT$ be a Thomason subset (\eg\ a closed subset with quasi-compact complement) and let $x\in\SpcT\sminus Y$ and $y\in Y$ be two points, one outside of~$Y$ and one inside~$Y$. We have a specialization~$x\leadsto y$ in~$\SpcT$ if and only if there exists an intermediate point~$z\in\Supp(\tY)$ in the support of the Tate ring such that $x\leadsto z\leadsto y$.
\end{Thm}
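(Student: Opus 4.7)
The ``if'' direction is a triviality: specialization is transitive (closures are closed under specialization), so $x \leadsto z$ and $z \leadsto y$ immediately give $x \leadsto y$. The content is the ``only if'' direction, and my plan is to establish a sharper statement, namely that \emph{$x$ itself already lies in $\Supp(\tY)$}. Granted this, we simply set $z \coloneqq x$: then $x \leadsto z$ is automatic and $z \leadsto y$ is the hypothesis. So the theorem reduces to showing: if $x \in U \coloneqq \SpcT \sminus Y$ and $x \leadsto y$ for some $y \in Y$, then $x \in \Supp(\tY)$.

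\textbf{Reduction to a support computation.} Since $\tY = \compunit\restr{U}$, we have $\Supp(\tY) = \Supp(\compunit) \cap U$, so the point is to show $x \in \Supp(\compunit)$. Consider the completion fibre sequence
\[
V\unit \to \unit \to \compunit
\]
where $V\unit$ is the colocalization complement of the completion. Because $\Supp(\unit) = \SpcT$, every point of the spectrum lies in $\Supp(V\unit) \cup \Supp(\compunit)$, so it is enough to exclude our $x$ from $\Supp(V\unit)$. Heuristically $V\unit$ is ``$Y$-acyclic'' in the sense that it is supported only on points of $U$ whose closures miss $Y$ entirely; a point $x$ with $\overline{\{x\}} \cap Y \neq \emptyset$ is then necessarily detected by $\compunit$ rather than by $V\unit$.

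\textbf{Where the work lies.} The substantive step is the identification of $\Supp(V\unit)$ (equivalently of $\Supp(\compunit)$). The natural tool is a residue-object / Balmer--Favi criterion for the support of big objects: pick an appropriate point object $\kappa(x)$ at $x$ and show $\kappa(x) \otimes V\unit = 0$. This is not formal, because completion is a \emph{right} adjoint and does not interact with the tensor product of big objects in the naive way. The cleanest route I see is to pass to the open subset $W \subseteq \SpcT$ consisting of the generalizations of $y$, where $y$ becomes the unique closed point and $x$ is generic above $y$. In the local tt-category $\cT\restr{W}$ the Tate construction reduces to a familiar ``local'' object whose support can be computed directly, and compatibility of completion and of the Tate construction with restriction to opens then transports the verdict back to the global setting.

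\textbf{Main obstacle.} The delicate points are therefore (i) making the local-to-global transfer precise — knowing that $\Supp(\tY)$ is compatible with restriction to the open $W$ of generalizations of $y$, and that completion commutes with this restriction in the relevant sense — and (ii) verifying that the Tate ring is fine enough to witness the individual specialization $x \leadsto y$ rather than merely the closed subset $Y$ in bulk. Together these two points are what turn the heuristic ``$V\unit$ is $Y$-acyclic'' into a rigorous statement about the residue object at $x$, and this is where I expect the real effort of the proof to sit.
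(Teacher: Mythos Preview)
Your strategy has a genuine gap: the sharper statement you aim for --- that $x$ itself lies in $\Supp(\tY)$ whenever $x\in U$ and $x\leadsto y$ for some $y\in Y$ --- is \emph{false}. The paper's \Cref{exa:valuation-domain} gives a local ring $R$ whose spectrum is a chain $0\leadsto\mathfrak p\leadsto\mathfrak m$; taking $Y=\{\mathfrak m\}$, one has $\Supp(\tY(\unit))=\{\mathfrak p\}$ only. The generic point $x=0$ satisfies $0\leadsto\mathfrak m\in Y$ but $0\notin\Supp(\tY(\unit))$. The same phenomenon occurs in the excisive example at the end of the paper (for $d=3$, $p=2$): points of $L_1$ specialize into $Y=L_3$, yet $\Supp(\tY(\unit))=L_2$ misses $L_1$ entirely. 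So the intermediate point $z$ genuinely needs to be found \emph{strictly between} $x$ and $y$ in general; you cannot take $z=x$.

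Concretely, your heuristic that $V\unit=[\fY,\unit]$ is ``supported only on points of $U$ whose closures miss $Y$'' is precisely what fails: in the valuation example the generic point has closure meeting $Y$, yet it belongs to $\Supp([\fY,\unit])$ (it must, since it is not in $\Supp(\compunit)=\Img(\varphi_Y)$). The paper's argument is quite different: it reduces to the case where $\Td$ is local with closed point $y$, then works in the \emph{completion} $\hTd$ rather than in $\cT$, constructing a prime $\cQ\in\Spc(\hTd)$ via the prime-existence lemma applied to the tt-ideal $\cJ=\SET{\hat c}{c\in\TYc}$ and the multiplicative set $S=\SET{\hat b}{b\notin\cP_0}$. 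The heart of the matter is the disjointness $S\cap\cJ=\varnothing$ (\Cref{claim:S-J}), which uses locality and the hypothesis $x\leadsto y$ in an essential way. The image $\varphi(\cQ)$ is then the desired intermediate point $z$, lying in $\Supp(\tY(\unit))=\Img(\varphi)\cap U$ by \Cref{Cor:Spc(compl)}.
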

	\[
	  \begin{tikzpicture}
		\filldraw[color=cyan,rounded corners] (-1,0) -- (-.5,-1.5) -- (0,-1.0)--(1.0,-2.5) --(1.5,-2.5) -- (3.5,-1.0)-- (4,-1.5)-- (4.5,0);
		\draw[color=black,rounded corners] (-1,0) -- (-.5,-1.5) -- (0,-1.0)--(1.0,-2.5) --(1.5,-2.5) -- (3.5,-1.0)-- (4,-1.5)-- (4.5,0);
		\filldraw[color=yellow!60,rounded corners] (0,0) -- (1,-1.5) --(1.5,-1.5) -- (2.5,0);
		\draw[color=black,rounded corners] (0,0) -- (1,-1.5) --(1.5,-1.5) -- (2.5,0);
		\node at (1,-0.4) {$Y$};
		\draw (1.25,-1) node {$\bullet$};
		\draw[thick] (1.25,-1) -- (1.25,-3);
		\node at (1.5,-1) {$y$};
		\draw (1.5,-1.9) node {$\bullet$}; \node at (1.8,-1.8) {$\exists z$}; \draw[thick,dashed] (1.25,-1) -- (1.5,-1.9) -- (1.25,-3);
		\node at (3.25,-.6) {$\Supp(\tY)$};
		\draw (1.25,-3) node {$\bullet$}; \node at (1.5,-3) {$x$};
	  \end{tikzpicture}
	\]
This result explains how $Y$ is attached to its complement in $\Spc(\Td)$. For instance, the split case mentioned above (that is, $Y$ open and closed) is exactly the situation where~$\tY=0$, in which case $\Supp(\tY)=\varnothing$ and no point~$z$ can exist.

Note that the complement $U=\SpcT\sminus Y$ may be identified with the spectrum of the localization $\cat T \to \TU:=\cT/\TY$ away from~$Y$. On the other hand, we will see that $Y \cup \Supp(\tY)$ is precisely the image of the map on spectra induced by the completion $\Td \to \hTd$ along~$Y$. The overlap between those two pieces is precisely~$\Supp(\tY)$. The `bottom' specialization $x \leadsto z$ in \Cref{Thm:TIV-intro} holds in the spectrum of the localization~$\TU$. On the other hand, it follows from a strong form of the Tate Intermediate Value Theorem (\cref{Thm:TIV}) that one can find $z$ so that the `top' specialization $z \leadsto y$ is also the image of such a specialization in the spectrum of the completion~$\hTd$.

We can consider the commutative square of tt-categories
	\begin{equation}
	\label{eq:pb-square}%
	\vcenter{\xymatrix{
	\Td \ar[r]^-{\widehat{(-)}} \ar@{->>}[d]_-{(-)\restr{U}}
	& \hTd \ar@{->>}[d]^-{(-)\restr{V}}
	\\
	\Td\restr{U} \ar[r]
	& \hTd\restr{V}
	}}
	\end{equation}
comparing completion along~$Y$, in the top row, with the respective localizations on~$U$ and on $V=\varphi\inv(U)$, where~$\varphi\colon\Spc(\hTd)\to \SpcT$ is the map induced by completion. We prove in \cref{Cor:pushout} that the induced square on tt-spectra is a pushout. This explains how $\SpcT$ is recovered from~$U=\Spc(\Td\restr{U})$ and $\Spc(\hTd)$ glued along $V$. This pushout property nicely matches a recent result of Naumann--Pol--Ramzi~\cite[Theorem~5.11]{NaumannPolRamzi24} stating that the `fracture' square of $\infty$-categories underlying \eqref{eq:pb-square} is cartesian in a suitable $\infty$-category of $\infty$-categories.

Our set-up is extremely general: Tensor-triangulated categories are abundant throughout mathematics and their spectra offer an unlimited supply of Thomason subsets~$Y$ to play with. It is therefore unsurprising that we can give a long list of examples that illustrate our results. The simpler of these examples are sprinkled over the first few sections. The less elementary ones are concentrated in the last section of the paper, which is entirely dedicated to examples and applications.


\subsection*{Outline of the paper:}
We briefly recall some standard definitions and terminology in \cref{sec:basics} including notions of support for big objects in tt-geometry. We use this support to give a description of the image of a map on spectra which may be of independent interest (\cref{Thm:Img=Supp(A)}). In \cref{sec:recollement} we recall the recollements of $Y$-torsion, $U$-local and $Y$-complete subcategories and we clarify the meaning of completion along~$Y$. We discuss the completion map~$\varphi$ on the spectrum in~\Cref{sec:Spc-completion}.
Among other things, we prove that $\varphi$ is a homeomorphism above~$Y$ (\Cref{Thm:Spc(compl)-on-Y}).
Then in \cref{sec:functorial} we make preparations for the proof of the main theorem by analyzing the interaction between completion and geometric tt-functors, in particular localizations.
We also prove in \Cref{Thm:Tate-excision} that the Tate construction satisfies excision around~$Y$.
In \cref{sec:TIV} we state and prove the main theorem and we give the description of~$\SpcT$ as a pushout in \cref{sec:topology}. We close the paper in \cref{sec:examples} with a survey of examples arising in chromatic homotopy theory (\cref{Thm:p-completion} and \cref{Prop:Kn}), equivariant stable homotopy theory (\cref{Thm:equiv}), modular representation theory (\cref{Prop:DMack}) and Goodwillie calculus (\cref{Thm:excisive}).

\section{Support of big objects}
\label{sec:basics}%

A standard reference for triangulated categories is Neeman's textbook~\cite{Neeman01}. The original reference for tt-geometry is~\cite{Balmer05a}.

\begin{Rec}
\label{Rec:big}%
	A \emph{rigidly-compactly generated tensor-triangulated category} $\cat T$, or \emph{a `big' tt-category} for short, is a triangulated category with a compatible closed symmetric monoidal structure in the sense of~\cite[Appendix~A]{HoveyPalmieriStrickland97}, which admits arbitrary coproducts, which is compactly generated as a triangulated category, and which has the property that its compact objects coincide with its dualizable objects. The latter are the `small' objects of~$\cT$ and they form an essentially small tt-subcategory~$\Td \subset \cat T$. By construction, the tt-category $\Td$ is \emph{rigid}, meaning that every object is dualizable.
\end{Rec}

\begin{Not}
	We write $\otimes$ for the tensor, $[-,-]$ for the internal hom and $\unit$ for the $\otimes$-unit. For $c \in \Td$ dualizable we write $c^{\vee} \coloneqq [c,\unit]$ for its dual.
\end{Not}

\begin{Rec}
\label{Rec:geometric}%
	A \emph{geometric functor}~$f^*\colon \cT \to \cS$ between `big' tt-categories is a tensor-triangulated functor that commutes with arbitrary coproducts. Such a functor admits a right adjoint~$f_*\colon\cS\to \cT$ which itself admits a further right adjoint $f^!\colon \cT \to \cS$; see~\cite{Neeman96} and~\cite[Section 2]{BalmerDellAmbrogioSanders16}. We also have $f^*(\Td)\subseteq \cat \Sd$ since tensor functors preserve dualizable objects. Thus we have the restricted tt-functor $f^*\colon \Td \to \Sd$ and a continuous map $f\coloneqq\Spc(f^*)\colon \Spc(\Sd)\to\Spc(\Td)$.
\end{Rec}

\begin{Rec}
\label{Rec:mod}%
	For an essentially small additive category~$\cK$, like~$\cK=\Td$, a (right) \emph{$\cK$-module} is an additive functor $\cK\op\to \Ab$ to abelian groups. We denote by~$\Mod\cK$ the abelian category of all such modules and by $\mod\cK$ the subcategory of \emph{finitely presented} ones, \ie the cokernels of maps of~$\cK$ realized in~$\Mod\cK$ via Yoneda. See more in~\cite[Sections 1--2]{Krause00} or~\cite[Chapter~5]{Neeman01}.

	If $\cK$ is triangulated then the Yoneda embedding~$\homol\colon\cK\to \mod\cK$ is the universal homological functor to an abelian category~\cite[Lemma~2.1]{Krause00} and~$\homol\colon\cK\to \Mod\cK$ is the universal homological functor to an abelian category satisfying~(AB\,5)~\cite[Lemma~2.2]{Krause00}. Hence any triangulated functor ${f^*\colon \cK\to \cL}$ to another essentially small triangulated category induces a unique \emph{exact} coproduct-preserving functor $\tilde{f}^*\colon \Mod\cK\to \Mod\cL$ that also preserves finite presentation~$\tilde f^*(\mod\cK)\subseteq\mod\cL$ and agrees with~$f^*$ under Yoneda: $\homol\circ f^*\cong \tilde{f}^*\circ\homol$.

	If $\cK$ is \emph{tensor-}triangulated then $\Mod\cK$ inherits a unique tensor product (Day convolution) that is right-exact and commutes with coproducts in both variables, preserves $\mod\cK\times\mod\cK\to \mod\cK$ and makes the Yoneda embedding into a tensor functor. Moreover, this tensor makes the induced functor~$\tilde{f}^*\colon \Mod\cK\to \Mod\cL$ into a tensor functor whenever $f^*\colon \cK\to \cL$ is a tt-functor.
\end{Rec}

\begin{Rec}
	A \emph{weak ring} in~$\cT$ is an object~$A$ with a morphism~$\eta\colon \unit\to A$ such that $A\otimes \eta\colon A\to A\otimes A$ is a split monomorphism. Of course, ring objects and their unit map~$\eta$ define weak rings, since multiplication is a retraction of~$A\otimes\eta$.
\end{Rec}

\begin{Rec}
\label{Rec:supp}%
	The support $\supp(c)\subseteq\SpcT$ of a small object~$c\in\Td$ is built into the definition of the spectrum. For big objects~$t\in\cT$ several notions of support are conceivable. The first general one was proposed in~\cite{BalmerFavi11} by means of idempotents but the construction needs the space~$\Spc(\Td)$ to be weakly noetherian; see also~\cite{BarthelHeardSanders23b}. A more general notion of support for big objects is the \emph{homological support}~$\Supph(t)$ of~\cite{Balmer20_bigsupport}. It is a subset of the \emph{homological spectrum}
	\begin{equation}
	\label{eq:Spch}%
		\Spch(\Td)\coloneqq\{\cB\subsetneq \mod\Td \textrm{ maximal Serre $\otimes$-ideal}\}.
	\end{equation}
	For every `homological prime'~$\cB$ in~$\Spch(\Td)$, we have a homological $\otimes$-functor
	\[
		\xymatrix{\homol_{\cB}\colon \cT \ar[r]^-{\homol} & \Mod\Td  \ar[r]^-{Q_{\cB}} & (\Mod\Td )/\Loc{\cB}=:\bar{\cA}_{\cB}}
	\]
	called the `homological residue field' functor at~$\cB$; it is composed of the restricted Yoneda functor~$\homol$ followed by the Gabriel quotient~$Q_{\cB}$ of the big module category~$\Mod\Td$ by the localizing ideal $\Loc{\cB}=\Loco{\cB}$ generated by~$\cB$.
\end{Rec}

\begin{Def}\label{Def:hcomp}%
	The homological spectrum comes with a surjective comparison map
	\[
		\hcomp\colon \Spch(\Td)\onto \SpcT
	\]
	defined by~$\hcomp(\cB)\coloneqq\homol\inv(\cB)=\Ker\big((\homol_{\cB})|_{\Td}\big)$. This map~$\hcomp$ is a bijection in all known examples. When $\pi$ is bijective, we say that $\cat T$ satisfies the \emph{steel condition}.
\end{Def}

\begin{Def}
\label{Def:Supp}%
	For every `big' object~$t\in\cT$, one defines $\Supph(t)$ in~$\SpchT$ as $\SET{\cB\in\SpchT}{[t,E_{\cB}]\neq0}$ where~$E_\cB$ is the pure-injective object in $\cat T$ corresponding to the injective hull of~$\unit$ in the residue category~$\bar{\cA}_{\cB}$. Conveniently, for a weak ring $A$, this definition simplifies into the following conceptually clearer formula:
	\begin{equation}\label{eq:supph-wring-2}
	\begin{array}{rl}
		\Supph(A)\kern-.7em&=\SET{\cB\in\Spch(\Td)}{\homol_{\cB}(A)\neq 0}.
	\end{array}
	\end{equation}
	See~\cite[Theorem~4.7]{Balmer20_bigsupport}. One can then define the `big' support of an object $t\in\cT$ in the ordinary spectrum~$\SpcT$ by projecting $\Supph(t)$ along~$\hcomp$:
	\begin{equation}\label{eq:Supp(t)}
		\Supp(t) \coloneqq \pi(\Supph(t)) \subseteq \SpcT.
	\end{equation}
	For a weak ring~$A$ this reads
	\begin{equation*}
		\Supp(A) =\SET{\cP\in\SpcT}{\exists\,\cB\in \hcomp\inv(\{\cP\})\textrm{ such that }\homol_{\cB}(A)\neq 0}.
	\end{equation*}
	If~$\cT$ satisfies the steel condition,~$\hcomp\colon\SpchT\isoto\SpcT$, then the above quantifiers simplify: The weak ring $A$ is supported at~$\cP$ if and only if~$\homol_{\cP}(A)$ is non-zero, where $\homol_{\cP}=\homol_{\pi\inv(\cP)}$ is the \emph{unique} homological residue field functor at the point~$\cP$.
\end{Def}

\begin{Rem}
	When $\Spc(\Td)$ is weakly noetherian, the above support $\Supp(A)$ agrees with the Balmer--Favi support if $A$ is a weak ring; see~\cite[Proposition~5.11]{BarthelHeardSandersZou24pp}. This agreement can fail for general objects; see~\cite[Example~5.5]{BarthelHeardSanders23b}.
\end{Rem}

\begin{Rem}\label{rem:detection-for-wring}
	If $A$ is a weak ring then $\Supp(A) = \emptyset$ implies $A=0$ by~\cite[Theorem 1.8]{Balmer20_bigsupport}. Again, this `detection property' need not hold for general objects.
\end{Rem}

\begin{Rem}\label{rem:unital-map}
	If $A \to A'$ is a unital map of weak rings then $\homol_{\cB}(A)\to \homol_{\cB}(A')$ remains a unital map in~$\bar{\cA}_{\cB}$, for every homological prime~$\cB$. We deduce from~\eqref{eq:supph-wring-2} that $\Supph(A) \supseteq \Supph(A')$ and hence $\Supp(A) \supseteq \Supp(A')$, too.
\end{Rem}

\begin{Rem}
	Let $f^*:\cat T \to \cat S$ be a geometric functor and let
	$f \coloneqq \Spc(f^*):\Spc(\Sd)\to\Spc(\Td)$
	and
	$\fhomol \coloneqq \Spch(f^*):\Spch(\Sd)\to\Spch(\Td)$
	be the induced maps. These fit into a commutative diagram
	\begin{equation}\label{eq:pi-natural}
		\begin{tikzcd}
			\Spch(\Sd) \ar[d,two heads, "\hcomp"']\ar[r,"\fhomol"] & \Spch(\Td)\ar[d,two heads,"\hcomp"] \\
			\Spc(\Sd) \ar[r,"f"] & \Spc(\Td).
	\end{tikzcd}
	\end{equation}
	The following base-change formulas for support are established in~\cite{BarthelHeardSandersZou24pp}.
\end{Rem}

\begin{Prop}\label{Prop:Supp-base-change}%
	With the above notation, the following statements hold:
	\begin{enumerate}[\rm(a)]
		\item\label{it:Supp-a} For any weak ring $A$ in $\cat T$, we have $\Supph(f^*(A))=(\fhomol)^{-1}(\Supph(A))$.
			If~$\cat T$ satisfies the steel condition (\Cref{Def:hcomp}) then $\Supp(f^*(A))=f^{-1}(\Supp(A))$.
\smallbreak
		\item\label{it:Supp-b} For any weak ring $B$ in $\cat S$, we have $\Supph(f_*(B)) = \fhomol(\Supph(B))$.
			Hence $\Supp(f_*(B)) = f(\Supp(B))$.
	\end{enumerate}
\end{Prop}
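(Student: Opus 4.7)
My plan is to first reduce the two statements about the ordinary support $\Supp$ to the corresponding homological ones, and then to analyze how the homological residue field functors interact with $f^*$ and $f_*$.

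For the reduction, recall that $\Supp=\hcomp\circ\Supph$ by~\eqref{eq:Supp(t)} and that the square~\eqref{eq:pi-natural} gives $f\circ\hcomp=\hcomp\circ\fhomol$. Part~(b) then follows immediately by applying $\hcomp$ to the identity $\Supph(f_*(B))=\fhomol(\Supph(B))$:
\[
	\Supp(f_*(B)) = \hcomp(\fhomol(\Supph(B))) = f(\hcomp(\Supph(B))) = f(\Supp(B)).
\]
For part~(a), the steel condition on~$\cT$ makes $\hcomp\colon\Spch(\Td)\to\SpcT$ a bijection. Combined with the surjectivity of $\hcomp\colon\Spch(\Sd)\to\Spc(\Sd)$, a small diagram chase in~\eqref{eq:pi-natural} yields the set-theoretic identity $\hcomp((\fhomol)^{-1}(S))=f^{-1}(\hcomp(S))$ for every subset $S\subseteq\Spch(\Td)$, and applying this to $S=\Supph(A)$ gives $\Supp(f^*(A))=f^{-1}(\Supp(A))$.

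It remains to prove the two homological identities. Using the weak ring characterization~\eqref{eq:supph-wring-2}, one reduces part~(a) to the assertion that for every $\cB\in\Spch(\Sd)$, $\homol_{\cB}(f^*(A))\neq 0$ if and only if $\homol_{\fhomol(\cB)}(A)\neq 0$. The idea is that the module-level pullback $\tilde{f}^*\colon\Mod\Td\to\Mod\Sd$ is an exact, coproduct-preserving tensor functor that intertwines the Yoneda embeddings (\Cref{Rec:mod}) and by definition carries $\fhomol(\cB)=(\tilde{f}^*)^{-1}(\cB)\cap\mod\Td$ into~$\cB$. It therefore descends to an exact tensor functor $\bar{\cA}_{\fhomol(\cB)}\to\bar{\cA}_{\cB}$ compatible with the residue field constructions, and identifying this descended functor as faithful on the relevant injective cogenerators yields the equivalence.

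Part~(b) is the more delicate direction and I expect this to be the main obstacle. One needs to establish $\Supph(f_*(B))=\fhomol(\Supph(B))$. My approach would use the internal-hom identity $[f_*(B),E_{\cB}]\cong f_*[B,f^!(E_{\cB})]$, which follows from the adjunction $f_*\adj f^!$ combined with the projection formula $X\otimes f_*(B)\cong f_*(f^*(X)\otimes B)$. Both containments then hinge on describing the pullback $f^!(E_{\cB})$ (or equivalently $f^*(E_{\cB})$) in terms of the residue objects $E_{\cB'}$ for $\cB'$ ranging over the fiber $(\fhomol)^{-1}(\cB)$; once this is done, a direct computation using conservativity of $f_*$ on pure-injective objects yields both inclusions. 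This careful analysis of how pure-injective residue objects transform under pullback is the technical heart of the argument and is precisely what is carried out in~\cite{BarthelHeardSandersZou24pp}.
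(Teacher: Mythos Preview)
The paper does not prove this proposition at all: it simply records that the result is established in~\cite{BarthelHeardSandersZou24pp} and moves on. So there is no in-paper argument to compare against, and your proposal is already more detailed than what the paper offers.

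Your reduction from $\Supp$ to $\Supph$ via~\eqref{eq:Supp(t)} and~\eqref{eq:pi-natural} is correct and is exactly the kind of elementary bookkeeping one would expect; the diagram chase for part~(a) under the steel condition is fine.

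For the homological statement in~(a), however, your sketch has a genuine gap. You correctly observe that $\tilde{f}^*$ descends to an exact tensor functor $\bar{\cA}_{\fhomol(\cB)}\to\bar{\cA}_{\cB}$, which gives one implication (if $\homol_{\fhomol(\cB)}(A)=0$ then $\homol_{\cB}(f^*A)=0$). But the converse requires knowing that this descended functor is \emph{conservative} on the image of~$\cT$, and your phrase ``identifying this descended functor as faithful on the relevant injective cogenerators'' does not supply an argument for this. The actual proof in~\cite{BarthelHeardSandersZou24pp} handles this by a different mechanism, and it is not a triviality.

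For~(b) you are honest that the analysis of how $f^!(E_{\cB})$ decomposes over the fiber $(\fhomol)^{-1}(\cB)$ is the technical heart, and you correctly defer to the reference---which is exactly what the paper itself does for the entire proposition.
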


\begin{Rem}
	In particular, taking $B=\unit_{\cat S}$, we have the formula
	\begin{equation*}
		\Img(f) = \Supp(f_*(\unit_{\cat S}))
	\end{equation*}
	for the image of the map $f:\SpcS\to\SpcT$ induced by a geometric functor $f^*:\cat T \to \cat S$. This formula, first established in~\cite[Corollary~5.13]{Balmer20_bigsupport}, relies on the `big' categories $\cat T$ and $\cat S$. We want to extend this result to the setting where we are only given a tt-functor on the small objects. The resulting \cref{Thm:Img=Supp(A)} may be useful in other contexts.
\end{Rem}

\begin{Rec}
\label{Rec:brothers}%
	To prove the theorem, we first recall two twin lemmas concerning the existence of primes for an essentially small tt-category $\cat K$:
\begin{enumerate}[\rm(a)]
	\item\label{it:brother-1} The first one~\cite[Lemma~2.2]{Balmer05a} is purely tt-categorical. For every tt-ideal~$\cJ\subset\cK$ and every $\otimes$-multiplicative class of objects~$S\subset\cK$ such that $\cJ\cap S=\varnothing$ there exists a triangular prime~$\cP\in\Spc(\cK)$ such that $\cJ\subseteq\cP$ and~$\cP\cap S=\varnothing$.
	\item\label{it:brother-2} The second one~\cite[Lemma~3.8]{Balmer20_nilpotence} assumes that $\cK$ is rigid and involves the abelian tensor-category~$\mod\cK$. For every Serre $\otimes$-ideal~$\cat{I}\subset\mod\cK$ and every $\otimes$-multiplicative class of objects~$S\subset\cK$ such that $\cat{I}\cap\homol(S)=\varnothing$ there exists a homological prime~$\cB\in\Spch(\cK)$ such that $\cat{I}\subseteq\cB$ and~$\cB\cap \homol(S)=\varnothing$.
\end{enumerate}
\end{Rec}

\begin{Thm}
\label{Thm:Img=Supp(A)}%
	Let $\cT$ be a rigidly-compactly generated tt-category (\Cref{Rec:big}) and let $f^*\colon \Td\to \cL$ be a tt-functor to an essentially small tt-category~$\cL$. Let~$A\in \cT$ be a weak ring, not necessarily compact.
	\begin{enumerate}[\rm(a)]
	\item
	\label{it:Img=Supp(A)-1}%
		Suppose that every morphism $\alpha$ in~$\Td$ such that $f^*(\alpha)=0$ satisfies $A\otimes\alpha=0$. Then we have $\Supp(A)\subseteq\Img(\Spc(f^*))$.
	\smallbreak
	\item
	\label{it:Img=Supp(A)-2}%
		Suppose that every morphism $\alpha$ in~$\Td$ such that $A\otimes \alpha=0$ satisfies $f^*(\alpha)=0$. Then we have $\Img(\Spc(f^*))\subseteq\Supp(A)$.
	\end{enumerate}
\end{Thm}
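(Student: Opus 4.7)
I would lift everything to the homological spectrum and apply the twin lemmas of \cref{Rec:brothers}, using that the morphism-level hypotheses translate, via the Yoneda embedding $\homol\colon\Td\to\mod\Td$, into conditions on Serre $\otimes$-ideals. Since $\homol(A\otimes\alpha)=\homol(A)\otimes\homol(\alpha)$ and $\tilde f^*(\homol(\alpha))=\homol(f^*(\alpha))$, the hypothesis in~\eqref{it:Img=Supp(A)-1} reads: if $\tilde f^*(\homol(\alpha))=0$ in $\mod\cL$, then $\homol(A)\otimes\homol(\alpha)=0$ in $\Mod\Td$; the hypothesis in~\eqref{it:Img=Supp(A)-2} is the converse implication.

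For~\eqref{it:Img=Supp(A)-1}, fix $\cP\in\Supp(A)$ and choose, by~\cref{Def:Supp}, a homological prime $\cB\in\Spch(\Td)$ over~$\cP$ with $\homol_\cB(A)\neq0$. It suffices to produce $\cB_\cL\in\Spch(\cL)$ with $\fhomol(\cB_\cL)=\cB$, since projecting through the commuting square~\eqref{eq:pi-natural} then yields $\cP\in\Img(\Spc(f^*))$. I apply~\cref{Rec:brothers}\eqref{it:brother-2} to the essentially small rigid tt-category~$\cL$, with Serre $\otimes$-ideal $\cI:=\langle\tilde f^*(\cB)\rangle\subseteq\mod\cL$ and multiplicative subset $S:=f^*(\Td\smallsetminus\cP)\subseteq\cL$: granted the disjointness $\cI\cap\homol(S)=\varnothing$, the lemma produces $\cB_\cL\supseteq\cI$ disjoint from $\homol(S)$. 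Then $(\tilde f^*)^{-1}(\cB_\cL)\subseteq\mod\Td$ contains~$\cB$ and is proper (since $\unit\notin\cB_\cL$), so maximality of~$\cB$ forces $(\tilde f^*)^{-1}(\cB_\cL)=\cB$, i.e., $\fhomol(\cB_\cL)=\cB$.

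The crux is the disjointness $\cI\cap\homol(S)=\varnothing$. By contradiction, if $\tilde f^*(\homol(c))\in\cI$ for some $c\in\Td\smallsetminus\cP$, then the Serre $\otimes$-ideal $(\tilde f^*)^{-1}(\cI)\subseteq\mod\Td$ strictly contains~$\cB$, hence equals $\mod\Td$ by maximality, forcing $\unit\in\cI$ and so $\cI=\mod\cL$. To rule this out, I use the hypothesis in~\eqref{it:Img=Supp(A)-1}: since $\homol(A)\otimes(-)$ annihilates every Yoneda-type morphism killed by~$\tilde f^*$, a finite Serre--$\otimes$ construction of $\unit$ from $\tilde f^*(\cB)$ in $\mod\cL$ propagates, by induction on its complexity, to a construction in $\Mod\Td$ placing $\homol(A)\in\Loc{\cB}$, which forces $\homol_\cB(A)=0$, contradicting $\homol_\cB(A)\neq0$.

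Part~\eqref{it:Img=Supp(A)-2} follows by the dual strategy. Given $\cP\in\Img(\Spc(f^*))$, lift~$\cP$ to a prime $\cQ\in\Spc(\cL)$, then to a homological prime $\cB_\cL\in\Spch(\cL)$ over~$\cQ$, and set $\cB:=\fhomol(\cB_\cL)\in\Spch(\Td)$, which lies over~$\cP$ by~\eqref{eq:pi-natural}. If $\homol_\cB(A)=0$, i.e.,~$\homol(A)\in\Loc{\cB}$, then the dual propagation---using the hypothesis in~\eqref{it:Img=Supp(A)-2}---should force $\unit\in\cB_\cL$, contradicting that $\cB_\cL$ is proper. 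The \emph{main obstacle} in both parts is making this propagation step rigorous: the hypothesis controls only morphisms of Yoneda type, so the induction on the Serre--$\otimes$ construction (subobjects, quotients, extensions, and tensor with arbitrary modules) must carefully combine this control with the exactness and $\otimes$-compatibility of $\tilde f^*$ and of $\homol(A)\otimes(-)$.
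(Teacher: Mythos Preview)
Your plan has a genuine gap at precisely the point you flag as the ``main obstacle'': the propagation step. The hypothesis only controls morphisms $\alpha$ in~$\Td$ (equivalently, Yoneda-type maps in $\mod\Td$), whereas a Serre--$\otimes$ construction of $\unit$ from $\tilde f^*(\cB)$ in $\mod\cL$ involves arbitrary subquotients, extensions, and tensors with objects of $\mod\cL$ that need not be in the image of~$\tilde f^*$. There is no evident induction that transports such a construction back to $\Mod\Td$, and your sketch does not supply one. A second issue: you invoke \cref{Rec:brothers}\eqref{it:brother-2} for~$\cL$, but that lemma requires rigidity, which is \emph{not} assumed of~$\cL$ in the theorem.

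The paper sidesteps both problems. For~\eqref{it:Img=Supp(A)-1} it stays at the \emph{triangular} level in~$\cL$ and uses \cref{Rec:brothers}\eqref{it:brother-1} (no rigidity needed): one shows that $\langle f^*(\cP)\rangle_\cL$ is disjoint from $f^*(\Td\smallsetminus\cP)$. If not, there exist $c\in\cP$ and $d\notin\cP$ with $f^*(d)\in\langle f^*(c)\rangle$, which forces $f^*(\xi_c^{\otimes n}\otimes d)=0$ for some~$n$ (where $\xi_c$ is the fiber of coevaluation). The hypothesis gives $A\otimes\xi_c^{\otimes n}\otimes d=0$; applying $\homol_\cB$ and using that $\homol_\cB(\xi_c)$ is invertible (since $c\in\cP=\ker\homol_\cB$) yields $\bar A\otimes\homol_\cB(d)=0$, hence $\homol_\cB(d)=0$ via $\bar\unit\hookrightarrow\bar A$, contradicting $d\notin\cP$. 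For~\eqref{it:Img=Supp(A)-2} the paper applies \cref{Rec:brothers}\eqref{it:brother-2} in~$\Td$ (which \emph{is} rigid), not in~$\cL$: after reducing to $\cL$ local with $\cP=\Ker(f^*)$, take $\cI=\Ker(\tilde f^*)\subset\mod\Td$ and $S=\Td\smallsetminus\cP$ to produce $\cB$ directly; then show $\homol_\cB(A)\neq0$ by proving that every finitely presented subobject of $\Ker(\unit\to\homol(A))$ is the image of some $\homol(\alpha)$ with $A\otimes\alpha=0$, hence $f^*(\alpha)=0$, hence lies in $\cI\subseteq\cB$. The key idea you are missing is to translate the tt-ideal membership into the vanishing of a \emph{single explicit morphism} ($\xi_c^{\otimes n}\otimes d$) rather than chasing an abstract Serre construction.
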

\begin{proof}
	For~\eqref{it:Img=Supp(A)-1}, let $\cP\in\Supp(A)$, meaning that $\cP=\hcomp(\cB)$ for some maximal Serre $\otimes$-ideal~$\cB\subset\mod\Td$ such that $\bar{A}:=\homol_{\cB}(A)$ is non-zero in the residue category~$\bar{\cA}_{\cB}$. This implies that the unit $\bar{\unit}\into \bar{A}$ is a monomorphism by~\cite[Proposition~3.5]{Balmer20_bigsupport}. We need to show that $\cP\in\Img(\Spc(f^*))$ which amounts to proving that the tt-ideal $\ideal{f^*(\cP)}_{\cL}$ does not meet the $\otimes$-multiplicative class~$f^*(\Td\sminus\cP)$ in~$\cL$. (See \Cref{Rec:brothers}\,(a) for~$\cL$. Any prime~$\cQ\in\Spc(\cL)$ such that $\ideal{f^*(\cP)}_{\cL}\subseteq\cQ$ and $\cQ\cap f^*(\cK\sminus\cP)=\varnothing$ satisfies~$(f^*)\inv(\cQ)=\cP$.) If \textsl{ab absurdo} there was some $c\in\cP$ and some $d\in\Td\sminus\cP$ such that $f^*(d)\in\ideal{f^*(c)}_{\cL}$ then, by the usual arguments of~\cite{Balmer10b}, we would have $f^*(\xi_c\potimes{n}\otimes d)=0$ for some~$n\gg1$, where the map $\xi_c\colon b\to \unit$ is the homotopy fiber of~$\mathrm{coev}\colon\unit\to c\otimes c^\vee$ in~$\Td$. By hypothesis this implies that $A\otimes\xi_c\potimes{n}\otimes d=0$. Applying the tensor-functor~$\homol_{\cB}\colon\cT\to \bar{\cA}_{\cB}$ to this relation we see that $\homol_{\cB}(\xi_c)\potimes{n}$ is zero on the object~$\bar{A}\otimes\homol_{\cB}(d)$. However $\homol_{\cB}(c)=0$ since $c\in \cP=\hcomp(\cB)=\ker(\homol_{\cB})$ and therefore $\homol_{\cB}(\xi_c)$ is an isomorphism. We conclude that the object $\bar{A}\otimes\homol_{\cB}(d)$ must be zero, and therefore $\homol_{\cB}(d)=0$ since $\bar{\unit}\into\bar{A}$ is a monomorphism and~$\homol_{\cB}(d)$ is~$\otimes$-flat. So $d\in\ker(\homol_{\cB})=\cP$ which is a contradiction.

	For~\eqref{it:Img=Supp(A)-2}, let $\cP=(f^*)\inv(\cQ)$ for $\cQ\in\Spc(\cL)$. Consider the functor $g^*\colon \Td\to \cL\onto \cL/\cQ$. Note that $A\otimes\alpha=0$ also implies~$g^*(\alpha)=0$ and since $\cP=(g^*)\inv(0)$ we can replace~$f^*$ by~$g^*$ and assume for simplicity that $\cL$ is local ($0$ is prime) and that $\cP=\Ker(f^*)$. Consider now the commutative diagram (\Cref{Rec:mod})
	\begin{equation}
	\label{eq:aux-f^**}%
	\vcenter{\xymatrix@C=3em{
	\Td \ar[r]^-{f^*} \ar[d]_-{\homol}
	& \cL \ar[d]^-{\homol}
	\\
	\mod\Td \ar[r]^-{\tilde{f}^*}
	& \mod\cL.
	}}
	\end{equation}
	Let $\cat{I}\subset\mod\Td$ be the kernel of~$\tilde{f}^*$, that is, $\cat{I}=\SET{M\in\mod\Td}{\tilde{f}^*(M)=0}$ and let~$S\subset\Td$ be the $\otimes$-multiplicative complement of~$\cP$. By the above discussion, we have $\cat{I}\cap \homol(S)=\varnothing$. Hence (\Cref{Rec:brothers}\,(b)) there exists~$\cB\in\Spch(\Td)$ such that $\cat{I}\subseteq\cB$ and~$\cB\cap \homol(S)=\varnothing$. It follows from these properties that $\hcomp(\cB)=\homol\inv(\cB)=\cP$. So we get the result if we prove that $\homol_{\cB}(A)$ is non-zero in the residue category~$\bar{\cA}_{\cB}$, for then~$\cB$ provides the homological prime above~$\cP$ that detects~$A$. For this, let $I\in\Mod\Td$ be the kernel of~$\unit\to \homol(A)$ in~$\Mod\Td$. Since $I$ is a subobject of the finitely presented~$\unit$, we must have $I=\cup_{M\subseteq I}M$ where $M$ runs over the finitely presented subobjects of~$I$ (see~\cite[Lemma~3.9]{BalmerKrauseStevenson20} if necessary). Any such $M$ is the image of some~$\homol(\alpha)$ for some map~$\alpha\colon a\to \unit$ in~$\Td$. It follows that $A\otimes\alpha=0$ in~$\cT$ by using that this holds under restricted Yoneda and that $A$ is a weak ring. By hypothesis, this forces $f^*(\alpha)=0$. Therefore $\tilde{f}^*(M)=0$ in~$\mod\cL$ by exactness of~$\tilde{f}^*$ and commutativity of~\eqref{eq:aux-f^**}. Hence $M\in\cB$ and therefore $I=\cup_M M\in\Loc{\cB}$. It follows that $I\mapsto 0$ in~$\bar{\cA}_{\cB}$ and therefore $\bar{\unit}\into\homol_{\cB}(A)$ is a monomorphism.
\end{proof}

\section{Localization, completion and the Tate construction}
\label{sec:recollement}%

Let us remind the reader of finite recollements; see~\cite{Greenlees01}.

\begin{Hyp}
\label{Hyp}%
	Let $\cT$ be a `big' tt-category (\Cref{Rec:big}). Choose a Thomason subset~$Y$ of its spectrum and denote the complement of~$Y$ by~$U$:
	\begin{equation}
	\label{eq:YU}%
	Y\subseteq\SpcT
	\qquadtext{and}
	U\coloneqq Y^\complement=\SpcT\sminus Y.
	\end{equation}
	The choice of~$Y$ is the only parameter in our entire discussion. It amounts to choosing the corresponding tt-ideal~$\TYc\coloneqq \SET{ c \in \Tc=\Td}{\supp(c)\subseteq Y}$ of small objects supported on~$Y$. (We explain in \Cref{Rem:TYc} why we write $\TYc$ instead of~$\Td_Y$.) The latter generates the localizing ideal $\TY\coloneqq\Loc{\TYc}=\Loco{\TYc}$ of~$\cT$. Brown--Neeman Representability provides two semi-orthogonal decompositions\,(\footnote{\,Recall that $\cT=\langle\cA\perp\cB\rangle$, for two triangulated subcategories~$\cA,\cB\subseteq\cT$, means that every object $t \in \cT$ fits in an exact triangle~$a\to t\to b\to \Sigma a$ with $a\in\cA$ and~$b\in\cB$ and that there is no non-zero morphism in~$\cT$ from objects of~$\cA$ to those of~$\cB$. In that case, the inclusion $\cA\into \cT$ admits a right adjoint identifying~$\cA$ with~$\cT/\cB$ and $\cB\into \cT$ admits a left adjoint identifying~$\cB$ with~$\cT/\cA$.})
	\begin{equation}
	\label{eq:decompositions}%
	\cT=\langle \TY\perp \TYp\rangle
	\qquadtext{and}
	\cT=\langle \TYp\perp \TYpp\rangle.
	\end{equation}
	In the literature, $\TY$ is sometimes called the subcategory of~\emph{$Y$-torsion objects}; objects of its right-orthogonal $\TYp\coloneqq\SET{t\in\cT}{\Homcat{T}(s,t)=0\textrm{ for all }s\in\TY}=(\TYc)^\perp$ are called \emph{local}; and finally the double orthogonal $\TYpp=(\TYp)^\perp$ is usually called the subcategory of \emph{$Y$-complete} objects. When the tt-ideal $\TYc$ is given a name, say~$\cJ$, some authors speak of `$\cJ$-torsion', `$\cJ$-local' and~`$\cJ$-complete'.

	Since the local~$\TYp$ appears in both decompositions~\eqref{eq:decompositions}, the quotient~$\cT/\TYp$ has two equivalent realizations, namely the $Y$-torsion and the $Y$-complete subcategories. Hence they are equivalent via the composite $\TY \into \cT\onto \TYpp$ of the inclusion of~$\TY$ and the left adjoint to the inclusion of~$\TYpp$. We have two \emph{equivalent} recollements
	\begin{equation}
	\label{eq:recollements}%
	\vcenter{\xymatrix@C=4em{
	\TY \ar@{ >->}@<-1.5em>[d]_-{\mathbf{incl}} \ar@{ >->}@<1.5em>[d]^-{[\eY,-]}
	& \cong
	& \TYpp \ar@{ >->}@<-1.5em>[d]_-{\eY\otimes-} \ar@{ >->}@<1.5em>[d]^-{\mathbf{incl}}
	\\
	\cT \ar@{->>}[u]|-{\eY\otimes-\vbuff} \ar@{->>}@<-1.5em>[d]_-{\fY\otimes-} \ar@{->>}@<1.5em>[d]^-{[\fY,-]}
	& =
	& \cT \ar@{->>}[u]|-{[\eY,-]} \ar@{->>}@<-1.5em>[d]_-{\fY\otimes-} \ar@{->>}@<1.5em>[d]^-{[\fY,-]}
	\\
	\TYp \ar@{ >->}[u]|-{\mathbf{incl}\vbuff}
	& =
	& \TYp \ar@{ >->}[u]|-{\mathbf{incl}\vbuff}
	}}
	\end{equation}
	with identical `bottom' part. We highlight the inclusions~`$\incl$' of subcategories. The other functors are uniquely defined as adjoints. We indicate in \eqref{eq:recollements} how to compute those adjoints via the tensor structure. (It follows that the equivalence $\TY\isoto \TYpp$ is given by~$[\eY,-]$ with inverse~$\eY\otimes-$.) This involves the exact triangle
	\begin{equation}
	\label{eq:idemp-triangle}%
	\eY \to \unit\to \fY\to \Sigma\eY
	\end{equation}
	in~$\cT$ that is uniquely characterized by the properties $\eY\in \TY$ and $\fY\in \TYp$; this triangle comes from the semi-orthogonal decomposition $\cT=\langle\TY\perp\TYp\rangle$ applied to the object~$t=\unit$. See details in~\cite{BalmerFavi11} where~\eqref{eq:idemp-triangle} is called an \emph{idempotent triangle} since~$\eY\potimes{2}\cong\eY$ and~$\fY\potimes{2}\cong\fY$. For an object~$t\in\cT$ its triangles with respect to the decompositions~\eqref{eq:decompositions} can be extracted from~\eqref{eq:idemp-triangle} by applying~$-\otimes t$ and~$[-,t]$:
	\begin{align}
	\label{eq:triangle-t-loc}%
	\textrm{For }\cT & =\langle \TY\perp \TYp\rangle:
	\qquad
	\kern.3em
	\eY\otimes t\to t \to \fY\otimes t \to \Sigma \eY\otimes t
	\kern.9em
	\\
	\label{eq:triangle-t-coloc}%
	\textrm{For }\cT & =\langle \TYp\perp \TYpp\rangle:
	\qquad
	[\fY,t]\to t \to [\eY,t] \to \Sigma [\fY,t].
	\end{align}
	The idempotent endofunctors~$\eY\otimes-,\ \fY\otimes-,\ [\eY,-],\ [\fY,-]\colon \cT\to \cT$ satisfy:
	\begin{align}
	\label{eq:TY}%
	\TY & =\Img(\eY\otimes-)=\Ker(\fY\otimes-)
	\\
	\label{eq:TYp}%
	\TYp & =\Img(\fY\otimes-)=\Img([\fY,-])=\Ker(\eY\otimes-)=\Ker([\eY,-])
	\\
	\label{eq:TYpp}%
	\TYpp & =\Img([\eY,-])=\Ker([\fY,-]).
	\end{align}
\end{Hyp}

\begin{Rem}
\label{Rem:TU}%
	By the Neeman--Thomason Theorem, the localization $\fY\otimes-\colon\cT\onto\TYp$ is a geometric functor whose image $\TYp\cong\cT/\TY$ is a genuine `big' tt-category in the sense of \Cref{sec:basics}. In tt-geometry, it is usually denoted
	\begin{equation}
		\TU:=\cT/\TY\cong\TYp.
	\end{equation}
	Its subcategory of small objects is given by~$\TUd=(\Td/\TYc)^\natural$, the idempotent-completion of the corresponding quotient of small objects. See~\cite{Neeman92b}. It follows that its spectrum is just~$U$, embedded in~$\SpcT$ via the injective map induced by localization: $\Spc(\TUd)\cong U$. We think of~$\TU$ as the restriction of~$\cT$ to~$U$. For instance, in algebraic geometry, if~$\cT$ is the derived category of a quasi-compact and quasi-separated scheme~$X$ and~$U\subseteq X\cong\SpcT$ is a quasi-compact open subset then~$\TU$ recovers the derived category of~$U$ and~$\cT\onto \TU$ is the usual restriction to~$U$. In view of this, the objects of~$\TYp$ should be called \emph{$U$-local} (not $Y$-local).
\end{Rem}

The story is more complicated with the $Y$-torsion and \mbox{$Y$-complete} subcategories.
\begin{Rem}
\label{Rem:TY-TYpp}%
	At first sight, both $\TY$ and~$\TYpp$ have many desirable properties. They are tensor-triangulated categories with coproducts. The tensor in~$\TY$ is the one in~$\cT$ but with unit~$\eY$; the tensor in~$\TYpp$ is $s\hat\otimes t\coloneqq[\eY,s\otimes t]$, the tensor in~$\cT$ followed by~$[\eY,-]$; it has unit~$[\eY,\unit]\cong[\eY,\eY]$. The upward functors in~\eqref{eq:recollements} that go from~$\cT$ to these categories~$\TY$ and~$\TYpp$ are Bousfield localizations with respect to tensor-ideals. These localizations preserve the tensor (hence dualizable objects) and they have adjoints on both sides. Furthermore, the categories $\TY$ and~$\TYpp$ are compactly generated by dualizable objects, namely by $\TYc$ which is the subcategory of compact objects in both of them:
	\begin{equation}
	\label{eq:TYc}%
		(\TY)\cc=(\TYpp)\cc=\TYc.
	\end{equation}
	For~$\TY=\Loc{\TYc}$, this follows from~\cite[Lemma~2.2]{Neeman92b}: the compact objects in a compactly generated subcategory are given by the thick closure of the generators. Transporting this under the equivalence~$[\eY,-]\colon \TY\smash{\isoto} \TYpp$ and using that $[\eY,c]\cong [\eY\otimes c^\vee,\unit]\cong[c^\vee,\unit]\cong [\unit,c]\cong c$ when $c\in\TYc$ is dualizable and supported on~$Y$, we get $(\TYpp)\cc=\TYc$ again.
\end{Rem}

\begin{Rem}
\label{Rem:split}%
	Unfortunately, neither $\TY$ nor~$\TYpp$ is rigidly-compactly generated in general. Indeed, their tensor-unit is only compact in the split case where~$\cT\cong\cT_1\times\cT_2$ and $Y=\Spc(\cT_1\dd)$ and $U=\Spc(\cT_2\dd)$. See~\cite[Remark~2.12]{BalmerSanders24pp}. Thus the localizations $\cT\onto \TY$ and~$\cT\onto \TYpp$ are \emph{not} geometric functors of `big' tt-categories in the sense of \Cref{Rec:geometric} except in the split case.
\end{Rem}

\begin{Rem}
\label{Rem:comp-def}%
	Despite its snazzy name, the $Y$-complete subcategory $\TYpp$ is not any better than the `naive' $Y$-torsion subcategory~$\TY$. There is absolutely no more mathematical information in the right-hand side of~\eqref{eq:recollements} than in its tt-equivalent left-hand side.

	We must nevertheless acknowledge the weight of tradition and the vast literature that presents $\TYpp$ as \emph{the} completion of~$\cT$ along~$Y$. Mathematicians like to compose functors with their right adjoint, as in the case of extension-of-scalars $B\otimes_A-\colon A\textrm{-Mod}\to B\textrm{-Mod}$: We like to think of~$B$ as an $A$-module and we like to think of~$B\otimes_A-$ as a monad on $A\textrm{-Mod}$. If we compose the localization $\cT\onto\TYpp$ (or $\cT\onto\TY$) in~\eqref{eq:recollements} with its right adjoint we get (in \emph{both} cases!) the functor
	\begin{equation}
		[\eY,-]\colon \cT\to \cT.
	\end{equation}
	This is just a Bousfield localization. This functor \emph{does} have a `completion' vibe to~it, as we discuss in~\cite[Remark~2.6]{BalmerSanders24pp}. For instance, the following holds true.

	In the case of the derived category $\cT=\Der(R)$ of a commutative noetherian ring~$R$, the above functor $[\eY,-]$ almost agrees with $\hat{R}_I\otimes-\colon \Der(R)\to \Der(R)$ where~$\hat{R}_I$ is the $I$-adic completion and $Y=V(I)$. The agreement $[\eY,c]\cong\hat{R}_I\otimes c$ holds on \emph{perfect} complexes~$c$. However it fails for `big' objects, like the object~$\hat{R}_I$ itself, since $[\eY,\hat{R}_I]$ is~$\hat{R}_I$ again whereas $\hat{R}_I\otimes\hat{R}_I$ is not. The (Bousfield) completion~$\TYpp$ cannot recover~$\Der(\hat{R}_I)$, since the latter is always a legitimate `big' tt-category. Nobody expects extension-of-scalars along~$\smash{R\to \hat{R}_I}$ to be a mere localization.

	In~\cite{BalmerSanders24pp}, we prove that something about $\Der(\hat{R}_I)$ can nevertheless be read off~$\TYpp$ in the above example of~$\cT=\Der(R)$. Namely, the compact-dualizable objects~$\Dperf(\hat{R}_I)$ in~$\Der(\hat{R}_I)$ can be recovered as the \emph{dualizable} objects in~$\Der(R)_{Y}^{\perp\perp}$.

	This result suggests a better notion of completion of~$\cT$ along~$Y$ in general; namely it should be a `big' tt-category~$\hat{\cT}$ whose small objects is equal to
	\begin{equation}
	\label{eq:TYd}%
	\hTd\coloneqq(\TYpp)\dd
	\end{equation}
	the subcategory of dualizable objects in the $Y$-complete~$\TYpp$, or equivalently~(!) the dualizable objects in~$\TY$. In both cases, we mean the dualizable objects in the tt-category itself, not in the ambient category~$\cT$ under any of the embeddings. This is one possible definition of the `big'~$\hat{\cT}$ adopted by our $\infty$-friends Naumann--Pol--Ramzi~\cite{NaumannPolRamzi24}. In the presence of an $\infty$-categorical model, they construct $\hat{\cT}$  as the Ind-completion of the~$\hTd$ prescribed in~\eqref{eq:TYd}; see~\cite[Definition~5.1]{NaumannPolRamzi24}.
\end{Rem}

\begin{Rem}
\label{Rem:TYc}%
With~\eqref{eq:TYd}, we began considering dualizable objects in~$\TYpp\cong\TY$. This is the reason why ever since \Cref{Hyp} we have been systematically writing $\TYc$ instead of~$\Td_Y$ for the tt-ideal $(\Tc)_Y=(\Td)_Y$ of the compact-dualizables in~$\cT$ supported on~$Y$. We want to avoid any possible confusion with $(\TY)\dd$.
\end{Rem}

\begin{Rem}
\label{Rem:exotic}%
	We could make other choices for the completion of~$\Td$ along~$Y$ instead of the dualizables~$\hTd=(\TYpp)\dd$ in the $Y$-complete subcategory chosen in~\eqref{eq:TYd}. The smallest choice is the thick envelope $\smash{\hTd_0}=\smash{\Thick_{\TYpp}}([\eY,\Td])$ of the image of~$\Td\to \hTd$. This is the same as $\hTd$ when the latter is generated by the unit, as in the ring case discussed above, but in general we should expect `exotic' dualizable objects in~$\TYpp$, that is, outside of~$\hTd_0$. Exotic dualizables exist in homotopy theory, for instance in the $K(n)$-local category; see \cite[Section 5]{BensonIyengarKrausePevtsova23} with details in \cite[Section 15.1]{HoveyStrickland99}. In contrast, \cite{BensonIyengarKrausePevtsova24pp} establishes that there are no exotic dualizables for the stable module category. Compare also~\cite{NaumannPol24}.

	For any such choice of an intermediate tt-subcategory~$\cL$, between $\hTd_0$ and~$\hTd$, the fully faithful functors $\hTd_0\hook \cL\hook \hTd$ induce surjective maps on spectra $\Spc(\hTd) \onto \Spc(\cL)\onto \Spc(\hTd_0)$ by~\cite{Balmer18}. In particular, all those spectra have the same image in the original~$\SpcT$. Under some noetherian hypotheses, one can also prove that these maps $\Spc(\hTd) \onto \Spc(\cL)\onto \Spc(\hTd_0)$ are quotient maps with connected fibers, by upcoming work of the second author~\cite{Sanders25bpp}.
\end{Rem}

Let us summarize our discussion.
\begin{Def}
\label{Def:completion}%
	We call $\TYpp$ the \emph{Bousfield completion} of~$\cT$ along~$Y$ and call~$\hTd=(\TYpp)\dd$ as in~\eqref{eq:TYd} \emph{the completion of~$\Td$ along~$Y$}. When we need to emphasize $Y$, we shall write~$\smash{\hTuYd}$ instead of~$\smash{\hTd}$ but for most of the paper $Y$ is clear from context. We denote the \emph{completion} tt-functor $[\eY,-]\colon \cT\to \TYpp$ on dualizables by
	\begin{equation}\label{eq:completion}%
	\vcenter{\xymatrix@R=0em{
	\widehat{(-)}: \kern1em \Td \ar[r] & \kern.8em \hTd \kern.8em
	\\
	\kern4em c \kern.5em \ar@{|->}[r] & \hat{c}=[\eY,c].}}
	\end{equation}
	By extension, we also denote by~$\hat{c}=[\eY,c]$ the object~$\hat{c}$ viewed in~$\cT$. For instance, $\hat{\unit}=[\eY,\unit]$ is the completed unit. We write $\varphi_Y$ for the induced map on spectra
	\begin{equation}\label{eq:varphiY}
		\varphi_Y\colon \Spc(\hTd)\to \SpcT.
	\end{equation}
	Again, we drop the mention of~$Y$ and simply write~$\varphi$ when clear from context.
\end{Def}
\begin{Rem}
	At first, some readers might dislike the absence of~$Y$ in the above notation~$\hat{\cT}$ for completion. This simplification follows standard practice when completing a topological space: One omits the metric, unless several metrics are involved. In our case, there is another reason. We shall see in \cref{Thm:Spc(compl)-on-Y} that~$Y$ identifies with a Thomason subset of~$\Spc(\hTd)$; therefore $(\hTd)_Y$ already has a meaning --- the objects of~$\hTd$ supported on~$Y$ --- which would collide with writing the completion as ${\hat{\cT}}_{Y}^{\,\dname}$, for instance. Mathematicians are overusing indices.
\end{Rem}

We can generalize the example considered in \Cref{Rem:comp-def}.
\begin{Exa}
\label{Exa:hatR_I}%
	Let $R$ be a commutative ring, let~$I=\langle s_1,\ldots,s_r\rangle\subseteq R$ be a finitely generated ideal, and let $\hat{R}_I=\lim_n R/I^n$ be the $I$-adic completion. Suppose that $\underline{s}=(s_1,\ldots,s_r)$ is `Koszul-complete', meaning that the canonical map on Koszul complexes $\Kos_R(\underline{s})\to \Kos_{\hat{R}_I}(\underline{s})$ is a quasi-isomorphism. If $R$ is noetherian then every~$\underline{s}$ is Koszul-complete by~\cite[Proposition~3.17]{BalmerSanders24pp}. Let $\cT\coloneqq\Der(R)$ and~$Y\coloneqq V(I)\subseteq\Spec(R)\cong\SpcT$. By~\cite[Theorem~5.1]{BalmerSanders24pp}, we have $\hat\unit\cong\hat{R}_I$ and a tt-equivalence $\hTd\cong\Dperf(\hat{R}_I)$ that turns the tt-functor $\widehat{(-)}\colon \Td\to \hTd$ into extension-of-scalars~$\Dperf(R)\to \Dperf(\hat{R}_I)$. The map $\varphi_Y\colon \Spc(\hTd)\to \SpcT$ of~\eqref{eq:varphiY} identifies with the map $\Spec(\hat{R}_I) \to \Spec(R)$ induced by $I$-adic completion on ordinary Zariski spectra.
\end{Exa}

So the completion $\hTd$ is what we expect it to be in this example. In particular it is essentially small. It might not be clear \textsl{a priori} why $\hTd$ is so in general.
\begin{Prop}
\label{Prop:dualizables}%
	Let $\cS$ be a tt-category that is compactly generated as in~\cite{HoveyPalmieriStrickland97}.
\begin{enumerate}[\rm(a)]
	\item
		The tensor of a compact object with a dualizable object is compact: $\cS\dd\otimes\cS\cc\subseteq\cS\cc$.
	\item
		The subcategory of dualizable objects $\cS\dd$ is essentially small.
\end{enumerate}
\end{Prop}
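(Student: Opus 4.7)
The plan is to treat (a) with a direct adjunction computation and (b) by embedding $\cS\dd/{\cong}$ into the isomorphism classes of endofunctors of the essentially small $\cS\cc$.

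For part~(a), I would fix $c \in \cS\cc$ and $d \in \cS\dd$ and apply the $\otimes$-hom adjunction $(-)\otimes d \adj [d,-]$ together with the dualizability isomorphism $[d,-] \cong d^\vee \otimes -$ to obtain
\[
  \Homcat{S}(c\otimes d, X) \cong \Homcat{S}(c, d^\vee\otimes X)
\]
naturally in $X$. The functor $d^\vee\otimes -$ is a left adjoint (with right adjoint $[d^\vee,-]\cong d\otimes -$, by dualizability of $d^\vee$) and hence preserves coproducts, while $\Homcat{S}(c,-)$ preserves coproducts by compactness of $c$; the composite therefore preserves coproducts, giving $c\otimes d \in \cS\cc$.

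For part~(b), by~(a) the functor $d\otimes(-)$ restricts to an endofunctor $F_d\colon \cS\cc\to \cS\cc$ for every $d \in \cS\dd$. The category $\cS\cc$ is essentially small, being the thick subcategory generated by any set of compact generators (Neeman). Hence the isomorphism classes of endofunctors $\cS\cc\to \cS\cc$ form a set, and the plan is to show that $d\mapsto F_d$ is essentially injective. This will bound $\cS\dd/{\cong}$ by a set, establishing essential smallness.

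For essential injectivity, I would argue that any natural isomorphism $\alpha\colon F_d \isoto F_{d'}$ on $\cS\cc$ extends uniquely to a natural isomorphism $d\otimes(-) \isoto d'\otimes(-)$ between the coproduct-preserving endofunctors on all of $\cS$, and then evaluate at $\unit$ to deduce $d\cong d'$. The main step that requires care is this extension: a coproduct-preserving exact endofunctor of a compactly generated triangulated category is determined up to isomorphism by its restriction to the compacts. Once this standard input is in hand, the adjunction bookkeeping in~(a) and the cardinality bound in~(b) are routine.
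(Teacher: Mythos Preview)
Your argument for~(a) is correct and is exactly the paper's proof.

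For~(b), your approach is genuinely different from the paper's, and the step you flag as ``standard input'' is precisely where the argument breaks down in the bare triangulated setting. You claim that a coproduct-preserving exact endofunctor of a compactly generated triangulated category is determined up to isomorphism by its restriction to the compacts, and hence that a natural isomorphism $F_d\cong F_{d'}$ on $\cS\cc$ forces $d\cong d'$ after extending and evaluating at~$\unit$. In the presence of an $\infty$-categorical (or model-categorical) enhancement this is indeed automatic, since $\Fun^L(\cS,\cS)\simeq\Fun^{\mathrm{ex}}(\cS\cc,\cS)$. But the proposition is stated for tt-categories in the sense of~\cite{HoveyPalmieriStrickland97}, with no enhancement assumed, and in that generality the extension of a natural transformation from $\cS\cc$ to all of~$\cS$ is not a standard fact: it is obstructed by the non-functoriality of cones and the existence of phantom maps. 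Concretely, your $\alpha$ gives a natural transformation $\beta\colon \Id_{\cS\cc}\to (d^\vee\otimes d')\otimes(-)|_{\cS\cc}$, and you need an element of $\Hom_\cS(\unit,d^\vee\otimes d')$ inducing it; since $\unit$ need not be compact (this is the whole point of the proposition, as applied later to~$\TYpp$), there is no evident way to produce such a map.

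The paper avoids this entirely. It observes that $\cS\dd\otimes\cL_\alpha\subseteq\cL_\alpha$ for the essentially small $\alpha$-localizing subcategory $\cL_\alpha$ generated by~$\cS\cc$, then chooses $\alpha$ large enough that $\unit\in\cL_\alpha$, whence $\cS\dd=\cS\dd\otimes\unit\subseteq\cL_\alpha$. This is shorter, uses only Neeman's theory of well-generated categories, and works without any enhancement.
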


\begin{proof}
	For~(a), if~$d\in\cS\dd$ is dualizable and~$c\in\cS\cc$ is compact, we have $\Hom(c\otimes d,-)\cong\Hom(c,d^\vee\otimes-)$ and both $d^\vee\otimes-$ and~$\Hom(c,-)$ commute with coproducts.

	For~(b), for every infinite cardinal~$\alpha$, let us denote by~$\cL_{\alpha}$ the $\alpha$-localizing subcategory generated by the compact objects, that is, closing~$\Sc$ under triangles and coproducts of fewer than~$\alpha$ objects. It follows from~(a) and cocontinuity of the tensor that $\cS\dd\otimes\cL_{\alpha}\subseteq \cL_{\alpha}$. One can check that each $\cL_{\alpha}$ is essentially small; for instance, it is contained in the essentially small category of $\alpha$-compact objects in the sense of Neeman~\cite{Neeman01}. Since $\cS = \Loc{\Sc}$, we have $\cS=\cup_{\alpha}\cL_\alpha$ and therefore there exists~$\alpha$ large enough so that $\unit\in\cL_\alpha$. In that case, $\cS\dd=\cS\dd\otimes\unit\subseteq\cL_{\alpha}$ is essentially small.
\end{proof}
\begin{Rem}
	One can adapt the above argument to establish that $\Sd$ is essentially small for tt-categories $\cS$ that are only `well generated'.
\end{Rem}

\begin{Cor}
The category $\hTd$ of dualizable objects in~$\TYpp$ is essentially small.
\end{Cor}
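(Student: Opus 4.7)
The plan is to apply \Cref{Prop:dualizables}(b) directly to the tt-category $\cS \coloneqq \TYpp$. Since $\hTd$ is defined in~\eqref{eq:TYd} to be $(\TYpp)\dd$, it suffices to verify that $\TYpp$ fits the hypotheses of that proposition: namely, that it is a tensor-triangulated category compactly generated in the sense of~\cite{HoveyPalmieriStrickland97}.

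This verification has essentially already been done in \Cref{Rem:TY-TYpp}. Concretely, $\TYpp$ carries a symmetric monoidal structure with tensor $s\hat\otimes t = [\eY, s\otimes t]$ and unit $[\eY,\unit]$; it has arbitrary coproducts (inherited from~$\cT$ via the left adjoint $[\eY,-]\colon\cT\onto\TYpp$); and by~\eqref{eq:TYc} its subcategory of compact objects equals~$\TYc$, which is an essentially small tt-subcategory generating~$\TYpp$. Thus $\TYpp$ satisfies the hypothesis.

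Applying \Cref{Prop:dualizables}(b) to $\cS = \TYpp$ then yields that $(\TYpp)\dd = \hTd$ is essentially small, as required. There is no serious obstacle here: the corollary is just the packaging of the general statement for our specific completion. The only minor point worth emphasizing — and which was already addressed in \Cref{Rem:TY-TYpp} — is that ``dualizable'' in $(\TYpp)\dd$ refers to dualizability inside the tt-category~$\TYpp$ with respect to its modified tensor and unit, not dualizability in the ambient~$\cT$.
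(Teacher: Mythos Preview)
Your proof is correct and takes essentially the same approach as the paper: apply \Cref{Prop:dualizables}(b) to $\cS=\TYpp$, using that $\TYpp$ is compactly generated as recorded in \Cref{Rem:TY-TYpp}. The paper's proof is just the one-line version of what you wrote.
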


\begin{proof}
	The Bousfield completion $\TYpp$ is compactly generated (\cref{Rem:TY-TYpp}).
\end{proof}

Besides commutative algebra, completion also appears in representation theory.
\begin{Exa}
\label{Exa:KInj}%
	Let $k$ be a field of characteristic~$p>0$ and let~$G$ be a finite group. The tensor of $k$-linear $G$-representations is the tensor over~$k$ with diagonal \mbox{$G$-action.} Let $\cT=\KInj(kG)$ be the homotopy category of complexes of injective~$kG$-modules and consider the localization~$Q\colon\cT\onto\Der(kG)$ onto the derived category, which mods out the subcategory $\Kac\Inj(kG)$ of acyclic complexes of injectives. Write $J\colon \Kac\Inj(kG)\into \KInj(kG)$ for the inclusion. They fit into a recollement~\cite{BensonKrause08}
	\begin{equation}
	\label{eq:KInj-rec}%
	\vcenter{\xymatrix{
	\Der(kG) \ar@{ >->}@<-1em>[d]_-{Q_{\lambda}} \ar@{ >->}@<1em>[d]^-{Q_{\rho}}
	\\
	\KInj(kG) \ar@{->>}[u]|-{Q\vbuff} \ar@{->>}@<-1em>[d]_(.45){J_{\lambda}} \ar@{->>}@<1em>[d]^-{J_{\rho}}
	\\
	\Kac\Inj(kG)\ar@{ >->}[u]|(.55){J \vbuff}
	}}
	\kern5em
	\vcenter{\xymatrix{
	\Dperf(kG) \ar@{ >->}[d]
	\\
	\Db(kG) \ar@{->>}[d]
	\\
	\stab(kG)
	}}
	\end{equation}
	whose compact objects are displayed on the right-hand side. More precisely, $\Db(kG)$ means $\Db(kG\mmod)$ which embeds in~$\KInj(kG)$ via~$Q_\rho$. Using that~$Q_{\lambda}$ agrees with~$Q_{\rho}$ on perfect complexes, the compacts of~$\Kac\Inj(kG)$ identify with the stable module category $\stab(kG)=kG\mmod/kG\textrm{-proj}$. Consequently, $\Kac\Inj(kG)\cong\Stab(kG)$ is equivalent to the big stable module category.

	In this example, $\Dperf(kG)=\Td_{Y}$ where $Y=\{\ast\}$ is the closed point in the homogenous spectrum of cohomology~$\smash{\Spech(\rmH^\sbull(G,k))}\cong\SpcT$, the latter being the tt-spectrum of $\Db(kG)$. Therefore the Bousfield completion of~$\cT=\KInj(kG)$ along~$Y=\{\ast\}$ is the derived category~$\Der(kG)$. The `big' completion would recover~$\KInj(kG)$ again, as the dualizables~$\smash{\hTd}$ in~$\Der(kG)$ are just $\smash{\Db(kG)}$, which are indeed the compacts of~$\KInj(kG)$. In other words, the local tt-category $\cT=\KInj(kG)$ should be considered `complete' (at its closed point).

	Note that in this example the ring object~$\hat\unit_{\cT}=[\eY,\unit]=Q_\rho(\unit)$ is~$\unit_{\cT}$, confirming the fact that $\cT$ is $Y$-complete. Its localization~$\fY\otimes[\eY,\unit]=\fY=\unit_{\Stab(kG)}$ is the unit in the stable module category, whose graded endomorphism ring is Tate cohomology. This fact explains the following terminology.
\end{Exa}

\begin{Def}
\label{Def:Tate}%
	Under \Cref{Hyp}, the \emph{Tate functor} $\tY : \cat T \to \cat T$ is defined by
	\[
		\tY(t) \coloneqq \fY \otimes [\eY,t]\cong [\fY,\Sigma \eY\otimes t].
	\]
	The above natural isomorphism is the `Warwick Duality' of Greenlees~\cite[Corollary~2.5]{Greenlees01}. One observes that $\tY$ is lax monoidal. Thus $\tY(t)$ is a (weak) ring whenever $t$ is a (weak) ring. In particular, we have the \emph{Tate ring}
	\[
		\tY(\unit)=\fY\otimes [\eY,\unit]\cong[\fY,\Sigma\eY]
	\]
	which is a commutative ring object in~$\cT$. (The Tate ring~$\tY(\unit)$ was denoted $\tY$ in the introduction, for simplicity.) Note that every~$\tY(t)$, and in particular the Tate ring~$\tY(\unit)$, is a $U$-local object of~$\cT$ by construction.
\end{Def}

The vanishing of the Tate functor forces a direct-sum decomposition.
\begin{Prop}
\label{Prop:Tate-decomposition}%
	Let $t\in \cT$ be an object such that $\tY(t)=0$. Then the exact triangle~\eqref{eq:triangle-t-loc} splits and we have $t\cong(\eY\otimes t)\oplus(\fY\otimes t)$.
\end{Prop}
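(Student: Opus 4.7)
The plan is to show that the connecting morphism $\delta \colon \fY \otimes t \to \Sigma \eY \otimes t$ of the triangle~\eqref{eq:triangle-t-loc} vanishes; then the triangle splits automatically and yields the claimed direct-sum decomposition $t \cong (\eY \otimes t) \oplus (\fY \otimes t)$.

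The key observation is that $\delta$ lives in a Hom set that is canonically isomorphic to $\Hom_{\cT}(t, \tY(t))$. Indeed, the tensor-hom adjunction gives
\[
\Hom_{\cT}(\fY \otimes t,\, \Sigma \eY \otimes t) \;\cong\; \Hom_{\cT}(t,\, [\fY,\, \Sigma \eY \otimes t]),
\]
and Warwick Duality (invoked in~\Cref{Def:Tate}) identifies $[\fY, \Sigma \eY \otimes t] \cong \tY(t)$. Hence if $\tY(t)=0$, the target Hom group is zero, so the adjoint of $\delta$ is zero, and by the bijectivity of the adjunction $\delta = 0$ itself.

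The only minor point to verify is that this is really \emph{the} connecting map, not some unrelated map, but since the adjunction is a natural bijection on the whole Hom set and the Hom set contains only the zero morphism when $\tY(t)=0$, no such issue arises. I do not anticipate any obstacle here beyond making the adjunction-plus-Warwick argument explicit.
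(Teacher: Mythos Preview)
Your proof is correct and is essentially identical to the paper's own argument: the paper computes $\cT(\fY\otimes t,\Sigma\eY\otimes t)\cong\cT(t,[\fY,\Sigma\eY\otimes t])\cong\cT(t,\tY(t))$ via the same adjunction and Warwick Duality, concluding that the connecting morphism vanishes when $\tY(t)=0$.
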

\begin{proof}
	We have $\cT(\fY\otimes t,\Sigma\eY\otimes t)\cong\cT(t,[\fY,\Sigma\eY\otimes t])\cong\cT(t,\tY(t))$. Hence $\tY(t)=0$ forces the third morphism in~\eqref{eq:triangle-t-loc} to be zero.
\end{proof}

\begin{Rem}
	For any object $t\in \cat T$, tensoring the idempotent triangle \eqref{eq:idemp-triangle} by $[\eY,t]$ and using that $\eY \otimes [\eY,t] \simeq \eY\otimes t$, we obtain an exact triangle
	\begin{equation}\label{eq:norm-exact-triangle}
		\eY\otimes t \to [\eY,t] \to \tY(t) \to \Sigma \eY \otimes t
	\end{equation}
	called the `norm' exact triangle. The name comes from the `norm cofiber sequence' in equivariant homotopy theory; see \cref{rem:equivariant-norm} below.
\end{Rem}

We end this section with an easy observation that will come in handy later.
\begin{Lem}
\label{Lem:YW=0}%
	Let $Y,W\subseteq\SpcT$ be disjoint Thomason subsets: $Y\cap W=\varnothing$. Then $[s,t]=0$ in~$\cT$ for all~$s\in\TY$ and~$t\in \cT_W$.
\end{Lem}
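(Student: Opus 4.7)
The plan is to reduce the statement to a compact-support computation using the recollement structure from \Cref{Hyp}. First I would observe that by~\eqref{eq:TY}, every $s\in\TY$ satisfies $s\cong\eY\otimes s$, so using the tensor-hom adjunction we get
\[
[s,t]\cong[\eY\otimes s,t]\cong[s,[\eY,t]].
\]
Hence the statement reduces to showing that $[\eY,t]=0$ for every $t\in\cT_W$, which by~\eqref{eq:TYp} is equivalent to $t\in\TYp$. So the whole lemma follows once we prove the inclusion $\cT_W\subseteq\TYp$.

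Next, I would exploit that $\TYp=(\TYc)^\perp$ is a localizing subcategory and that $\cT_W=\Loc\langle\cT_W^{\,\cname}\rangle$ by construction. Thus it suffices to verify $c\in\TYp$ for every compact $c$ supported on~$W$; equivalently, to verify $\Homcat{T}(s,c)=0$ for every compact $s$ supported on~$Y$. For such $s,c\in\Td$, the object $s$ is dualizable, so $\Homcat{T}(s,c)\cong\Homcat{T}(\unit,s^\vee\otimes c)$. The classical support calculus for small objects gives $\supp(s^\vee\otimes c)=\supp(s^\vee)\cap\supp(c)=\supp(s)\cap\supp(c)\subseteq Y\cap W=\varnothing$, so $s^\vee\otimes c=0$ and the desired vanishing follows.

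Combining the two steps gives $\cT_W\subseteq\TYp$, and then the adjunction calculation at the start yields $[s,t]=0$ for arbitrary $s\in\TY$ and $t\in\cT_W$. The only slightly delicate point is invoking the standard facts that $\supp(s^\vee)=\supp(s)$ for dualizables and that support of tensor products intersects, but both are part of the basic formalism of~\cite{Balmer05a}; no harder input is needed.
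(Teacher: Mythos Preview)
Your proof is correct and uses essentially the same core ingredient as the paper: reduce to compact objects and invoke $\supp(s^\vee\otimes c)=\supp(s)\cap\supp(c)\subseteq Y\cap W=\varnothing$.

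The organization differs slightly. The paper argues directly with $[s,t]$: first $\supp([s,t])=\varnothing$ when both $s,t$ are compact; then extend to arbitrary $t\in\cT_W$ using that $[s,-]$ preserves coproducts for compact~$s$; finally extend to arbitrary $s\in\TY$ using that $\Ker([-,t])$ is localizing. Your route instead front-loads the adjunction $[s,t]\cong[s,[\eY,t]]$ to recast the whole statement as the inclusion $\cT_W\subseteq\TYp$, after which only one localizing-subcategory extension (in~$t$) is needed. This is a clean repackaging: it trades the paper's two-variable extension for a single one, at the cost of invoking the idempotent~$\eY$ and the identification $\TYp=\Ker([\eY,-])$ from~\eqref{eq:TYp}. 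Both arguments are short and rest on the same basic support calculus.
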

\begin{proof}
	This is clear if~$s\in\TYc$ and~$t\in\Td_W$ are dualizable since $\supp([s,t])=\supp(s^\vee)\cap\supp(t)=\varnothing$. It passes to~$t\in\cT_W$ arbitrary by compactness of~$s\in\TYc$. It then passes to every~$s\in\TY$ since $\Ker([-,t])$ is a localizing subcategory of~$\cT$.
\end{proof}

\section{The spectral map induced by completion}
\label{sec:Spc-completion}%

We next discuss the map $\varphi\colon \Spc(\hTd)\to \SpcT$ induced by the tt-functor $\widehat{(-)}=[\eY,-]\colon\Td\to \hTd$ of completion along~$Y$. We keep the notation of \cref{Hyp} and \cref{Def:completion}.

\begin{Prop}
\label{Prop:pseudo-proj-formula}%
	Consider the commutative ring object~$[\eY,\unit]$ in~$\cT$. For a morphism~$\alpha$ in~$\Td$ we have $\hat{\alpha}=0$ in~$\hTd$ if and only if~$[\eY,\unit]\otimes \alpha=0$ in~$\cT$.
\end{Prop}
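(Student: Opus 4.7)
The plan is to reduce the statement to the projection formula, reading off the equivalence from a naturality square.

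First I would establish that for any dualizable object $c\in\Td$, there is a natural isomorphism
\[
	[\eY,\unit]\otimes c \isoto [\eY,c]
\]
in~$\cT$. This is the standard projection formula in a closed symmetric monoidal category, available because $c$ is dualizable: for every test object $x\in\cT$, both sides represent the functor $x\mapsto \Homcat{T}(x\otimes\eY,c)$, using the tensor-hom adjunction together with $c\cong c^{\vee\vee}$ to move $c$ across. Crucially this map is natural in the variable~$c$.

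Second, naturality of this isomorphism in~$c$ applied to a morphism $\alpha\colon a\to b$ in~$\Td$ yields a commutative square in~$\cT$ whose horizontal arrows are $[\eY,\unit]\otimes\alpha$ and $[\eY,\alpha]$, and whose vertical arrows are the projection-formula isomorphisms at~$a$ and at~$b$. Since the verticals are invertible, $[\eY,\alpha]=0$ if and only if $[\eY,\unit]\otimes\alpha=0$.

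Finally I would translate this back to the statement. By definition~\eqref{eq:completion}, $\hat{\alpha}=[\eY,\alpha]$, viewed as a morphism in~$\hTd\subset\TYpp$. Because $\TYpp\hook\cT$ is a fully faithful semi-orthogonal summand of~$\cT$ (as recorded in~\eqref{eq:recollements}) and $\hTd\subset\TYpp$ is a full subcategory, vanishing in~$\hTd$ coincides with vanishing in~$\cT$. Combining this observation with the previous step gives the claimed equivalence. I do not expect a genuine obstacle here: the only substantive ingredient is the projection formula for dualizables, and the rest is bookkeeping about the completion functor and the fully faithful inclusion of the Bousfield completion.
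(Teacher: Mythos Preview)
Your proposal is correct and follows exactly the same approach as the paper: the paper's entire proof is the single sentence ``We have in~$\cT$ that $[\eY,c]\cong[\eY,\unit]\otimes c$ for every dualizable~$c\in\Td$,'' and you have simply spelled out the naturality and full-faithfulness details that this leaves implicit.
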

\begin{proof}
	We have in~$\cT$ that $[\eY,c]\cong[\eY,\unit]\otimes c$ for every dualizable~$c\in\Td$.
\end{proof}

Recall our discussion of the support of big (ring) objects in \Cref{sec:basics}.
\begin{Cor}\label{cor:img-of-compl}
	The image of the continuous map $\Spc(\hTd)\to \SpcT$ induced by completion is~$\Supp([\eY,\unit])$.
\end{Cor}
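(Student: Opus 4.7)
The plan is to invoke \cref{Thm:Img=Supp(A)} with $\cL = \hTd$, the tt-functor $f^* = \widehat{(-)}\colon \Td \to \hTd$, and the weak ring $A = [\eY,\unit]$. First I would verify that $A = [\eY,\unit] = \hat\unit$ is (at least) a weak ring in $\cat T$. Indeed, the completion functor $\widehat{(-)}\colon \cT\to \TYpp$ of \Cref{Def:completion} is a tensor functor (see \Cref{Rem:TY-TYpp}), so its unit $\hat\unit = [\eY,\unit]$ is a commutative ring object in~$\TYpp$; since the inclusion $\TYpp \hook \cT$ is right adjoint to a tensor functor, hence lax monoidal, $[\eY,\unit]$ is a commutative ring object in~$\cT$ as well, and in particular a weak ring.

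Next I would exploit \cref{Prop:pseudo-proj-formula}: for every morphism $\alpha$ in~$\Td$,
\[
	\widehat{(-)}(\alpha) = 0 \text{ in } \hTd \quad\iff\quad [\eY,\unit]\otimes\alpha = 0 \text{ in } \cT,
\]
i.e.\ $f^*(\alpha)=0 \iff A\otimes\alpha = 0$. This biconditional \emph{simultaneously} verifies both hypotheses \eqref{it:Img=Supp(A)-1} and \eqref{it:Img=Supp(A)-2} of \cref{Thm:Img=Supp(A)}.

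Applying both parts of that theorem therefore yields
\[
	\Supp([\eY,\unit]) \subseteq \Img\bigl(\Spc(\widehat{(-)})\bigr) \subseteq \Supp([\eY,\unit]),
\]
and hence equality, which is the claim. There is essentially no obstacle here: the result is a direct corollary, and the only thing to double-check is that $\hat\unit$ genuinely carries a (weak) ring structure in~$\cT$ so that \cref{Thm:Img=Supp(A)} applies, which is clear from the lax monoidality of the inclusion $\TYpp\hook\cT$.
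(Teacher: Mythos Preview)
Your proposal is correct and follows essentially the same approach as the paper: apply \cref{Thm:Img=Supp(A)} to the ring $A=[\eY,\unit]$, using \cref{Prop:pseudo-proj-formula} to verify both hypotheses. The paper's proof is a one-liner invoking exactly these two ingredients; your only addition is spelling out why $[\eY,\unit]$ is a (weak) ring, which the paper already records in the statement of \cref{Prop:pseudo-proj-formula}.
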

\begin{proof}
	Apply \Cref{Thm:Img=Supp(A)} to the ring~$A=[\eY,\unit]$ of \Cref{Prop:pseudo-proj-formula}.
\end{proof}

We now want to show that $\varphi\colon\Spc(\hTd)\to \SpcT$ is a homeomorphism above~$Y$.
\begin{Rem}
\label{Rem:comp-Y-prep}%
	As explained in \Cref{Rem:TY-TYpp}, the compact objects in~$\TYpp$ are given by~$\TYc=(\Td)_Y$, on which completion~$[\eY,c]\cong c$ is isomorphic to the identity. (Recall \Cref{Rem:TYc}.) These compact objects are dualizable in~$\TYpp$ since $[\eY,-]\colon \cT\to \TYpp$ is a tensor functor. Hence we have a rigid tt-category~$\hTd=(\TYpp)\dd$ that contains the subcategory~$(\TYpp)\cc$ of compact objects and the latter is equivalent (equal) to the tt-ideal~$\TYc$ in the original category.

	By \Cref{Prop:dualizables}\,(a), the class of compact objects~$\TYc$ forms a tt-ideal in~$\hTd$. For every $c\in\TYc$ and every $d\in\hTd$, we can therefore speak of the properties of the object~$c\otimes d\in\TYc\subseteq\Td$ as a small object of the original category~$\cT$. We do so in the preparatory lemmas below.
\end{Rem}

\begin{Lem}
\label{Lem:A-hat}%
	Let $\cA$ be a class of objects in~$\TYc$ and the corresponding class~$\hat{\cA}=\SET{\hat{a}}{a\in\cA}$ in~$\hTd$. Then $\cA$ is a tt-ideal in~$\Td$ if and only if~$\hat\cA$ is a tt-ideal in~$\hTd$.
\end{Lem}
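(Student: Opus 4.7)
The argument will hinge on a single observation: for any $a\in\TYc$ (which is dualizable in~$\cT$) and any $d\in\TYpp$, the projection formula gives
\[
  a\hat\otimes d \;=\; [\eY,a\otimes d] \;\cong\; a\otimes[\eY,d] \;=\; a\otimes d,
\]
where the last equality uses $d\in\TYpp$. Hence the $\hat\otimes$-tensor in $\TYpp$ agrees with the ambient $\cT$-tensor whenever one factor lies in~$\TYc$. The same projection formula applied to $[\fY,-]$ shows that $a\otimes-$ preserves~$\TYpp$. I would also note that thickness of $\cA\subseteq\TYc$ in $\Td$, in $\TYc$, and in $\hTd$ all agree, since $\TYc$ is a tt-ideal of both $\Td$ and $\hTd$ (by \Cref{Rem:comp-Y-prep} and \Cref{Prop:dualizables}) and is idempotent-complete, so cones and direct summands of $\TYc$-objects stay in $\TYc$ irrespective of the ambient.

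For the backward implication, suppose $\hat\cA$ is a tt-ideal of $\hTd$. Given $a\in\cA$ and $c\in\Td$, the key identity yields $a\otimes c\cong a\hat\otimes\hat c\in\hat\cA$, using $\hat c\in\hTd$. Since $a\otimes c\in\TYc$ and the identification $\hat b\cong b$ for $b\in\TYc$ gives $\hat\cA\cap\TYc=\cA$, we conclude $a\otimes c\in\cA$. Combined with the thickness transfer above, this exhibits $\cA$ as a tt-ideal of~$\Td$.

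For the forward implication, fix $a\in\cA$ and $d\in\hTd$; the goal is $a\otimes d\in\cA$. The plan is to exploit compact generation of~$\TYpp$ by~$\TYc$. On the one hand, $d$ lies in $\Locname_{\TYpp}(\TYc)$, and applying the coproduct-preserving exact functor $a\otimes-\colon\TYpp\to\TYpp$ together with the tt-ideal hypothesis $a\otimes\TYc\subseteq\cA$ produces $a\otimes d\in\Locname_{\TYpp}(\cA)$. On the other hand, $a\otimes d=a\hat\otimes d$ is compact in $\TYpp$ by~\Cref{Prop:dualizables}\,(a) applied to $\TYpp$. Since $\cA$ is thick in~$\TYpp$, Neeman's description of the compact objects of a compactly generated subcategory (\Cref{Rem:TY-TYpp}) yields $(\Locname_{\TYpp}(\cA))\cc=\Thick_{\TYpp}(\cA)=\cA$, so that $a\otimes d\in\cA$ as required.

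The most delicate step is the compact-versus-thick identification in the forward direction: the point is that even if $\hTd$ contains ``exotic'' dualizables outside the thick envelope of the image of completion (cf.~\Cref{Rem:exotic}), any such $d$ contributes to $a\otimes d$ only as a compact object of $\TYpp$ living in the localizing closure of~$\cA$, and this localizing closure's compacts are forced back into~$\cA$ by the thickness of~$\cA$ in~$\TYpp$.
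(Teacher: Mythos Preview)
Your proof is correct. The backward direction is the same as the paper's. For the forward direction you take a genuinely different route: the paper argues directly via the dualizability retract trick. Given $a\in\cA$ and $d\in\hTd$, one first uses that $\hat a\hat\otimes d$ is compact in~$\TYpp$ to write $\hat a\hat\otimes d\simeq\hat c$ for some $c\in\TYc$; then
\[
  \hat a^{\otimes 2}\hat\otimes\hat a^\vee\hat\otimes d
  \;\simeq\;\widehat{\,a\otimes a^\vee\otimes c\,}\in\hat\cA
\]
by the ideal hypothesis, and $\hat a\hat\otimes d$ is a direct summand of this since $\hat a$ is a retract of $\hat a\otimes\hat a^\vee\otimes\hat a$. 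This is entirely elementary and avoids any appeal to Neeman's description of compacts in a compactly generated localizing subcategory. Your argument via $a\otimes d\in\Loc_{\TYpp}(\cA)\cap(\TYpp)\cc=\Thick(\cA)=\cA$ is heavier but clean and conceptual. One small point of care: coproducts in~$\TYpp$ are not the ambient ones in~$\cT$, so the functor you want is really $a\hat\otimes-$ (which preserves coproducts because $a$ is dualizable in~$\TYpp$); your key identity shows this agrees with $a\otimes-$ on objects, so the argument goes through.
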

\begin{proof}
	Since $\TYc\cong(\TYpp)\cc$ is an equivalence of triangulated categories, $\cA$ is a thick triangulated subcategory of~$\TYc$ if and only if~$\hat{\cA}$ is so in~$\hTd$. For the `ideal' property, since completion $\Td\to \hTd$ is a tt-functor, it is clear that $\cA$ is a tt-ideal in~$\Td$ \emph{if} $\hat{\cA}$ is one in~$\hTd$. Conversely, suppose that $\cA$ is a tt-ideal in~$\TYc$. Let $a\in\cA$ and~$d\in\hTd$ and let us show that $\hat{a}\otimes d$ belongs to~$\hat{\cA}$. Note that $\hat{a}\otimes d$ is compact hence $\hat{a}\otimes d\simeq\hat{c}$ for some~$c\in\TYc$. Therefore, $\hat{a}\potimes{2}\otimes \hat{a}^\vee\otimes d\simeq\widehat{a \otimes a^\vee\otimes c}$ belongs to~$\hat{\cA}$ since~$\cA$ is an ideal. On the other hand, $\hat{a}$ is a direct summand of $\hat{a}\potimes{2}\otimes\hat{a}^\vee$ in~$\hTd$ by dualizability and it follows that $\hat{a}\otimes d$ is a direct summand of~$\hat{a}\potimes{2}\otimes \hat{a}^\vee\otimes d$, hence remains in~$\hat{\cA}$.
\end{proof}

\begin{Lem}
\label{Lem:comp-Y}%
	Let $c\in\TYc$ be a small object in the original category supported on~$Y$.
\begin{enumerate}[\rm(a)]
\item
\label{it:comp-Y-a}%
	Let $\cP\in\Spc(\Td)$ be a prime in the support of~$c$. Consider the subcategory of~$\hTd$
	\begin{equation*}
	\label{eq:comp-Y}%
		\fhat{\cP}:=\SET{d\in\hTd}{\exists\,a\in\TYc\cap \cP\textrm{ such that }\hat{a}\simeq\hat{c}\otimes d}.
	\end{equation*}
	Then~$\smash{\fhat{\cP}}$ is a prime tt-ideal of~$\hTd$ that belongs to the support of~$\hat{c}$. Moreover, its image under~$\varphi\colon\Spc(\hTd)\to\SpcT$ is~$\cP$.
\smallbreak
\item
\label{it:comp-Y-b}%
	Let $\cQ\in\Spc(\hTd)$ be a prime in the support of~$\hat{c}$ and let~$\cP=\SET{b\in\Td}{\hat{b}\in\cQ}$ be its image under~$\varphi\colon\Spc(\hTd)\to\SpcT$. Then $\cP$ belongs to the support of~$c$ and, with the notation of~\eqref{it:comp-Y-a}, we have $\fhat{\cP}=\cQ$.
\end{enumerate}
\end{Lem}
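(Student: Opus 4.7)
The plan is to treat $\fhat{\cP}$ as the preimage of the tt-ideal $\widehat{\cP\cap\TYc}$ of $\hTd$ under the endofunctor $\hat{c}\otimes-$ of $\hTd$, and to reduce each of the prime tt-ideal axioms to an analogous statement about $\cP$ inside $\Td$ via the tt-equivalence $\widehat{(-)}\colon\TYc\isoto(\TYpp)\cc$ recalled in \cref{Rem:comp-Y-prep}.

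For part~\eqref{it:comp-Y-a}, I would first observe that by \cref{Prop:dualizables}\,(a) applied inside the compactly generated tt-category $\TYpp$, the object $\hat{c}\otimes d$ is compact in $\TYpp$ for every $d\in\hTd$, hence is isomorphic to some $\hat{a}$ with $a\in\TYc$. This rewrites $\fhat{\cP}=(\hat{c}\otimes-)^{-1}(\widehat{\cP\cap\TYc})$, which is a thick tensor-closed subcategory because $\widehat{\cP\cap\TYc}$ is a tt-ideal by \cref{Lem:A-hat}. Properness of $\fhat{\cP}$ --- equivalently the support statement $\hat{c}\not\in\fhat{\cP}$ --- holds because $\hat{c}\simeq\hat{a}$ with $a\in\cP\cap\TYc$ would force $c\simeq a\in\cP$ through the tt-equivalence, contradicting $\cP\in\supp(c)$. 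The main obstacle is primality: given $d_1\otimes d_2\in\fhat{\cP}$, I would pick $a_i\in\TYc$ with $\hat{a_i}\simeq\hat{c}\otimes d_i$, and $a\in\cP\cap\TYc$ with $\hat{a}\simeq\hat{c}\otimes d_1\otimes d_2$. Tensoring the first two relations in $\hTd$ and using that $\widehat{(-)}$ is monoidal gives
\[
\widehat{a_1\otimes a_2}\;\simeq\;\hat{a_1}\otimes\hat{a_2}\;\simeq\;\hat{c}\potimes{2}\otimes d_1\otimes d_2\;\simeq\;\hat{c}\otimes\hat{a}\;\simeq\;\widehat{c\otimes a}.
\]
Pulling this identity back through the tt-equivalence yields $a_1\otimes a_2\simeq c\otimes a$ in $\TYc\subseteq\Td$; this lies in $\cP$ since $a$ does, so primality of $\cP$ produces some $a_i\in\cP\cap\TYc$, which is precisely $d_i\in\fhat{\cP}$. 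The equality $\varphi(\fhat{\cP})=\cP$ is then a direct unwinding: $b\in\Td$ maps into $\fhat{\cP}$ iff $\widehat{c\otimes b}\in\widehat{\cP\cap\TYc}$ iff $c\otimes b\in\cP$, which by primality of $\cP$ and $c\not\in\cP$ is equivalent to $b\in\cP$.

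Part~\eqref{it:comp-Y-b} is comparatively formal once (a) is in place. Since $\widehat{(-)}\colon\Td\to\hTd$ is a tt-functor, $\cP=\varphi(\cQ)$ is automatically a prime of $\Td$, and $c\in\cP$ would translate to $\hat{c}\in\cQ$, contradicting $\cQ\in\Supp(\hat{c})$; so $\cP\in\supp(c)$. For $\fhat{\cP}\subseteq\cQ$, any defining relation $\hat{c}\otimes d\simeq\hat{a}$ with $a\in\cP\cap\TYc$ places $\hat{c}\otimes d\in\cQ$, and primality of $\cQ$ together with $\hat{c}\not\in\cQ$ forces $d\in\cQ$. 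For the reverse inclusion, given $d\in\cQ$ I would again invoke compactness of $\hat{c}\otimes d$ in $\TYpp$ to write $\hat{c}\otimes d\simeq\hat{a}$ for some $a\in\TYc$; then $\hat{a}\in\cQ$ translates to $a\in\cP\cap\TYc$, witnessing $d\in\fhat{\cP}$. The running subtlety throughout is that $\hat{c}\otimes-$ is not itself a tt-functor, so identifications in $\hTd$ must be carefully lifted to $\Td$ via the equivalence $\TYc\isoto(\TYpp)\cc$ in order to exploit primality of $\cP$ there.
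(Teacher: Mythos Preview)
Your proof is correct and follows essentially the same strategy as the paper's: both identify $\fhat{\cP}$ with $(\hat c\otimes-)^{-1}(\widehat{\cP\cap\TYc})$, use \Cref{Lem:A-hat} to know the target is a tt-ideal, and reduce primality to that of~$\cP$ by writing $\hat c\otimes d_i\simeq\hat a_i$ via \Cref{Rem:comp-Y-prep}. One small wrinkle: properness ($\unit\notin\fhat{\cP}$) and the support statement ($\hat c\notin\fhat{\cP}$) are not \emph{a priori} equivalent before primality is in hand; the argument you give (``$\hat c\simeq\hat a$ with $a\in\cP\cap\TYc$'') is the properness argument, while $\hat c\in\fhat{\cP}$ would instead give $c^{\otimes 2}\in\cP$---the paper treats these two points separately, but both reduce immediately to $c\notin\cP$, so this is only a presentational nit.
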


\begin{proof}
	Consider the tt-ideal~$\cA=\TYc\cap \cP$ of~$\Td$ and the corresponding tt-ideal~$\hat{\cA}=\SET{\hat{a}}{a\in\cA}$ of~$\hTd$, applying~\Cref{Rem:comp-Y-prep,Lem:A-hat}.

	In~\eqref{it:comp-Y-a}, by construction, $\fhat{\cP}$ consists of all dualizable objects~$d\in\hTd$ such that $\hat{c}\otimes d$ belongs to~$\hat{\cA}$. This forms a tt-ideal. Let us check that it is prime. Suppose that $d,d'\in\hTd$ are such that $d\otimes d'\in\fhat{\cP}$. Consider the compact objects~$\hat{c}\otimes d$ and~$\hat{c}\otimes d'$. By \Cref{Rem:comp-Y-prep}, there exists $a,a'\in\TYc$ such that $\hat{c}\otimes d\simeq\hat{a}$ and $\hat{c}\otimes d'\simeq\hat{a}'$ and our assumption that $\hat{c}\otimes d\otimes d'\in\hat{\cA}$ implies that $\hat{a}\otimes\hat{a}'\simeq\hat{c}\otimes d\otimes \hat{c}\otimes d'$ also belongs to~$\hat{\cA}$. In other words, $a\otimes a'\in\TYc\cap \cP$ and in particular $a\otimes a'$ belongs to the prime~$\cP$. This forces one of~$a$ or~$a'$ to belong to~$\cP$, and in turn~$d\in\fhat{\cP}$ or~$d'\in\fhat{\cP}$ respectively.

	Furthermore, $\fhat{\cP}$ is proper for otherwise $\hat{c}\in\hat{\cA}$ which forces~$c\in\cA\subseteq \cP$ and this contradicts the hypothesis that $\cP$ belongs to the support of~$c$. Similarly, $\hat{c}\in\fhat{\cP}$ would force $c\potimes{2}\in\cP$, leading to the same contradiction.

	It is clear that $\SET{\hat{b}}{b\in\cP}\subseteq\fhat{\cP}$. Conversely, if~$b\in\Td$ satisfies~$\hat{b}\in\fhat{\cP}$ then $c\otimes b\in\cA\subseteq \cP$ and, using again that $c\notin\cP$ and that $\cP$ is prime, we see that $b\in\cP$. In short, $\varphi(\fhat{\cP})=\SET{b\in\Td}{\hat{b}\in\fhat{\cP}}$ is equal to~$\cP$ as claimed.

	Let us now turn to~\eqref{it:comp-Y-b}. Since $\cP$ is the preimage of~$\cQ$ under completion, $\hat{c}\notin Q$ forces $c\notin\cP$. Let us prove that $\fhat{\cP}=\cQ$. Let $d\in\fhat{\cP}$. This means that $\hat{c}\otimes d\simeq\hat{a}$ for some~$a\in\TYc\cap \cP$. From $a\in\cP=\varphi(\cQ)$ we have $\hat{a}\in\cQ$. Hence we have $\hat{c}\otimes d\simeq\hat{a}\in\cQ$ and since by assumption~$\hat{c}\notin\cQ$ we conclude that $d\in\cQ$. This proves~$\fhat{\cP}\subseteq\cQ$. Conversely, let $d\in\cQ$ and consider $a\in\TYc$ such that $\hat{c}\otimes d\simeq\hat{a}$ by \Cref{Rem:comp-Y-prep} again. We have $\hat{a}\in\cQ$ hence $a\in\cP$ by the definition of~$\cP$. Therefore, $a\in\TYc\cap \cP$ and $\hat{c}\otimes d\simeq\hat{a}$ exactly means that $d\in\fhat{\cP}$ by~\eqref{it:comp-Y-a}. In short $\cQ\subseteq\fhat{\cP}$ and we are done.
\end{proof}

\begin{Thm}
\label{Thm:Spc(compl)-on-Y}%
	The map $\varphi\colon\Spc(\hTd) \to \Spc(\Td)$ induced by completion is a homeomorphism above~$Y$, meaning that its restriction $\varphi\inv(Y)\to Y$ is a homeomorphism with the subspace topologies. Moreover, the tt-ideal $(\hTd)_{\varphi\inv(Y)}$ of~$\hTd$ supported on~$\varphi\inv(Y)$ is precisely the tt-ideal of compact objects~$(\TYpp)\cc=\TYc$.
\end{Thm}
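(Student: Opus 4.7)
The plan is threefold: use Lemma \ref{Lem:comp-Y} for the set-theoretic bijection, exploit the rigidity of $\hTd$ together with Balmer's tt-classification for the tt-ideal identification, and combine these to obtain the topological match.

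First, I would construct the inverse $\psi \colon Y \to \varphi^{-1}(Y)$ of $\varphi|_{\varphi^{-1}(Y)}$. For $\cP \in Y$, pick any $c \in \TYc$ with $\cP \in \supp(c)$; such $c$ exists because $Y = \bigcup_{c \in \TYc} \supp(c)$. Define $\psi(\cP) := \fhat{\cP}$ via Lemma \ref{Lem:comp-Y}(a). For any $\cQ \in \varphi^{-1}(Y)$, the same $c$ automatically places $\cQ \in \supp_{\hTd}(\hat c)$ (since $c \notin \varphi(\cQ)$ forces $\hat c \notin \cQ$), and Lemma \ref{Lem:comp-Y}(b) then forces $\cQ = \fhat{\varphi(\cQ)}$. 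Hence $\psi$ is a two-sided inverse of $\varphi|_{\varphi^{-1}(Y)}$ and is independent of the choice of $c$.

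Second, I would identify $(\hTd)_{\varphi^{-1}(Y)} = \TYc$. By \Cref{Rem:comp-Y-prep}, $\TYc$ is a tt-ideal of the rigid tt-category $\hTd$, hence automatically a radical tt-ideal (every tt-ideal of a rigid tt-category is radical, as $x$ is a summand of $x\otimes x\otimes x^\vee$). Lemma \ref{Lem:comp-Y}(b) gives $\supp_{\hTd}(\hat c) \subseteq \varphi^{-1}(Y)$ for every $c \in \TYc$, while the first step supplies the reverse inclusion; thus $\varphi^{-1}(Y) = \supp_{\hTd}(\TYc)$ is a Thomason subset of $\Spc(\hTd)$, and Balmer's classification of radical tt-ideals by Thomason subsets identifies the corresponding radical tt-ideal as $\TYc$.

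Third, for the homeomorphism, continuity of $\varphi|_{\varphi^{-1}(Y)}$ is automatic. For continuity of $\psi$, I would examine basic closed subsets: for $d \in \hTd$, one has $\supp_{\hTd}(d) \cap \varphi^{-1}(Y) = \bigcup_{c \in \TYc} \supp_{\hTd}(\hat c \otimes d)$ with each $\hat c \otimes d \in \TYc$ by \Cref{Rem:comp-Y-prep}, and Lemma \ref{Lem:comp-Y} sends each $\supp_{\hTd}(\hat c \otimes d)$ homeomorphically onto $\supp_\Td(\hat c \otimes d) \subseteq Y$ via $\varphi$. Combined with the analogous $Y \cap \supp(a) = \bigcup_{c \in \TYc} \supp(a \otimes c)$ for $a \in \Td$ and with the coincidence of radical tt-ideals of $\Td$ and of $\hTd$ contained in $\TYc$ (since for $x \in \TYc$ the supports $\supp_\Td(x)$ and $\supp_{\hTd}(x)$ correspond under $\varphi$, so a subcategory of $\TYc$ is closed under $\otimes\Td$ iff under $\otimes\hTd$), the bijection $\varphi$ carries basic closed subsets of one subspace to closed subsets of the other, yielding the homeomorphism.

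The main obstacle is the last topological step, especially in the presence of exotic dualizables in $\hTd$: one must verify that the $\TYc$-trace of any such exotic $d$ is already captured by a radical tt-ideal of $\Td$ contained in $\TYc$, so that the induced basic closed subset of $\varphi^{-1}(Y)$ does correspond, via $\varphi$, to a closed subset of $Y$ in the subspace topology. The essential input here is rigidity of $\hTd$ plus Balmer's classification, which force radical tt-ideals to be determined by their support.
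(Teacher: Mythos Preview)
Your proposal is correct and follows essentially the same approach as the paper: the bijection via \Cref{Lem:comp-Y}, the identification $(\hTd)_{\varphi^{-1}(Y)}=\TYc$ via the tt-ideal structure established in \Cref{Rem:comp-Y-prep}, and the homeomorphism via matching the closed subsets $\supp(c)$ for $c\in\TYc$ on both sides. Your third step is somewhat more explicit than the paper's---you unpack the role of exotic dualizables and invoke the coincidence of radical tt-ideals contained in~$\TYc$ (which is \Cref{Lem:A-hat})---whereas the paper compresses this into the phrase ``such closed subsets form a basis for the topology''; but the substance is the same.
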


\begin{proof}
	\Cref{Lem:comp-Y} shows that $\varphi$ is a bijection on the subsets~$\varphi\inv(\supp(c))\isoto \supp(c)$ for every~$c\in\TYc$. Since $Y=\cup_{c\in\TYc}\supp(c)$ is the filtered union of these closed subsets, we obtain the bijection of the statement. By general tt-geometry, the subcategory of~$\hTd$ supported on $\varphi\inv(Y)$ is precisely the tt-ideal generated by the image of~$\TYc$ under the tt-functor~$\widehat{(-)}\colon \Tc\to \hTd$ we are considering. But we proved in \Cref{Rem:comp-Y-prep} that $\widehat{(-)}$ is an equivalence between $\TYc$ and the compacts in~$\TYpp$, which form a tt-ideal in~$\hTd$. It follows that $\varphi\inv(-)$ also yields a bijection between the closed subsets of~$Y$ and of~$\varphi\inv(Y)$ of the form~$\supp(c)$ for some~$c\in\TYc$. Since such closed subsets form a basis for the topology, the bijection $\varphi\inv(Y)\isoto Y$ given by~$\varphi$ is a homeomorphism with the induced topologies from~$\SpcT$ and~$\Spc(\hTd)$.
\end{proof}

In view of \Cref{Thm:Spc(compl)-on-Y}, the map $\varphi$ is particularly interesting outside of~$Y$. Recall from \Cref{Def:Tate} that~$\tY(\unit)$ is the Tate ring.
\begin{Cor}
\label{Cor:Spc(compl)}%
	We have $\Img(\varphi)\cap Y^\complement= \Supp(\tY(\unit))$.
\end{Cor}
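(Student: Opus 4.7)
The plan is to establish the two inclusions separately, using \Cref{cor:img-of-compl} to rewrite $\Img(\varphi) = \Supp([\eY,\unit])$ and exploiting the norm triangle
\[
\eY \to [\eY,\unit] \to \tY(\unit) \to \Sigma\eY
\]
which is the case $t=\unit$ of~\eqref{eq:norm-exact-triangle} (using $\eY\otimes\unit\cong\eY$).

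The inclusion $\Supp(\tY(\unit)) \subseteq \Img(\varphi) \cap Y^\complement$ is quick. Since $\tY(\unit) = \fY \otimes [\eY,\unit]$ is $U$-local by construction, $\Supp(\tY(\unit)) \subseteq U = Y^\complement$. And the map $[\eY,\unit] \to \fY\otimes[\eY,\unit]=\tY(\unit)$ obtained by tensoring with the unit $\unit \to \fY$ is a unital map of commutative ring objects, so \Cref{rem:unital-map} yields $\Supp(\tY(\unit)) \subseteq \Supp([\eY,\unit]) = \Img(\varphi)$.

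For the reverse inclusion, pick $\cP \in \Img(\varphi) \cap U$ and, using \Cref{cor:img-of-compl} together with~\eqref{eq:supph-wring-2}, choose a homological prime $\cB \in \hcomp\inv(\cP)$ with $\homol_{\cB}([\eY,\unit]) \neq 0$. The plan is to apply the homological tensor functor $\homol_{\cB}$ to the norm triangle and read off $\homol_{\cB}(\tY(\unit)) \neq 0$: the long exact sequence in $\bar{\cA}_{\cB}$ will give $\homol_{\cB}(\tY(\unit)) \cong \homol_{\cB}([\eY,\unit])$ as soon as we show the vanishing $\homol_{\cB}(\eY) = 0$ (the analogous vanishing for $\Sigma\eY\cong\Sigma\unit\otimes\eY$ then comes for free since the kernel of $\homol_{\cB}$ is a $\otimes$-ideal).

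The heart of the argument, and the only real obstacle, is therefore the vanishing $\homol_{\cB}(\eY) = 0$ for any $\cB$ lying over a point of $U$. The input is that $\cP \in U$ forces $\TYc \subseteq \cP = \homol\inv(\cB)$, hence $\homol(\TYc) \subseteq \cB \subseteq \Loc{\cB}$. Now the class $\SET{t \in \cT}{\homol(t) \in \Loc{\cB}}$ is a localizing subcategory of $\cT$, because restricted Yoneda $\homol\colon\cT\to\Mod\Td$ is a coproduct-preserving homological functor (cf.~\Cref{Rec:mod}) and $\Loc{\cB}$ is a coproduct-closed Serre subcategory of $\Mod\Td$. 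This localizing subcategory therefore contains $\TY = \Loc{\TYc}$ and in particular $\eY$, so $\homol(\eY) \in \Loc{\cB}$ and consequently $\homol_{\cB}(\eY) = Q_{\cB}(\homol(\eY)) = 0$, as required.
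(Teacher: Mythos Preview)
Your argument is correct, but the route differs from the paper's. The paper dispatches the corollary in one line via the general identity $\Supp(t)\cap Y^{\complement}=\Supp(t\otimes\fY)$, which follows from the computation $\Supph(\fY)=\pi^{-1}(Y^{\complement})$ cited from~\cite[Lemma~3.8]{BarthelHeardSanders23b}; applying this to $t=[\eY,\unit]$ gives the result immediately since $\tY(\unit)=\fY\otimes[\eY,\unit]$. Your approach instead works through the norm triangle and the homological residue fields directly. The payoff is that your reverse inclusion is self-contained: you effectively prove the inclusion $\Supph(\eY)\subseteq\pi^{-1}(Y)$ from first principles, via the localizing-subcategory argument, rather than citing it.

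There is however a small asymmetry worth noting. In the forward inclusion you assert that ``$\tY(\unit)$ is $U$-local, hence $\Supp(\tY(\unit))\subseteq U$''. For big objects this is not a tautology: it is precisely the inclusion $\Supph(\fY)\subseteq\pi^{-1}(Y^{\complement})$, the companion of the fact you prove for~$\eY$. It is true and standard (it is exactly what the paper cites), but your proof does not supply it. You could fill this symmetrically: for $\cB$ over $\cP\in Y$, pick $c\in\TYc$ with $c\notin\cP$; then $\homol_{\cB}(\fY)\otimes\homol_{\cB}(c)=\homol_{\cB}(\fY\otimes c)=0$, and since $\homol_{\cB}(\fY)$ is a ring with $\bar\unit\hookrightarrow\homol_{\cB}(\fY)$ if nonzero (by~\cite[Proposition~3.5]{Balmer20_bigsupport}), tensoring with the $\otimes$-flat $\homol_{\cB}(c)\neq 0$ forces $\homol_{\cB}(\fY)=0$. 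Alternatively, just cite~\cite[Lemma~3.8]{BarthelHeardSanders23b} at that step.
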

\begin{proof}
	We have that $\Supp(t)\cap Y^\complement=\Supp(t\otimes\fY)$ for every~$t\in\cT$. This follows from the definition \eqref{eq:Supp(t)} and the fact that $\Supph(\fY)=\pi^{-1}(Y^{\complement})$ by \cite[Lemma~3.8]{BarthelHeardSanders23b}. By \cref{cor:img-of-compl}, we have $\Img(\varphi)=\Supp([\eY,\unit])$. The result then follows since $\tY(\unit)=\fY\otimes[\eY,\unit]$.
\end{proof}

\begin{Rem}
\label{Rem:Supp(tY)}%
	The support of the Tate ring~$\tY(\unit)$ is usually not empty. Indeed, $\Supp(\tY(\unit))=\varnothing$ forces~$\tY(\unit)=0$ by \Cref{rem:detection-for-wring}, which only happens in the `split' case of \Cref{Rem:split}, by \Cref{Prop:Tate-decomposition}. In other words, if~$Y$ is not open and closed, then the support of the Tate ring is non-empty.
\end{Rem}
\begin{Rem}
\label{Rem:patch}%
	The support~$\Supp(\tY(\unit))$ is `proconstructible', that is, it is closed for the constructible topology (\aka the patch topology). A constructible subset of the spectral space~$\SpcT$ is one built from the supports $\supp(c)$ of small objects~${c\in\Td}$ and their complements, by taking finite intersections and finite unions. The proconstructible subsets are the arbitrary intersections of constructible subsets. For instance, the complement of a Thomason subset is proconstructible. More generally, the image $\Img(\varphi)$ of a spectral map~$\varphi$ is proconstructible by~\cite[Corollary~1.3.23]{DickmannSchwartzTressl19}. It follows that $\Supp(\tY(\unit))=Y^\complement\cap\Img(\varphi)$ is always proconstructible.
\end{Rem}

Let us show that the completion of a local tt-category remains local.

\begin{Prop}
\label{Prop:local-compl}%
	Suppose that $\cT$ is local, meaning that $0$ is a prime tt-ideal in~$\Td$, and that our Thomason subset~$Y$ is non-empty. Then $\hTd$ is local. Furthermore the map~$\varphi\colon\Spc(\hTd)\to \SpcT$ preserves the closed point.
\end{Prop}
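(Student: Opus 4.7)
The plan is to identify the unique prime of $\hTd$ lying above the closed point $(0) \in \SpcT$ and show that it coincides with the zero ideal of $\hTd$. In Balmer's topology, $(0)$ being prime means that $(0)$ is the unique minimal prime, hence the unique closed point of $\SpcT$; this minimum belongs to every non-empty specialization-closed subset, and since $Y$ is non-empty and Thomason, I first conclude $(0) \in Y$.

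Next, \Cref{Thm:Spc(compl)-on-Y} supplies a unique point $\widetilde 0 \in \Spc(\hTd)$ with $\varphi(\widetilde 0) = (0)$. Using \Cref{Lem:comp-Y} applied to any non-zero $c \in \TYc$ (which exists because $Y$ is a non-empty Thomason subset), this prime admits the explicit description $\widetilde 0 = \{d \in \hTd \mid c \otimes d = 0\}$. The uniqueness of the prime above $(0)$ forces this description to be independent of the choice of non-zero $c \in \TYc$; so for any $d \in \widetilde 0$ and every non-zero $c' \in \TYc$ we actually have $c' \otimes d = 0$, and hence $c' \otimes d = 0$ for all $c' \in \TYc$.

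The crux is then short. The subcategory $\{t \in \cT \mid t \otimes d = 0\}$ of $\cT$ is a localizing tensor-ideal, and it contains $\TYc$ by the previous paragraph, hence also contains $\TY = \Loc{\TYc}$. In particular $\eY \otimes d = 0$, which by \eqref{eq:TYp} means $d \in \TYp$. But $d \in \hTd \subseteq \TYpp$, and the semi-orthogonal decomposition $\cT = \langle \TYp \perp \TYpp \rangle$ from \eqref{eq:decompositions} gives $\Hom_\cT(\TYp, \TYpp) = 0$. Taking both entries equal to $d$ forces $\id_d = 0$, so $d = 0$. This proves $\widetilde 0 = (0)$, so $\hTd$ is local, and $\varphi$ sends this closed point to $(0) \in \SpcT$ by construction. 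The principal obstacle is the final step: Bousfield semi-orthogonality is what upgrades the compact annihilation of $d$ by $\TYc$ into the membership $d \in \TYp \cap \TYpp = 0$.
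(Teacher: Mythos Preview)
Your proof is correct. It takes a genuinely different route from the paper's, though the two share the same core computation.

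The paper proves locality directly from the definition: given $c,d\in\hTd$ with $c\,\hat\otimes\,d=0$, it tensors with arbitrary $a,b\in\TYc$ to land in the local category~$\Td$, and by a symmetry argument reduces to the case where $a\otimes c=0$ for all $a\in\TYc$; it then concludes $[\eY,c]=0$, hence $c=0$ in~$\TYpp$. The closed-point statement is argued separately at the end via \Cref{Thm:Spc(compl)-on-Y}.

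You instead invoke \Cref{Lem:comp-Y} and \Cref{Thm:Spc(compl)-on-Y} up front to produce the unique prime~$\widetilde{0}$ above $(0)\in Y$, identify it explicitly as $\{d\mid c\otimes d=0\}$ for any nonzero $c\in\TYc$, and then show $\widetilde{0}=(0)$. The independence-of-$c$ step (via uniqueness of the fiber) is a nice way to upgrade from one $c$ to all of~$\TYc$; the paper achieves the analogous conclusion by the locality of~$\Td$ applied to products $(a\otimes c)\otimes(b\otimes d)$. From there, both proofs run the same engine: annihilation by all of~$\TYc$ forces $\eY\otimes d=0$ (equivalently $[\eY,d]=0$), and then $\TYp\cap\TYpp=0$ finishes.

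Your approach buys you the closed-point preservation for free, since $\varphi(\widetilde{0})=(0)$ is built into the construction. The paper's approach is more self-contained for the locality claim, not relying on the explicit fiber description of \Cref{Lem:comp-Y}. Both are valid; since \Cref{Lem:comp-Y} and \Cref{Thm:Spc(compl)-on-Y} precede this proposition, there is no circularity in your argument.
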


\begin{proof}
	Let $c,d\in \hTd$ such that $c\,\hat\otimes\,d=0$ in~$\hTd$. This means $[\eY,c\otimes d]=0$ in~$\cT$. Pick $a,b\in\TYc$ compact objects supported on~$Y$. Tensoring $[\eY,c\otimes d]=0$ by~$a\otimes b$ and using that $\eY\otimes a^\vee=a^\vee$, etc., we get
	\begin{equation}
	\label{eq:aux-local-1}%
		(a\otimes c)\otimes (b\otimes d)=0
	\end{equation}
	in~$\cT$. By \Cref{Rem:comp-Y-prep} we know that $a\otimes c$ and~$b\otimes d$ belong to~$\TYc$ and in particular~\eqref{eq:aux-local-1} can be viewed in the local category~$\Tc=\Td$. Now, either $a\otimes c=0$ for every $a\in \TYc$, or not. If not, there is one~$a\in\TYc$ such that $a\otimes c\neq 0$ and then, by locality, $b\otimes d=0$ for all~$b\in\TYc$. By symmetry of the argument, we can assume $a\otimes c=0$ in~$\cT$ for all~$a\in\TYc$. But then, by $\eY\in\Loc{\TYc}$ and cocontinuity of~$[-,c]$ we have $[\eY,c]=0$ which means~$c=0$ in~$\TYpp$. We have shown that $\hTd$ is local.

	By \Cref{Thm:Spc(compl)-on-Y} we know that $\varphi$ restricts to a homeomorphism $\varphi\inv(Y)\isoto Y$, which necessarily preserves closed points. Since $\varphi\inv(Y)$ is speciali\-zation-closed in~$\Spc(\hTd)$, its only closed point is the one of~$\Spc(\Td)$, namely~$0$.
\end{proof}

\section{Functoriality}
\label{sec:functorial}

We next consider the functoriality of the constructions of \Cref{sec:recollement}. We keep our \Cref{Hyp}. In particular, we have chosen a Thomason subset~$Y$ in~$\SpcT$.

\begin{Not}
\label{Not:f^*}%
	Let $f^*\colon \cT\to \cS$ be a geometric functor (\cref{Rec:geometric}) and let $f=\Spc(f^*)\colon \SpcS\to \SpcT$. We write $Z\coloneqq f\inv(Y)\subseteq \SpcS$ for the preimage of~$Y$. Recall that $f^*$ admits a coproduct-preserving right adjoint~$f_*\colon \cS\to \cT$, which itself admits a further right adjoint~$f^!\colon \cT\to \cS$ by Brown--Neeman Representability.
\end{Not}
\begin{Prop}
\label{Prop:functorial}%
	Keep the above \Cref{Not:f^*}, in particular $Z=f\inv(Y)$.
\begin{enumerate}[\rm(a)]
\item
\label{it:functorial-f^*}%
	We have canonical isomorphisms $f^*(\eY)\cong \eZ$ and~$f^*(\fY)\cong \fZ$ in~$\cS$. Therefore $f^*(\TY)\subseteq\SZ$ and $f^*(\TYp)\subseteq\SZp$, yielding commutative diagrams
	\[
	\vcenter{\xymatrix@C=3em{
	\TY \ar@{ >->}[r] \ar[d]_-{f^*}
	& \cT \ar@{->>}[r]^-{\eY\otimes-} \ar[d]_-{f^*}
	& \TY \ar[d]^-{f^*}
	\\
	\SZ \ar@{ >->}[r]
	& \cS \ar@{->>}[r]_-{\eZ\otimes-}
	& \SZ
	}}
	\qquadtext{and}
	\vcenter{\xymatrix@C=3em{
	\TYp \ar@{ >->}[r] \ar[d]_-{f^*}
	& \cT \ar@{->>}[r]^-{\fY\otimes-} \ar[d]_-{f^*}
	& \TYp \ar[d]^-{f^*}
	\\
	\SZp \ar@{ >->}[r]
	& \cS \ar@{->>}[r]_-{\fZ\otimes-}
	& \SZp
	}}
	\]
	showing that $f^*$ is compatible with torsion and with local subcategories.
\smallbreak
\item
\label{it:functorial-f_*}%
	We have four canonical isomorphisms of functors $\cS \to \cT$:
	\begin{align*}
	f_*(\eZ\otimes -)\cong \eY\otimes f_*(-)
	& \qquadtext{and}
	f_*(\fZ\otimes -)\cong \fY\otimes f_*(-)
	\\
	f_*([\eZ,-])\cong [\eY,f_*(-)]
	& \qquadtext{and}
	f_*([\fZ,-])\cong [\fY,f_*(-)].
	\end{align*}
	Therefore $f_*(\SZ^\perp)\subseteq\TYp$ and $f_*(\SZpp)\subseteq\TYpp$, yielding commutative diagrams
	\[
	\vcenter{\xymatrix@C=3em{
	\SZ^\perp \ar@{ >->}[r] \ar[d]_-{f_*}
	& \cS \ar@{->>}@<.3em>[r]^-{\fZ\otimes-} \ar@{->>}@<-.3em>[r]_-{[\fZ,-]} \ar[d]_-{f_*}
	& \SZ^\perp \ar[d]^-{f_*}
	\\
	\TYp \ar@{ >->}[r]
	& \cT \ar@{->>}@<.3em>[r]^-{\fY\otimes-} \ar@{->>}@<-.3em>[r]_-{[\fY,-]}
	& \TYp
	}}
	\qquadtext{and}
	\vcenter{\xymatrix@C=3em{
	\SZpp \ar@{ >->}[r] \ar[d]_-{f_*}
	& \cS \ar@{->>}[r]^-{[\eZ,-]} \ar[d]_-{f_*}
	& \SZpp \ar[d]^-{f_*}
	\\
	\TYpp \ar@{ >->}[r]
	& \cT \ar@{->>}[r]_-{[\eY,-]}
	& \TYpp
	}}
	\]
	showing that $f_*$ is compatible with local and with complete subcategories. (The square with~$\ff\otimes-$ and the one with~$[\ff,-]$ commute separately.)
\smallbreak
\item
\label{it:functorial-f^!}%
	We have two canonical isomorphisms of functors~$\cT\to \cS$:
	\[
	[\eZ,f^!(-)]\cong f^!([\eY,-])
	\qquadtext{and}
	[\fZ,f^!(-)]\cong f^!([\fY,-]).
	\]
	Therefore $f^!(\TYpp)\subseteq\SZpp$ and we have a commutative diagram
	\[
	\vcenter{\xymatrix@C=3em{
	\TYpp \ar@{ >->}[r] \ar[d]_-{f^!}
	& \cT \ar@{->>}[r]^-{[\fY,-]} \ar[d]_-{f^!}
	& \TYpp \ar[d]^-{f^!}
	\\
	\SZpp \ar@{ >->}[r]
	& \cS \ar@{->>}[r]_-{[\fZ,-]}
	& \SZpp
	}}
	\]
	showing that $f^!$ is compatible with complete subcategories.
\smallbreak
\item
\label{it:functorial-hat-f^*}%
	Define the (Bousfield) completion of~$f^*$ as the functor~$\hat{f}^*\colon \TYpp\to \SZpp$ given by
	\[
		\hat{f}^*(-):=[\eZ,f^*(-)].
	\]
	Then $\hat{f}^*$ is a tt-functor, left adjoint to (the restriction of) $f_*\colon \SZpp\to \TYpp$ from~\eqref{it:functorial-f_*}. The functor $\hat{f}^*$ restricts to a tt-functor~$\hTd\to \hSd$ on dualizable objects and the following two diagrams commute
	\[
	\vcenter{\xymatrix@C=3em{
	\cT \ar@{->>}[r]^-{[\eY,-]} \ar[d]_-{f^*}
	& \TYpp \ar[d]^-{\hat{f}^*}
	\\
	\cS \ar@{->>}[r]_-{[\eZ,-]}
	& \SZpp
	}}
	\qquadtext{and}
	\vcenter{\xymatrix@C=3em{
	\Td \ar@{->>}[r]^-{\widehat{(-)}{}^{Y}} \ar[d]_-{f^*}
	& \hTd \ar[d]^-{\hat{f}^*}
	\\
	\Sd \ar@{->>}[r]_-{\widehat{(-)}{}^{Z}}
	& \hSd
	}}
	\]
	where $\widehat{(-)}{}^{Y}=[\eY,-]\colon \Td\to \hTd$ is completion along~$Y$ (\Cref{Def:completion}) and $\widehat{(-)}{}^{Z}=[\eZ,-]\colon \Sd\to \hSd$ is completion along~$Z=f\inv(Y)$.
\smallbreak
\item
\label{it:functorial-f_*-Tate}%
	We have a canonical isomorphism
	\begin{equation*}
		\tY\circ {f_*}\cong f_*\circ\ttt_{Z}.
	\end{equation*}
	of lax symmetric monoidal functors $\cS\to \cT$ (see~\Cref{Def:Tate}).
\end{enumerate}
\end{Prop}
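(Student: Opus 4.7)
The plan is to construct the isomorphism by simply composing two of the canonical isomorphisms already produced in~\eqref{it:functorial-f_*}. Unwinding \Cref{Def:Tate} on both sides, the statement asks for a natural isomorphism
\[
f_*(\ttt_{Z}(s)) \;=\; f_*\bigl(\fZ \otimes [\eZ, s]\bigr) \;\cong\; \fY \otimes [\eY, f_*(s)] \;=\; \tY(f_*(s))
\]
in~$s \in \cS$, compatible with the lax symmetric monoidal structures on each side.

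I would first apply the projection-type iso $f_*(\fZ \otimes -) \cong \fY \otimes f_*(-)$ from~\eqref{it:functorial-f_*} with input $[\eZ, s]$, and then apply the internal-hom iso $f_*([\eZ, -]) \cong [\eY, f_*(-)]$, also from~\eqref{it:functorial-f_*}, to the resulting factor $f_*([\eZ, s])$. These two steps yield the displayed iso at once, and naturality in~$s$ is inherited from the two component natural transformations.

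To upgrade this to an isomorphism of \emph{lax symmetric monoidal} functors, I note that $f_*$ is lax symmetric monoidal as right adjoint of the tt-functor $f^*$, while $\tY$ and $\ttt_{Z}$ are lax symmetric monoidal by design (\Cref{Def:Tate}, following Greenlees~\cite{Greenlees01}). Both ingredients from~\eqref{it:functorial-f_*} are monoidal natural transformations: the first is the projection formula specialised to the $\otimes$-idempotent $\fZ \cong f^*(\fY)$ supplied by~\eqref{it:functorial-f^*}, which is monoidal for any strong-monoidal/lax-monoidal adjunction; the second is the standard Hom-tensor compatibility, built from the same adjunction using $f^*(\eY) \cong \eZ$ from~\eqref{it:functorial-f^*}. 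A composite of monoidal natural isomorphisms is monoidal, giving the desired iso of lax symmetric monoidal functors $\cS \to \cT$.

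The main obstacle, insofar as there is one, is not the underlying isomorphism --- which is a two-line consequence of part~\eqref{it:functorial-f_*} --- but the bookkeeping required to verify monoidal compatibility. This reduces to checking that both the projection formula and the internal-hom/adjunction isomorphisms used are monoidal whenever the strong-monoidal comparisons $f^*(\fY) \cong \fZ$ and $f^*(\eY) \cong \eZ$ of~\eqref{it:functorial-f^*} are invertible, which is exactly our situation.
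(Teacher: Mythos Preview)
Your proposal is correct and follows essentially the same approach as the paper: both compose the two isomorphisms $f_*(\fZ\otimes-)\cong\fY\otimes f_*(-)$ and $f_*([\eZ,-])\cong[\eY,f_*(-)]$ from part~\eqref{it:functorial-f_*} to obtain the underlying natural isomorphism, and both treat the lax monoidal compatibility as a routine (if tedious) verification. The paper offers a slightly more concrete hint for that verification --- a specific commutative diagram involving the isomorphism $[a,f_*(b)]\simeq f_*[f^*a,b]$ --- whereas you appeal to the monoidality of the projection formula and the internal-hom comparison in a strong/lax monoidal adjunction; both are acceptable sketches of the same bookkeeping.
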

\begin{proof}
	In~\eqref{it:functorial-f^*} we have $f^*(\eY)\cong\eZ$ and $f^*(\fY)\cong\fZ$ by~\cite[Theorem~6.3]{BalmerFavi11}. The other statements follow from~\eqref{eq:TY} and~\eqref{eq:TYp}. The first two isomorphisms in~\eqref{it:functorial-f_*} follow from the above and the projection formula. Taking right adjoints in
	\[
		(\eZ\otimes -)\circ f^*\cong f^*\circ (\eY\otimes-)
		\qquadtext{and}
		(\fZ\otimes -)\circ f^*\cong f^*\circ (\fY\otimes-)
	\]
	gives the other two isomorphisms. The other statements follow from~\eqref{eq:TYp} and~\eqref{eq:TYpp}. For~\eqref{it:functorial-f^!}, we can take right adjoints in the first two isomorphisms in~\eqref{it:functorial-f_*}. The other statements follow from~\eqref{eq:TYpp}. Combining two of the isomorphisms in~\eqref{it:functorial-f_*} we obtain
	\begin{align*}
		f_*(\ttt_{Z}(s))& \overset{\textrm{def}}{\ =\ } f_*(\fZ \otimes [\eZ, s]) \overset{\eqref{it:functorial-f_*}}{\ \cong\ }
		\fY\otimes[\eY,f_*(s)]\overset{\textrm{def}}{\ =\ }\tY(f_*(s))
	\end{align*}
	which is the isomorphism of~\eqref{it:functorial-f_*-Tate}. We leave to the reader the tedious verification that this is an isomorphism of lax monoidal functors. In carrying out this verification, it may be helpful to use that the isomorphism $[a,f_*(b)]\simeq f_*[f^*a,b]$ of~\cite[(2.17)]{BalmerDellAmbrogioSanders16} makes the following diagram commute
	\[\xymatrix@=0.95em{
			[a_1,f_* b_1]\otimes[a_2,f_* b_2] \ar[dd] \ar[r]^-{\simeq} & f_*[f^* a_1,b_1]\otimes f_*[f^* a_2,b_2] \ar[r]^-{\mathrm{lax}} & f_*([f^* a_1,b_1]\otimes [f^* a_2,b_2]) \ar[d]\\
																	  &&f_*[f^*a_1 \otimes f^*a_2, b_1\otimes b_2] \ar[d]^{\simeq} \\
		[a_1 \otimes a_2,f_* b_1 \otimes f_* b_2] \ar[r]^-{[1,\mathrm{lax}]} & [a_1\otimes a_2,f_*(b_1\otimes b_2)] \ar[r]^-{\simeq} & f_*[f^*(a_1\otimes a_2),b_1\otimes b_2].
	}\]
	The projection formula satisfies an analogous property. For part~\eqref{it:functorial-hat-f^*}, we need to prove that the composition of tt-functors $[\eZ,-]\circ f^*\colon \cT\to \cS\to \SZpp$ vanishes on the localizing ideal~$\TYp$, for then it factors uniquely as in the left-hand square of~\eqref{it:functorial-hat-f^*}. This holds since~\eqref{it:functorial-f^*} gives the inclusion $f^*(\TYp)\subseteq \SZp=\Ker([\eZ,-])$; see~\eqref{eq:TYpp}. The restriction to dualizable objects is then formal. Finally the adjunction is a direct computation. For every $t\in\TYpp$ and~$s\in\SZpp$, we have $\TYpp(t,f_*(s))=\cT(t,f_*(s))\cong\cS(f^*(t),s)$ since $\TYpp\subseteq\cT$ is a full subcategory and~$f^*\adj f_*$. The last group is also~$\cS(f^*(t),s)\cong\SZpp([\eZ,f^*(t)],s)$ since $[\eZ,-]$ is the left adjoint to the inclusion~$\SZpp\into\cS$ by~\eqref{eq:recollements}. We conclude by the definition of~$\hat{f}^*(t)=[\eZ,f^*(t)]$.
\end{proof}

\begin{Rem}
	We only prove functoriality of the Tate construction for the pushforward~$f_*$ and not for~$f^*$. It is not true that $f^*(\ttt_{Y}(t))$ agrees with $\ttt_{f^{-1}(Y)}(f^*(t))$ even when $f^*$ is a localization. For instance, if $f^*\colon \cT\to \cT/\TY\cong\cat T|_U$ is the localization away from~$Y$ itself, then $f^{-1}(Y)=\emptyset$ and $\ttt_{f^{-1}(Y)}=0$. On the other hand, $f^*(\ttt_{Y}(t))$ `is' essentially $\ttt_Y(t)$ in full, since $\ttt_Y(t)=\fY\otimes[\eY,t]=f_*f^*([\eY,t])$ is local over~$U$.
\end{Rem}

\begin{Rem}
\label{rem:closed-functor}%
	However, if $f^*:\cat T\to \cat S$ is a \emph{closed} functor, meaning in particular $f^*([t,t'])\cong[f^*t,f^*t']$ for every~$t,t'\in\cT$, then the isomorphisms $f^*(\eY)\cong\eYfinv$ and $f^*(\fY)\cong\fYfinv$ give us a natural isomorphism
	\begin{equation*}
		f^*(\tY(-)) \cong \ttt_{f^{-1}(Y)}(f^*(-))
	\end{equation*}
	of lax symmetric monoidal functors $\cat T \to \cat S$. This is the case, for example, if $f^*$ satisfies Grothendieck--Neeman duality; see~\cite[(3.12)]{BalmerDellAmbrogioSanders16}.
\end{Rem}

\begin{Cor}\label{Cor:tate-supp-base-change}
	Using \cref{Not:f^*}, the following hold:
	\begin{enumerate}[\rm(a)]
		\item We have $\Supp(\tY(f_*(\unitS))) = f(\Supp(\ttt_{f^{-1}(Y)}(\unitS)))$ in~$\SpcT$.
\smallbreak
		\item If $f^*$ is a closed functor and $\cat T$ satisfies the steel condition (\Cref{Def:hcomp}) then $\Supp(\ttt_{f^{-1}(Y)}(\unitS)) = f^{-1}(\Supp(\tY(\unitT)))$ in~$\SpcS$.
	\end{enumerate}
\end{Cor}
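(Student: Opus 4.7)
The plan is to deduce both parts by applying the functoriality of the Tate construction (\Cref{Prop:functorial}\,\eqref{it:functorial-f_*-Tate} and \Cref{rem:closed-functor}) to identify the relevant objects, and then invoking the base-change formulas for support of weak rings (\Cref{Prop:Supp-base-change}). The only small verification needed is that the objects to which we apply these formulas are indeed weak rings; but $\tY$ and $\ttt_{f^{-1}(Y)}$ are lax symmetric monoidal (as compositions of tensor and internal-hom with idempotents), so they send the tensor unit to a commutative ring object, hence to a weak ring. This is already recorded in \Cref{Def:Tate}.

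For part~(a), I would first apply \Cref{Prop:functorial}\,\eqref{it:functorial-f_*-Tate} to the object $\unitS$ to obtain a natural isomorphism
\[
	\tY(f_*(\unitS)) \cong f_*(\ttt_{f^{-1}(Y)}(\unitS))
\]
in $\cT$. Since $\ttt_{f^{-1}(Y)}(\unitS)$ is a (commutative) ring object in $\cS$, it is in particular a weak ring, so \Cref{Prop:Supp-base-change}\,\eqref{it:Supp-b} applies to yield
\[
	\Supp\bigl(f_*(\ttt_{f^{-1}(Y)}(\unitS))\bigr) = f\bigl(\Supp(\ttt_{f^{-1}(Y)}(\unitS))\bigr).
\]
Combining the two displays gives the desired equality.

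For part~(b), the hypothesis that $f^*$ is closed permits us to invoke \Cref{rem:closed-functor}, applied to $\unitT$, giving
\[
	f^*(\tY(\unitT)) \cong \ttt_{f^{-1}(Y)}(f^*(\unitT)) \cong \ttt_{f^{-1}(Y)}(\unitS),
\]
where the second isomorphism uses that tt-functors preserve the unit. Now $\tY(\unitT)$ is a commutative ring object in $\cT$, hence a weak ring, and $\cT$ satisfies the steel condition by hypothesis; therefore \Cref{Prop:Supp-base-change}\,\eqref{it:Supp-a} yields
\[
	\Supp\bigl(f^*(\tY(\unitT))\bigr) = f^{-1}\bigl(\Supp(\tY(\unitT))\bigr),
\]
and substituting the previous isomorphism on the left-hand side produces the claimed formula.

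I expect no serious obstacle, as the argument reduces in both cases to quoting \Cref{Prop:functorial}/\Cref{rem:closed-functor} and then \Cref{Prop:Supp-base-change}; the only item requiring attention is checking the weak-ring hypothesis, which is immediate from the lax monoidal nature of the Tate functor.
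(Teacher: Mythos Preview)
Your proof is correct and follows exactly the paper's approach: part~(a) combines \Cref{Prop:functorial}\,\eqref{it:functorial-f_*-Tate} with \Cref{Prop:Supp-base-change}\,\eqref{it:Supp-b}, and part~(b) combines \Cref{rem:closed-functor} with \Cref{Prop:Supp-base-change}\,\eqref{it:Supp-a}. Your additional verification that the Tate objects are weak rings is a helpful elaboration but not a departure from the paper's argument.
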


\begin{proof}
	Part~(a) follows from \cref{Prop:functorial}(\ref{it:functorial-f_*-Tate}) and \cref{Prop:Supp-base-change}\,\eqref{it:Supp-b}. Part~(b) follows from \cref{rem:closed-functor} and \cref{Prop:Supp-base-change}\,\eqref{it:Supp-a}.
\end{proof}

\begin{Rem}
\label{Rem:Tate-functorial}%
	We can precompose the isomorphism of \Cref{Prop:functorial}\,\eqref{it:functorial-f_*-Tate} with~$f^*$ which yields by the projection formula~$f_*\circ f^*\cong f_*(\unit)\otimes -$ an isomorphism
	\[
		\tY(f_*(\unit)\otimes -) \cong f_*(\ttt_{f^{-1}(Y)}f^*(-))
	\]
	of lax symmetric monoidal functors $\cat T\to \cat T$. Let us spell this out for a finite localization.
\end{Rem}

\begin{Cor}
\label{Cor:Tate-loc}%
	Let $W\subseteq \SpcT$ be a Thomason subset and consider $f^*\colon \cT\onto \cS=\cT/\cT_W$ the localization of~$\cT$ away from~$W$. Let $f_*\colon \cS\into\cT$ be the fully faithful right adjoint. (If one identifies $\cS$ with $\cT_W^{\perp}$ as in~\eqref{eq:recollements} then~$f^*=\ff_W\otimes-$ and $f_*=\incl$.) Note that $f\inv(Y)=Y\cap W^\complement$ since $f\colon \SpcS\hook\SpcT$ is the inclusion of~$W^\complement$ (\Cref{Rem:TU}). Then for every $t\in \cT$ we have a natural isomorphism
	\[
		\tY(\ff_W\otimes t) \cong f_*(\ttt_{Y\cap W^\complement}f^*(t))
	\]
	in~$\cT$
	and in particular $f^*(\tY(\ff_W\otimes t))\cong \ttt_{Y\cap W^\complement}f^*(t)$ in~$\cS=\cT\restr{W^\complement}$.
\end{Cor}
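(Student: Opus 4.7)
The plan is to identify this statement as a direct specialization of the general functoriality isomorphism recorded in \Cref{Rem:Tate-functorial}, namely
\[
\tY(f_*(\unitS) \otimes -) \cong f_*(\ttt_{f^{-1}(Y)} f^*(-)),
\]
applied to the particular geometric functor $f^*\colon \cT \onto \cS = \cT/\cT_W$ under consideration, together with its fully faithful right adjoint $f_* = \incl \colon \cT_W^\perp \hook \cT$.

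First I would unpack the two ingredients on the right-hand side of that formula in our specific setting. Under the identification $\cS = \cT_W^\perp$ with $f^* = \ff_W \otimes -$, the unit of~$\cS$ is $\unitS = f^*(\unit_{\cT}) = \ff_W$, and since $f_*$ is the inclusion we get $f_*(\unitS) = \ff_W$ viewed as an object of~$\cT$. For the Thomason subset, $f\colon \SpcS \hook \SpcT$ is the inclusion of~$W^\complement$ by \Cref{Rem:TU}, so $f^{-1}(Y) = Y \cap W^\complement$. Inserting both identifications into \Cref{Rem:Tate-functorial} and evaluating at~$t$ yields exactly
\[
\tY(\ff_W \otimes t) \cong f_*(\ttt_{Y \cap W^\complement} f^*(t))
\]
as required.

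For the parenthetical in-particular statement I would apply $f^*$ to both sides and invoke $f^* \circ f_* \cong \id_\cS$, which holds because $f_*$ is fully faithful; the right-hand side then collapses to $\ttt_{Y \cap W^\complement} f^*(t)$ in~$\cS = \cT\restr{W^\complement}$.

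There is no genuine obstacle beyond careful bookkeeping: the substantive content has already been packaged into \Cref{Prop:functorial}\,\eqref{it:functorial-f_*-Tate} and the projection formula $f_* f^*(-) \cong f_*(\unitS) \otimes -$ used to produce \Cref{Rem:Tate-functorial}. The one point worth emphasizing in the write-up is that the apparently ad hoc factor~$\ff_W$ on the left-hand side of the corollary is precisely what the projection formula supplies under the identification $f_*(\unitS) = \ff_W$ specific to this finite-localization situation.
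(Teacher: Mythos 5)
Your proposal is correct and follows exactly the paper's own proof: both identify $f_*(\unit_{\cS})=\ff_W$, invoke \Cref{Rem:Tate-functorial}, and obtain the ``in particular'' statement by applying $f^*$ and using $f^*\circ f_*\cong\Id_{\cS}$.
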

\begin{proof}
	We have $f_*(\unit)=\ff_W$ as in~\eqref{eq:recollements} and $f^*\circ f_*\cong \Id_{\cS}$ as with any localization. Applying \Cref{Rem:Tate-functorial} we see that $\tY(\ff_W\otimes t) \cong f_*(\ttt_{Y\cap W^\complement}f^*(t))$. Applying~$f^*$ to both sides gives the second result.
\end{proof}

This allows us to prove an excision result about the Tate construction.
\begin{Thm}
\label{Thm:Tate-excision}%
	Let $W\subseteq \SpcT$ be a Thomason subset such that $Y\cap W=\varnothing$. Let $\cS=\cT/\cT_W=\cT\restr{W^\complement}$ be the $W^\complement$-local tt-category and~$f^*\colon \cT\onto \cS$ the localization. Note that $Y\subseteq W^\complement=\SpcS$, so we can $Y$-complete~$\cS$.
	\begin{enumerate}[\rm(a)]
	\item
	The Bousfield completed tt-functor~$\hat{f}^*\colon \TYpp\onto \SZpp$ of \Cref{Prop:functorial}\,\eqref{it:functorial-hat-f^*} is an equivalence. In particular, it restricts to an equivalence $\hat{f}^*\colon \hTd\isoto \hSd$ on dualizables, between the $Y$-completion of~$\Td$ and the $Y$-completion of~$\Sd$.
	\smallbreak
	\item
	For every $t\in \cT$ the Tate object~$\tY(t)$ is $W^\complement$-local in~$\cT$ and we have a canonical isomorphism
	\[
		\tY(t)\cong f_*(\tY(f^*(t)))
	\]
	where~$f_*$ is the fully faithful right adjoint of~$f^*$. In other words, the Tate functor can be computed $W^\complement$-locally.
	\end{enumerate}
\end{Thm}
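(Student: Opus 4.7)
The plan is to deduce both statements from the disjointness hypothesis $Y\cap W=\varnothing$ via \Cref{Lem:YW=0} and \Cref{Cor:Tate-loc}. The key vanishing throughout is $[\eY,\ee_{W}\otimes t]=0$ for any $t\in\cT$; this is an instance of \Cref{Lem:YW=0} since $\eY\in\TY$ and $\ee_{W}\otimes t\in\cT_{W}$.

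For part~(a), \Cref{Prop:functorial}\,\eqref{it:functorial-hat-f^*} provides the adjunction $\hat{f}^{*}\dashv f_{*}|_{\SZpp}$, and my plan is to check that both the unit and counit are isomorphisms. The counit at $s\in\SZpp$ reduces at once to
\[
\hat{f}^{*}f_{*}(s)=[\eZ,f^{*}f_{*}(s)]\cong[\eZ,s]\cong s,
\]
using $f^{*}f_{*}\cong\Id_{\cS}$ (as $f_{*}$ is fully faithful) and that $s$ is $Z$-complete. For the unit at $t\in\TYpp$, \Cref{Prop:functorial}\,\eqref{it:functorial-f_*} combined with the projection formula $f_{*}f^{*}\cong\ff_{W}\otimes-$ rewrites
\[
f_{*}\hat{f}^{*}(t)=f_{*}[\eZ,f^{*}(t)]\cong[\eY,f_{*}f^{*}(t)]\cong[\eY,\ff_{W}\otimes t].
\]
Applying $[\eY,-]$ to the idempotent triangle $\ee_{W}\otimes t\to t\to\ff_{W}\otimes t\to\Sigma\ee_{W}\otimes t$ and invoking the key vanishing gives $[\eY,\ff_{W}\otimes t]\cong[\eY,t]$, while the counit of the $Y$-completion adjunction gives $[\eY,t]\cong t$ since $t\in\TYpp$. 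Thus the unit of $\hat{f}^{*}\dashv f_{*}|_{\SZpp}$ is an isomorphism. The asserted equivalence on dualizables follows immediately, as equivalences preserve and reflect dualizability.

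For part~(b), I apply the Tate functor $\tY=\fY\otimes[\eY,-]$ to the same triangle $\ee_{W}\otimes t\to t\to\ff_{W}\otimes t$. The key vanishing gives $\tY(\ee_{W}\otimes t)=0$, and hence $\tY(t)\cong\tY(\ff_{W}\otimes t)$. Then \Cref{Cor:Tate-loc} applied to the Thomason subset $W$ yields
\[
\tY(\ff_{W}\otimes t)\cong f_{*}\big(\ttt_{Y\cap W^{\complement}}f^{*}(t)\big)=f_{*}\big(\tY(f^{*}(t))\big),
\]
using $Y\cap W^{\complement}=Y$ since $Y\subseteq W^{\complement}$. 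The $W^{\complement}$-locality of $\tY(t)$ is then automatic from its identification with an object in the essential image of $f_{*}\colon\cS\hookrightarrow\cT$.

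The delicate point, which I would pin down carefully, is identifying the abstract unit $t\to f_{*}\hat{f}^{*}(t)$ of $\hat{f}^{*}\dashv f_{*}|_{\SZpp}$ with the explicit composite $t\to\ff_{W}\otimes t\to[\eY,\ff_{W}\otimes t]$ used in the computation; this is a naturality chase threading through the $f^{*}\dashv f_{*}$ adjunction and the isomorphism of \Cref{Prop:functorial}\,\eqref{it:functorial-f_*}, but once set up properly it is routine. Everything else amounts to clean bookkeeping with \Cref{Lem:YW=0} and \Cref{Cor:Tate-loc}.
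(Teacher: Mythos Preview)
Your proof is correct and follows essentially the same approach as the paper: both hinge on the vanishing $[\eY,\ee_W\otimes t]=0$ from \Cref{Lem:YW=0}, the commutation $f_*[\eZ,-]\cong[\eY,f_*(-)]$ from \Cref{Prop:functorial}\,\eqref{it:functorial-f_*}, and \Cref{Cor:Tate-loc} for part~(b).

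The one notable difference is in part~(a): the paper simply observes that since $f_*$ is fully faithful, it suffices to show $f_*\circ\hat{f}^*\cong\Id_{\TYpp}$ as functors (any natural isomorphism will do, since a fully faithful functor that is essentially surjective is an equivalence). You instead verify that both the unit and counit of the adjunction $\hat{f}^*\dashv f_*$ are isomorphisms, which obliges you to identify the abstract unit with the explicit composite---the ``delicate point'' you flag. The paper's formulation sidesteps this entirely, so the naturality chase you worry about is not actually needed.
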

\begin{proof}
	Let us temporarily write $Z=f\inv(Y)$ for $Y$ seen inside~$\SpcS$, in accordance with \Cref{Prop:functorial}, where $f\colon \SpcS\hook\SpcT$ is the inclusion onto~$W^\complement$. We claim that the fully faithful $f_*\colon \cS\to \cT$ which we already know restricts to~$\SZpp\to \TYpp$ by \Cref{Prop:functorial}\,\eqref{it:functorial-f_*} defines an inverse to~$\smash{\hat{f}^*}\colon \TYpp\to \SZpp$. Since $f_*$ is fully faithful, it suffices to prove that $f_*\circ\hat{f}^*\simeq \Id_{\TYpp}$.

	By \Cref{Lem:YW=0} the assumption $Y\cap W=\varnothing$ gives $[\eY,\ee_W\otimes t]=0$ for every~$t\in\cT$. This forces $[\eY,t]\cong[\eY,\ff_W\otimes t]$. Since $\ff_W\otimes-\cong f_*f^*\colon \cT\to \cT$, this shows that $[\eY,t]\cong[\eY,f_*f^*t]\cong f_*[\eZ,f^*t]$ by \Cref{Prop:functorial}\,\eqref{it:functorial-f_*}. If we take $t\in\TYpp$ we have $[\eY,t]=t$ and the previous equation reads $t\cong f_*(\hat{f}^*(t))$ by the definition of~$\hat{f}^*$.

	The second part follows similarly from the above $[\eY,t]\cong[\eY,\ff_W\otimes t]$:
	\[
		\tY(t)\overset{\textrm{def}}{\ =\ } \fY\otimes[\eY,t]\cong\fY\otimes[\eY,\ff_W\otimes t]\overset{\textrm{def}}{\ =\ } \tY(\ff_W\otimes t)
	\]
	and the latter is isomorphic to $f_*(\ttt_{Y\cap W^\complement}(f^*(t)))$ by~\Cref{Cor:Tate-loc}. This is the statement since $W^\complement\cap Y=Y$.
\end{proof}

\begin{Cor}
\label{Cor:completion-excision}%
	Let $W\subseteq \SpcT$ be a Thomason subset such that $Y\cap W=\varnothing$. Then the image of $Y$-completion $\Spc(\hTd)\to \SpcT$ is contained in~$W^\complement$.
\qed
\end{Cor}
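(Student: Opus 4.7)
The plan is to deduce this corollary directly from the excision result \Cref{Thm:Tate-excision}\,(a) together with the naturality square of \Cref{Prop:functorial}\,\eqref{it:functorial-hat-f^*}. I would introduce the finite localization $f^*\colon \cT\onto \cS := \cT/\cT_W$ away from~$W$, so that by \Cref{Rem:TU} the induced map $f\colon\Spc(\Sd)\to\SpcT$ is precisely the inclusion of~$W^\complement$. The hypothesis $Y\cap W=\varnothing$ gives $Y\subseteq W^\complement=\Spc(\Sd)$, so in the notation of \Cref{Not:f^*} we have $Z:=f^{-1}(Y)=Y$, and we may $Y$-complete~$\cS$ to obtain $\widehat{(-)}^{Z}\colon\Sd\to\hSd$.

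The right-hand square of \Cref{Prop:functorial}\,\eqref{it:functorial-hat-f^*} then takes the form
\[
\vcenter{\xymatrix@C=3em{
\Td \ar@{->>}[r]^-{\widehat{(-)}{}^{Y}} \ar[d]_-{f^*}
& \hTd \ar[d]^-{\hat{f}^*}
\\
\Sd \ar@{->>}[r]_-{\widehat{(-)}{}^{Z}}
& \hSd.
}}
\]
The key input is \Cref{Thm:Tate-excision}\,(a), whose hypothesis $Y\cap W=\varnothing$ is exactly what we have assumed: it tells us that the right vertical arrow $\hat{f}^*$ is an equivalence of tt-categories. Consequently $\Spc(\hat{f}^*)\colon\Spc(\hSd)\to\Spc(\hTd)$ is a homeomorphism, and applying $\Spc$ to the square yields the commutative diagram of spectra expressing $\varphi_Y\circ\Spc(\hat{f}^*)=f\circ\varphi_Z$.

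Since $\Spc(\hat{f}^*)$ is surjective, this chases through to
\[
\Img(\varphi_Y)=\Img\bigl(f\circ\varphi_Z\bigr)\subseteq\Img(f)=W^\complement,
\]
which is the desired conclusion. I do not foresee a genuine obstacle here: once \Cref{Thm:Tate-excision} has been established, the corollary is a formal diagram chase, and the only minor verification required is that $Z=Y$ inside $\Spc(\Sd)=W^\complement$, which is immediate from the disjointness hypothesis. (A parallel `support-theoretic' argument would work too: by \Cref{cor:img-of-compl} the image in question equals $\Supp([\eY,\unit])$, and one can show directly via \Cref{Lem:YW=0} that $\ee_W\otimes[\eY,\unit]=0$, hence the support avoids~$W$; but the route above is cleaner since it repackages exactly the content already extracted in \Cref{Thm:Tate-excision}.)
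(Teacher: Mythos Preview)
Your proposal is correct and is exactly the argument the paper has in mind: the corollary is stated immediately after \Cref{Thm:Tate-excision} with a bare \qed, and your diagram chase via \Cref{Prop:functorial}\,\eqref{it:functorial-hat-f^*} and the equivalence $\hat{f}^*$ from \Cref{Thm:Tate-excision}\,(a) is precisely how one unpacks that \qed. The alternative support-theoretic route you sketch is also viable (and is morally what underlies the proof of \Cref{Thm:Tate-excision} itself), but the main argument you give is the intended one.
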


\begin{Rem}
\label{Rem:W(Y)}%
	For $Y$ fixed, there is a biggest Thomason subset~$W=W(Y)$ such that $Y\cap W=\varnothing$, namely the union of all of them
	\begin{equation}
	\label{eq:W(Y)}%
		W(Y)\coloneqq \bigcup_{W \textrm{ Thomason}\atop W\cap Y=\varnothing}W=\bigcup_{a\in\Td \atop\supp(a)\cap Y=\varnothing}\supp(a).
	\end{equation}
	This Thomason subset is also the support of the kernel of completion:
\end{Rem}
\begin{Prop}
	The support of~$\Ker\big(\widehat{(-)}\colon \Td\to \hTd\big)$ is the~$W(Y)$ of~\eqref{eq:W(Y)}. In other words, for $c\in\Td$ we have $\hat{c}=0$ if and only if~$\supp(c)\cap Y=\varnothing$.
\end{Prop}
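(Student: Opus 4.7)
The plan is to prove the two implications of the ``in other words'' restatement separately, since once we have the equivalence $\hat{c}=0 \Leftrightarrow \supp(c)\cap Y = \varnothing$, the statement that $\supp(\Ker(\widehat{(-)})) = W(Y)$ follows immediately from the description~\eqref{eq:W(Y)} of $W(Y)$.

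For the easy direction ($\Leftarrow$), suppose $\supp(c)\cap Y = \varnothing$. Setting $W\coloneqq\supp(c)$, this is a Thomason subset of $\SpcT$ disjoint from $Y$, and $c\in\Td_W \subseteq \cT_W$. Taking $s=\eY\in\TY$ and $t=c\in\cT_W$ in \Cref{Lem:YW=0}, we conclude $\hat{c}=[\eY,c]=0$ in $\cT$, hence also in $\hTd$.

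For the harder direction ($\Rightarrow$), I would argue by contraposition: assuming $\supp(c)\cap Y \neq \varnothing$, pick a prime $\cP\in \supp(c)\cap Y$. Since $Y$ is Thomason, we can write $Y=\bigcup_{a\in\TYc}\supp(a)$, so there exists $a\in\TYc$ with $\cP\in \supp(a)$. Then $b\coloneqq a\otimes c$ lies in $\TYc$ (since $\TYc$ is a tt-ideal of~$\Td$) and $\cP\in \supp(a)\cap\supp(c)=\supp(b)$. Applying \Cref{Lem:comp-Y}\eqref{it:comp-Y-a} to $b\in\TYc$ at the prime $\cP\in\supp(b)$, we obtain a prime $\fhat{\cP}\in\Spc(\hTd)$ lying in $\supp(\hat{b})$; in particular $\hat{b}\neq 0$ in $\hTd$. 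But $\hat{b}=\widehat{a\otimes c}\cong\hat{a}\otimes\hat{c}$ because $\widehat{(-)}$ is a tt-functor, so $\hat{c}=0$ would force $\hat{b}=0$, a contradiction.

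I do not foresee a real obstacle: the key technical input (\Cref{Lem:comp-Y}) has already done the heavy lifting of producing primes above $Y$ in $\Spc(\hTd)$, and the rest is a straightforward manipulation using that $\TYc$ is a tt-ideal and that completion is monoidal. The only subtlety worth double-checking is that $\hat{b}$ computed in $\hTd$ really equals (up to canonical isomorphism) the object $[\eY,b]$ in $\cT$, so that ``$\hat{c}=0$ in $\hTd$'' entails ``$\hat{c}=0$ in $\cT$'' (and conversely); but this is built into \Cref{Def:completion} via the fully faithful inclusion $\TYpp\subseteq\cT$.
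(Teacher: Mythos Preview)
Your proof is correct. For the $(\Leftarrow)$ direction you and the paper argue the same way in substance: the paper writes $\hat{c}=[\eY,c]\cong[\eY\otimes c^\vee,\unit]=0$ directly, while you cite \Cref{Lem:YW=0}, whose proof is precisely this dualizability computation.

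For the $(\Rightarrow)$ direction the paper takes a shorter route than yours. Rather than invoking \Cref{Lem:comp-Y} to manufacture a prime in $\supp(\hat{b})$, it simply observes that for any $d\in\TYc$ one has $\hat{c}\otimes d\cong c\otimes d$ (since $d$ is dualizable and $c\otimes d\in\TYc$ is already complete, cf.~\Cref{Rem:comp-Y-prep}); hence $\hat{c}=0$ forces $c\otimes d=0$, so $\supp(c)\cap\supp(d)=\varnothing$ for every $d\in\TYc$, and taking the union over $d$ gives $\supp(c)\cap Y=\varnothing$. Your contrapositive argument via \Cref{Lem:comp-Y} is valid but brings in the prime-construction machinery unnecessarily: the fact that $b\in\TYc$ is nonzero already gives $\hat{b}\cong b\neq 0$ immediately, since completion is the identity on~$\TYc$.
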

\begin{proof}
	If $\hat{c}=0$ then for every~$d\in\TYc$ we have $0=\hat{c}\otimes d\cong c\otimes d$ (see \Cref{Rem:comp-Y-prep}). Thus $\supp(c)\cap \supp(d)=\varnothing$. Since $Y=\cup_{d\in\TYc}\supp(d)$ we get~$\supp(c)\cap Y=\varnothing$. Conversely, if $\supp(c)\cap Y=\varnothing$ then $\hat{c}=[\eY,c]\cong [\eY\otimes c^\vee,\unit]=0$.
\end{proof}

\begin{Exa}
	Suppose that $Y$ contains all the closed points of~$\SpcT$. This holds for example if $\cT$ is local and~$Y\neq\varnothing$. Then the completion tt-functor $\Td\to \hTd$ along~$Y$ is conservative (on dualizable objects).
\end{Exa}

\begin{Rem}
	Completion along~$Y$, like any tt-functor, can be decomposed into a localization followed by a conservative induced functor:
	\[
		\Td \onto \Sd\coloneqq\Td\restr{W(Y)^\complement} \xto{\widehat{(-)}} \hTd\cong\hat{\cS}\dd.
	\]
	Here, \Cref{Thm:Tate-excision} tells us that the second tt-functor is just completion with respect to the `same'~$Y$, using that $Y\subseteq W(Y)^\complement=\SpcS$ is contained in the spectrum of the localization. The $Y$-completion and the Tate ring are~$W(Y)^\complement$-local phenomena. Consequently, we can replace~$\cT$ by~$\cS=\cT\restr{W(Y)^\complement}$ and suppose that $W(Y)=\varnothing$, that is, we can assume that the support of every non-zero dualizable object meets~$Y$. In other words, we can assume that $\widehat{(-)}\colon \Td\to \hTd$ is conservative.
\end{Rem}

\begin{Not}
	For any subset $S \subset \SpcT$, we write
	\[
		\gen(S) \coloneqq \SET{x \in \SpcT}{\overbar{\{x\}} \cap S \neq \emptyset}
	\]
	for the set of generalizations of $S$.
\end{Not}

\begin{Prop}\label{prop:Supp-in-gen}
	The Thomason subset $W(Y)^{\complement}$ of \eqref{eq:W(Y)} coincides with the subset $\gen(\Ycons)$ where~$\Ycons$ denotes the closure of $Y$ in the constructible topology. If the Thomason subset $Y$ is closed or if the space $\SpcT$ is noetherian then $\gen(\Ycons) =\gen(Y)$.
\end{Prop}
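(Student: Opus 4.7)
My plan is to reformulate $W(Y)^{\complement}$ as the set of points $x$ such that every Thomason closed subset $F=\supp(a)$ of $\SpcT$ containing $x$ meets~$Y$; this is a direct reading of the definition~\eqref{eq:W(Y)} using that every Thomason closed subset has the form $\supp(a)$ for some dualizable~$a$. Then I would prove $W(Y)^{\complement}=\gen(\Ycons)$ by a double inclusion, exploiting that in the constructible topology every Thomason closed set is clopen and that $(\SpcT)^{\mathrm{cons}}$ is a compact (Stone) space.

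For the inclusion $\gen(\Ycons)\subseteq W(Y)^{\complement}$ I would pick $y\in\adhpt{x}\cap\Ycons$; for any Thomason closed~$F\ni x$ we have $\adhpt{x}\subseteq F$ hence $y\in F$, and since $F$ is a constructible (hence clopen) neighborhood of $y\in \Ycons$ in the constructible topology, it must meet~$Y$. Thus every such $F$ meets~$Y$, i.e.\ $x\in W(Y)^{\complement}$.

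The reverse inclusion is the main technical step. Assume $x\in W(Y)^{\complement}$ and consider the downward filtered family $\mathcal{F}=\{F\text{ Thomason closed}:x\in F\}$ (closed under finite intersections because $\supp(a)\cap\supp(b)=\supp(a\otimes b)$). By hypothesis each $F\cap Y$ is nonempty, hence so is $F\cap \Ycons$. Each $F$ is clopen and $\Ycons$ is closed in the constructible topology, so each $F\cap\Ycons$ is closed in the compact Hausdorff space $(\SpcT)^{\mathrm{cons}}$. The family $\{F\cap\Ycons\}_{F\in\mathcal{F}}$ therefore has the finite intersection property in a compact space, so its total intersection is nonempty. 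Since every closed subset of a spectral space is the intersection of the Thomason closed sets containing it, $\bigcap_{F\in\mathcal{F}}F=\adhpt{x}$, giving $\adhpt{x}\cap\Ycons\neq\varnothing$, i.e.\ $x\in\gen(\Ycons)$.

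For the final clause: if $Y$ is closed in~$\SpcT$ then $Y$ is pro-constructible (being the intersection of the Thomason closed sets it is contained in), so it is already closed in the constructible topology and $\Ycons=Y$. If $\SpcT$ is noetherian, every closed subset is Thomason closed; hence for any $y\in\Ycons$ the closure $\adhpt{y}$ is a Thomason closed neighborhood of~$y$ in the constructible topology and must meet~$Y$. Given $x\in\gen(\Ycons)$, choose $y\in\adhpt{x}\cap\Ycons$ and then $y'\in\adhpt{y}\cap Y\subseteq\adhpt{x}\cap Y$, which shows $x\in\gen(Y)$; the reverse inclusion is trivial.
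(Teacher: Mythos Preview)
Your proof is correct. The paper's argument is shorter but less self-contained: it observes that $W(Y)^{\complement}$ is the closure $\Yinv$ of~$Y$ in the Hochster-dual topology and then cites \cite[Corollary~1.5.5]{DickmannSchwartzTressl19} for the identity $\Yinv=\gen(\Ycons)$. Your double-inclusion argument via the finite intersection property in the compact Hausdorff space $(\SpcT)^{\mathrm{cons}}$ is essentially a direct proof of that cited fact specialized to the situation at hand, so the two approaches are close cousins. The payoff of your route is that it avoids the external reference and the Hochster-dual language; the payoff of the paper's route is brevity. For the noetherian clause the two arguments are again minor variants: the paper argues the contrapositive using that $\adhpt{x}$ is Hochster-dual open, while you push forward along specializations using that $\adhpt{y}$ is a constructible neighborhood of~$y$; both rest on the same key point that in a noetherian spectral space every closed set is Thomason.
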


\begin{proof}
	Observe that $\cat P \in W(Y)^{\complement}$ if and only if $W \cap Y \neq \emptyset$ for every Thomason subset~$W$ which contains $\cat P$. This is equivalent to saying that $\cat P$ belongs to the closure of $Y$ in the Hochster dual topology $\smash{\Yinv}$. The latter coincides with $\gen(\Ycons)$ by~\cite[Corollary 1.5.5]{DickmannSchwartzTressl19}. For the second part, first note that if $Y$ is closed then it is proconstructible, hence $\Ycons = Y$. Finally, we claim that if $\Spc(\Td)$ is noetherian then $\smash{\Yinv} = \gen(\Ycons)$ is contained in, and hence coincides with,~$\gen(Y)$. Indeed, if $x \not\in \gen(Y)$ then $\overbar{\{x\}} \cap Y = \emptyset$. If the space is noetherian then $\overbar{\{x\}}$ is Thomason hence open in the Hochster dual topology. Thus $\overbar{\{x\}} \cap Y = \emptyset$ implies $x \not\in \smash{\Yinv}$.
\end{proof}

\begin{Rem}
	Thus, if the Thomason subset $Y$ is closed or if $\SpcT$ is noetherian, then every point in $\Supp(\tY(\unit))$ is a generalization of a point in $Y$. The following example shows that in general this need not be the case, which justifies the need for the more general statement of \cref{prop:Supp-in-gen}.
\end{Rem}

\begin{Exa}
	Let $R$ be a non-noetherian absolutely flat ring, such as an infinite product of fields, and consider $\cat T\coloneqq\Der(R)$. For any non-open point $\mathfrak p \in \Spec(R)$, the complement $Y\coloneqq \{\mathfrak p\}^{\complement} = \gen(\mathfrak p)^{\complement}$ is Thomason (but not closed). The image of completion $\Spc(\hTd)\to \Spc(\Td)$ must be everything; otherwise, $\Supp(\tY(\unit))=\emptyset$ so that $Y$ would be open and closed by \cref{Rem:Supp(tY)}, which is false. Thus, in this example $\Supp(\tY(\unit)) = \{\mathfrak p\}$. Note that $\mathfrak p$ is not a generalization of any point in~$Y$. However, in this example $Y$ is constructibly dense, so indeed $\Supp(\tY(\unit)) = \gen(\Ycons) \cap Y^{\complement} = \SpcT \cap Y^{\complement} = Y^{\complement}$.
\end{Exa}

We can also illustrate this phenomenon in another area of mathematics:
\begin{Exa}
	Let $\bbF$ be a finite field and let~$\cT$ be the derived category of Artin motives over~$\bbF$ with coefficients in a field~$k$ of positive characteristic. This big tt-category is also the derived category of permutation modules over the absolute Galois group of~$\bbF$; see~\cite{BalmerGallauer25}. Its spectrum of dualizables looks as follows
	\[
	\xymatrix@C=.5em@R=1em{
		\cM_0 && \cM_1 && \cdots && \cM_n && \cM_{n+1} && \cdots && \cM_{\infty}
		\\
		& \cP_1 \ar@{~>}[lu] \ar@{~>}[ru]
		&& \ar@{~>}[lu]
		& \cdots
		& \ar@{~>}[ru]
		&& \cP_n \ar@{~>}[lu] \ar@{~>}[ru]
		&& \cP_{n+1} \ar@{~>}[lu] \ar@{~>}[ru]
		& \cdots
	}
	\]
	See details in \cite[Theorem~1.4]{BalmerGallauer25}. In particular, the point~$\cM_{\infty}$ is closed but not Thomason. Its complement $Y=\SET{\cM_n}{0\le n<\infty}\cup\SET{\cP_n}{1\le n<\infty}$ is a Thomason subset which is not closed, for $\cM_{\infty}$ belongs to its closure. It follows that~$\cM_{\infty}$ belongs to the support of the Tate ring~$\tY(\unit)$ as $Y^\complement=\{\cM_\infty\}$ and the category is not split. Note that $\cM_{\infty}$ is not a generalization of any point in~$Y$. This is an example where $\SpcT$ is not noetherian and $Y$ is not closed. On the other hand, $\cM_{\infty}$ is contained in~$\Ycons$.
\end{Exa}

\begin{Rem}
	We have established that in general
	\[
		\Img(\varphi_Y) \subseteq W(Y)^{\complement}=\gen(\Ycons).
	\]
	Several examples given below (e.g., \cref{exa:valuation-domain}) show that this can be a strict inclusion.
\end{Rem}

\section{The Tate Intermediate Value Theorem}
\label{sec:TIV}

We keep \Cref{Hyp}: We have a `big' tt-category~$\cT$ and a Thomason subset~$Y\subseteq\SpcT$ of its spectrum. We write $\widehat{(-)}\colon \Td\to \hTd$ for the completion along~$Y$ as in \Cref{Def:completion} and~$\varphi=\varphi_Y\colon\Spc(\hTd)\to\SpcT$ for the induced map on spectra. Recall from \Cref{Thm:Spc(compl)-on-Y} that $\varphi$ is a homeomorphism above~$Y$.

\begin{Thm}[Tate Intermediate Value Theorem, Strong Form]
\label{Thm:TIV}%
	Let $\cP_1\in Y$ and let $\cQ_1\in\Spc(\hTd)$ be its unique preimage in the spectrum of completion. Let $\cP_0\in \SpcT$ be a generalization of~$\cP_1$ which is not contained in~$Y$, that is, $\cP_1\in\adhpt{\cP_0}$ and~$\cP_0\notin Y$. Then there exists a point $\cQ\in\Spc(\hTd)$, which is a generalization of~$\cQ_1$ and which is not contained in~$\varphi\inv(Y)$, whose image under~$\varphi$ is an intermediate specialization between~$\cP_0$ and~$\cP_1$, that is, $\varphi(\cQ)\in\adhpt{\cP_0}$ and $\cP_1\in\adhpt{\varphi(\cQ)}$.

	\[
	\vcenter{\hbox{
		\begin{tikzpicture}
		\filldraw[color=purple!30] (-1.5,0) -- (-1.5,-4) --(4.0,-4) -- (4,0);
		\filldraw[color=cyan,rounded corners] (-1,0) -- (-.5,-1.5) -- (0,-1.0)--(1.0,-3) --(2,-3) -- (3,-1.0)-- (3.5,-1.5)-- (4,0);
		\draw[color=black,rounded corners] (-1,0) -- (-.5,-1.5) -- (0,-1.0)--(1.0,-3) --(2,-3) -- (3,-1.0)-- (3.5,-1.5)-- (4,0);
	\filldraw[color=yellow,rounded corners] (0,0) -- (1,-2) --(1.5,-2) -- (2.5,0);
	\draw[color=black,rounded corners] (0,0) -- (1,-2) --(1.5,-2) -- (2.5,0);
	\node at (1,-0.4) {$Y$};
	\draw (1.25,-1.3) node {$\bullet$};
	\draw[thick] (1.25,-1.3) -- (1.25,-3.5);
	\node[scale=.75] at (1.25,-1) {$\cat P_1=\varphi(\cQ_1)$};
	\draw (1.5,-2.5) node {$\bullet$}; \node[scale=.75] at (1.9,-2.4) {$\varphi(\cat Q)$}; \draw[thick,dashed] (1.25,-1.3) -- (1.5,-2.5) -- (1.25,-3.5);
	\node at (3,-.75) {$\Supp(\tY)$};
	\node at (3.25,-3.5) {$\Img(\varphi)^\complement$};
	\draw (1.25,-3.5) node {$\bullet$}; \node[scale=.75] at (1.5,-3.5) {$\cat P_0$};
	\end{tikzpicture}}}
	\quad\overset{\varphi}{\longleftarrow}
		\vcenter{
			\hbox{
		\begin{tikzpicture}
		\filldraw[color=white!0] (-1.5,0) -- (-1.5,-4) --(4.0,-4) -- (4,0);
		\filldraw[color=cyan,rounded corners] (-1,0) -- (-1,-1.5) -- (1.0,-3) --(1.5,-3) --  (3.5,-1.5)-- (3.5,0);
		\draw[color=black,rounded corners] (-1,0) -- (-1,-1.5) -- (1.0,-3) --(1.5,-3) --  (3.5,-1.5)-- (3.5,0);
	\filldraw[color=yellow,rounded corners] (0,0) -- (1,-2) --(1.5,-2) -- (2.5,0);
	\draw[color=black,rounded corners] (0,0) -- (1,-2) --(1.5,-2) -- (2.5,0);
	\draw (1.25,-1.3) node {$\bullet$}; \node[scale=.75] at (1.3,-1) {$\cat Q_1$};
	\draw (1.25,-2.5) node {$\bullet$};\node[scale=.75] at (1.425,-2.5) {$\cat Q$};
	\node at (1.3,-0.4) {$\varphi\inv Y\cong Y$};
	\node at (2.8,-.75) {$\varphi\inv Y^\complement$};
	\node[scale=.75] at (1.075,-2.5) {$\exists$};
	\draw[thick,dashed] (1.25,-1.3) -- (1.25,-2.5);
		\end{tikzpicture}}}
	\]
\end{Thm}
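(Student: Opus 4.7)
My plan is to apply Balmer's prime-existence lemma (\Cref{Rec:brothers}(a)) in the essentially small rigid tt-category $\hTd$, with the tt-ideal
\[
\cJ \coloneqq \cQ_1 + \TYc,
\]
which is a tt-ideal of $\hTd$ since $\TYc \subseteq \hTd$ is a tt-ideal by \Cref{Prop:dualizables}(a) and \Cref{Rem:comp-Y-prep}, and the $\otimes$-multiplicative class
\[
S \coloneqq \widehat{\Td \sminus \cP_0} = \{\hat b : b \in \Td \sminus \cP_0\} \subseteq \hTd,
\]
which is multiplicative because $\cP_0$ is prime and $\widehat{(-)}\colon \Td \to \hTd$ is a tt-functor.

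Granted the key disjointness $\cJ \cap S = \varnothing$ (the crux, see below), Balmer's lemma produces a prime $\cQ \in \Spc(\hTd)$ with $\cJ \subseteq \cQ$ and $\cQ \cap S = \varnothing$. All the required properties of $\cQ$ then follow at once: $\cQ_1 \subseteq \cJ \subseteq \cQ$ makes $\cQ$ a generalization of $\cQ_1$; the inclusion $\TYc \subseteq \cQ$ together with \Cref{Thm:Spc(compl)-on-Y} (identifying $\varphi\inv(Y)$ with the Thomason subset of $\Spc(\hTd)$ supporting the tt-ideal $\TYc$) gives $\cQ \notin \varphi\inv(Y)$; the condition $\cQ \cap S = \varnothing$ reads $\hat b \notin \cQ$ for every $b \notin \cP_0$, equivalently $\varphi(\cQ) \subseteq \cP_0$; and $\cQ_1 \subseteq \cQ$ forces $\cP_1 = \varphi(\cQ_1) \subseteq \varphi(\cQ)$. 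Altogether we obtain the sandwich $\cP_1 \subseteq \varphi(\cQ) \subseteq \cP_0$ with $\varphi(\cQ) \notin Y$, which is exactly the pair of specializations $\cP_0 \leadsto \varphi(\cQ) \leadsto \cP_1$ demanded by the theorem.

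The hard part will be establishing $\cJ \cap S = \varnothing$. Suppose towards contradiction that $\hat b \in \cJ$ for some $b \in \Td \sminus \cP_0$. Appealing to the bijection between radical tt-ideals of the rigid tt-category $\hTd$ and Thomason subsets of $\Spc(\hTd)$, this forces $\supp(\hat b) = \varphi\inv(\supp(b))$ into $\supp(\cJ) = \supp(\cQ_1) \cup \varphi\inv(Y)$, equivalently, $\gen(\cQ_1) \cap \varphi\inv(U) \cap \varphi\inv(\supp(b)) = \varnothing$. My plan is to refute this by constructing a witness prime $\cQ^* \in \gen(\cQ_1) \cap \varphi\inv(U)$ whose image $\varphi(\cQ^*)$ is contained in $\cP_0$: then $b \notin \cP_0$ forces $b \notin \varphi(\cQ^*)$, placing $\cQ^*$ in the allegedly empty intersection. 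The witness $\cQ^*$ is to be extracted from \Cref{Thm:Img=Supp(A)} applied to a suitable composite of completion and localization, combined with the formula $\Img(\varphi) \cap U = \Supp(\tY(\unit))$ from \Cref{Cor:Spc(compl)} and the pseudo-projection formula \Cref{Prop:pseudo-proj-formula}, which together locate the Tate support between $\cP_0$ and $\cP_1$. A preliminary reduction via \Cref{Thm:Tate-excision} to the case $W(Y) = \varnothing$ (in which the completion tt-functor is conservative and $\cP_0$ already lies in $W(Y)^{\complement}$) should streamline the argument, while the final arrangement that $\cQ^*$ generalizes $\cQ_1$ specifically will rely on the homeomorphism over $Y$ from \Cref{Thm:Spc(compl)-on-Y} together with spectral-space manipulations.
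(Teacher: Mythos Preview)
Your setup is sound and the deductions you draw from the disjointness $\cJ\cap S=\varnothing$ are correct. The genuine gap is in how you propose to \emph{prove} that disjointness. The witness prime $\cQ^*$ you plan to construct --- a point of $\gen(\cQ_1)\cap\varphi^{-1}(U)$ whose image under $\varphi$ is contained in $\cP_0$ --- is exactly the $\cQ$ the theorem asks for, so the argument is circular. The results you invoke (\Cref{Thm:Img=Supp(A)}, \Cref{Cor:Spc(compl)}, \Cref{Prop:pseudo-proj-formula}) only describe the \emph{image} of $\varphi$: they tell you $\Img(\varphi)\cap U=\Supp(\tY(\unit))$, but carry no specialization information whatsoever, so they cannot place any point of that support between $\cP_0$ and $\cP_1$. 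Your reduction to $W(Y)=\varnothing$ via \Cref{Thm:Tate-excision} makes completion conservative but does not help here; you would still need to produce a specific prime satisfying specialization constraints, which is the content of the theorem itself.

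The paper avoids this circularity by a sharper preliminary reduction: localize at $\cP_1$, i.e.\ pass to $\cS=\cT|_{\gen(\cP_1)}$, using the functoriality of completion in \Cref{Prop:functorial}\,\eqref{it:functorial-hat-f^*}. This makes $\Td$ local with $\cP_1=0$ the closed point, and then $\hTd$ is local too with $\cQ_1=0$ by \Cref{Prop:local-compl}. Now every prime of $\hTd$ automatically generalizes $\cQ_1$, so your $\cJ$ collapses to $\TYc$ alone, and the disjointness $S\cap\TYc=\varnothing$ becomes a direct algebraic check with no witness prime needed: if $\hat b\simeq\hat c$ for some $b\notin\cP_0$ and $c\in\TYc$, apply the left adjoint $\eY\otimes-$ of completion to get $\eY\otimes b\simeq c$ in $\cT$; hence $\fY\otimes b$ is dualizable as well, and since $(\eY\otimes b)\otimes(\fY\otimes b)=0$ in the local $\Td$, one of the two factors must vanish. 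Either case contradicts the hypotheses ($\fY\otimes b=0$ forces $\cP_0\in\supp(b)\subseteq Y$; $\eY\otimes b=0$ forces $\cP_1\in\adhpt{\cP_0}\subseteq\supp(b)\subseteq Y^\complement$). This local reduction is the missing idea.
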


\begin{proof}
	We begin by reducing to the case where $\Td$ (and therefore~$\hTd$ by \Cref{Prop:local-compl}) is local, and where $\cP_1=0$ and~$\cQ_1=0$ are the closed points.

	To this end, let us use localization away from a Thomason subset~$W\subset\SpcT$. We shall apply this to $W=\Supp(\cP_1)=\gen(\cP_1)^{\complement}$ but this choice is not essential right now. We simply assume that $\cP_0$ and~$\cP_1$ belong to~$W^\complement$, which is equivalent (since $\cP_1\subseteq\cP_0$) to $\Td_W\subseteq\cP_1$.

	Let us use functoriality of completion, in the form of~\Cref{Prop:functorial}\,\eqref{it:functorial-hat-f^*}, for the localization functor $f^*\colon \cT\onto \cS\coloneqq\cT\restr{W^{\complement}}=\cT/\cT_W$. Recall that in this case~$f=\Spc(f^*)\colon \SpcS\hook\SpcT$ is the inclusion of~$W^\complement$ inside~$\SpcT$ and therefore $Z\coloneqq f\inv(Y)$ is simply $Y\cap W^\complement$. \Cref{Prop:functorial}\,\eqref{it:functorial-hat-f^*} provides the left-hand commutative square of tt-categories below
	\begin{equation}
	\label{eq:Q-compl-Spc}%
	\vcenter{
		\xymatrix@C=4em{
	\Td \ar[r]^-{\widehat{(-)}{}^Y}  \ar[d]_-{f^*}
	& \hTd \ar[d]^-{\hat{f}^*} &
	\Spc(\Td) \ar@{<-}[r]^-{\varphi_Y} \ar@{<-^)}[d]_-{f}
	& \Spc(\hTd) \ar@{<-}[d]^-{\Spc(\hat{f}^*)}
	\\
	\Sd \ar[r]^-{\widehat{(-)}{}^Z}
	& \hSd &
	\Spc(\cS^{\dname^{\vphantom{I}}}) \ar@{<-}[r]^-{\varphi_Z}
	& \Spc(\hSd)
	}}
	\end{equation}
	whose image under~$\Spc(-)$ is the right-hand commutative square.

	Now observe the following:
\begin{enumerate}[(1)]
\item
\label{it:redux-to-local-1}%
	When viewed in~$\SpcS=W^\complement$, the point~$\cP_1$ belongs to~$Z=Y\cap W^\complement$ and the point~$\cP_0$ lies outside of~$Z$.
\item
\label{it:redux-to-local-2}%
	By \cref{Thm:Spc(compl)-on-Y}, there exists a unique point~$\cR_1\in\Spc(\hSd)$ with ${\varphi_Z(\cR_1)=\cP_1}$. By the commutativity of~\eqref{eq:Q-compl-Spc} and the uniqueness of the preimage of~$\cP_1\in Y$ under~$\varphi_Y$, the image of~$\cR_1$ under~$\Spc(\hat{f}^*)$ must be our~$\cQ_1$.
\item
\label{it:redux-to-local-3}%
	If there exists a point $\cR$ in~$\Spc(\hSd)$ such that $\cR_1\in\adhpt{\cR}$ and~$\varphi_Z(\cR)\in\adhpt{\cP_0}$ in~$\SpcS$, then there exists a point~$\cQ$ in~$\Spc(\hTd)$ such that $\cQ_1\in\adhpt{\cQ}$ and~$\varphi_Y(\cQ)\in\adhpt{\cP_0}$ in~$\SpcT$. Indeed, in view of the commutativity of~\eqref{eq:Q-compl-Spc}, it suffices to define~$\cQ$ as the image of~$\cR$ under~$\Spc(\hat{f}^*)$.
\item
\label{it:redux-to-local-4}%
	Let $\cR\in\Spc(\hSd)$ and let $\cQ\in\Spc(\hTd)$ be its image under~$\Spc(\hat{f}^*)$. If $\varphi_Z(\cR)$ is not in~$Z$ then $\varphi_Y(\cQ)$ is not in~$Y$. Indeed, if $Y\ni \varphi_Y(\cQ)=f(\varphi_Z(\cR))$ then $\varphi_Z(\cR)\in f\inv(Y)=Y\cap W^\complement=Z$.
\end{enumerate}

	In other words, it is enough to prove the theorem for this~$\cS=\cT\restr{W^\complement}$, as long as~$W^\complement$ contains~$\cP_1$ (and therefore~$\cP_0$). The best such~$W$ is of course~$W=\Supp(\cP_1)$, in which case $W^{\complement} = \gen(\cat P_1)$ is local and $\cP_1/\Td_{W}=0$. So we can indeed assume that~$\Td$ is local and that $\cP_1=0$, in which case its unique preimage under~$\varphi_Y$ is~$\cQ_1=0$ by \cref{Prop:local-compl}.

\smallbreak
	\emph{We assume that~$\cP_1=0$ and~$\cQ_1=0$ and that~$\Td$ and~$\hTd$ are local.}
\smallbreak

	In particular, all primes~$\cQ$ that we construct in~$\Spc(\hTd)$ are generalizations of~$\cQ_1$ so we do not need to worry about this property anymore. We only need to find a prime~$\cQ\in\Spc(\hTd)$ whose image under~$\varphi_Y$ lies outside of~$Y$ and is a specialization of~$\cP_0$.

	We consider two classes of objects in~$\hTd$. First, the $\otimes$-multiplicative collection
	\[
		S \coloneqq \SET{\hat{b}}{b \in \Td, b\not\in \cP_0}
	\]
	which is the image of the complement of~$\cP_0$ and, secondly, the tt-ideal
	\[
		\cat J \coloneqq \SET{\hat{c}}{c\in\TYc}
	\]
	corresponding to~$\TYc$ in~$\hTd$, by \Cref{Lem:A-hat}. The key fact is that they are disjoint:
\begin{Claim}
\label{claim:S-J}%
	With the above notation, we have $S\cap \cJ=\varnothing$.
\end{Claim}
\noindent
	Indeed, suppose, \textsl{ab absurdo}, that there exists $b\in\Td\sminus\cP_0$ and $c\in\TYc$ such that~$\hat{b}\simeq\hat{c}$ in~$\hTd$. Applying the left adjoint~$\eY\otimes-$ to completion, as in~\eqref{eq:recollements}, and using that $\eY\otimes[\eY,-]\cong\eY\otimes-$ and that $\eY\otimes c\cong c$ since $c\in\TYc$ we deduce that
	\[
		\eY\otimes b\simeq c
	\]
	in~$\cT$. In particular, $\eY\otimes b$ is dualizable and since $b$ is as well, the exact triangle $\eY\otimes b\to b\to \fY\otimes b\to \Sigma\eY\otimes b$ tells us that so is~$\fY\otimes b$. Now, the dualizable objects~$\eY\otimes b$ and~$\fY\otimes b$ tensor to zero so one of them must be zero since~$\Td$ is local. Both cases lead to a contradiction, using the assumption $b\notin\cP_0$ which reads $\cP_0\in\supp(b)$. First, if $\fY\otimes b=0$ then $\supp(b)=\supp(\eY\otimes b)\subseteq Y$ gives us $\cP_0\in\supp(b)\subseteq Y$ which contradicts our assumption on~$\cP_0$. Second, if $\eY\otimes b=0$ then $\supp(b)=\supp(\fY\otimes b)\subseteq Y^\complement$ and the relation $\cP_1\in\adhpt{\cP_0}\subseteq\supp(b)\subseteq Y^\complement$ contradicts our assumption on~$\cP_1$. This proves \Cref{claim:S-J}.

	By \Cref{Rec:brothers}\,\eqref{it:brother-1} for the tt-category~$\hTd$, the disjunction between the \mbox{$\otimes$-}multiplicative set~$S$ and the tt-ideal~$\cJ$ must be witnessed by a prime~$\cQ\in\Spc(\hTd)$, satisfying $S\cap \cQ=\varnothing$ and $\cJ\subseteq\cQ$. By the definition of~$S$ and~$\cJ$, this means that
	\begin{equation*}
	\label{eq:aux-8}%
		b\in\Td\sminus\cP_0\ \then \ \hat{b}\notin\cQ
		\qquadtext{and}
		c\in\TYc\ \then \ \hat{c}\in\cQ.
	\end{equation*}
	In other words, the image of~$\cQ$ under~$\varphi_Y=\Spc(\widehat{(-)})$, namely $\SET{d\in\Td}{\hat{d}\in\cQ}$, is contained in~$\cP_0$ and contains~$\TYc$. The former means that $\varphi_Y(\cQ)$ is a specialization of~$\cP_0$ and the latter means that $\varphi_Y(\cQ)$ does not belong to~$Y$.
\end{proof}

We now record the weak form of the Tate Intermediate Value Theorem:
\begin{Cor}
\label{Cor:TIV}%
	Let $Y\subseteq \SpcT$ be a Thomason subset and let $\cP_0,\cP_1\in \SpcT$ be two primes such that $\cP_1$ belongs to~$Y$ while $\cP_0$ lies outside of~$Y$. If $\cP_1$ belongs to the closure of~$\cP_0$ in~$\SpcT$ then there exists an `intermediate' point~$\cP_{0.5}$
	\[
		\cP_1\in\adhpt{\cP_{0.5}}\qquadtext{and} \cP_{0.5}\in \adhpt{\cP_0}
	\]
	that furthermore belongs to the support of the Tate ring: $\cP_{0.5}\in\Supp(\tY(\unit))$.
\end{Cor}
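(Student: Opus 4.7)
The plan is to derive the corollary directly from the Strong Form (\Cref{Thm:TIV}) combined with the identification $\Img(\varphi_Y) \cap Y^{\complement} = \Supp(\tY(\unit))$ recorded in \Cref{Cor:Spc(compl)}. Essentially all the work has been done; what remains is bookkeeping.

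First I would invoke \Cref{Thm:Spc(compl)-on-Y}: since $\cP_1 \in Y$, it has a unique preimage $\cQ_1 \in \Spc(\hTd)$ under the completion map $\varphi = \varphi_Y$. The hypotheses of \Cref{Thm:TIV} are then exactly what is assumed here, namely that $\cP_0$ is a generalization of $\cP_1$ lying outside $Y$. Applying the strong form yields a point $\cQ \in \Spc(\hTd)$ with $\cQ \notin \varphi\inv(Y)$ whose image under $\varphi$ satisfies $\varphi(\cQ) \in \adhpt{\cP_0}$ and $\cP_1 \in \adhpt{\varphi(\cQ)}$.

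Next I would set $\cP_{0.5} \coloneqq \varphi(\cQ)$. The two specialization requirements of the corollary are then immediate from the conclusion of \Cref{Thm:TIV}. To finish, it only remains to check that $\cP_{0.5}$ lies in the support of the Tate ring. Tautologically, $\cP_{0.5} \in \Img(\varphi)$; and the condition $\cQ \notin \varphi\inv(Y)$ translates to $\cP_{0.5} = \varphi(\cQ) \notin Y$. Therefore $\cP_{0.5} \in \Img(\varphi) \cap Y^{\complement}$, which equals $\Supp(\tY(\unit))$ by \Cref{Cor:Spc(compl)}.

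The main obstacle has already been overcome in proving \Cref{Thm:TIV}, where the local reduction and the application of \Cref{Rec:brothers}\,\eqref{it:brother-1} to the $\otimes$-multiplicative class $\{\hat{b} \mid b \notin \cP_0\}$ and the ideal $\{\hat{c} \mid c \in \TYc\}$ do the real work. The weak form stated here is essentially a repackaging that strips away the data in $\Spc(\hTd)$ and retains only what happens after pushing down along~$\varphi$, which is what justifies the \emph{intermediate value} terminology advertised in the introduction.
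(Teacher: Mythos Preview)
Your proof is correct and follows exactly the approach of the paper, which records the corollary as ``direct from \Cref{Thm:TIV} and \Cref{Cor:Spc(compl)}''; you have simply spelled out the bookkeeping that the paper leaves implicit.
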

\begin{proof}
	This is direct from \cref{Thm:TIV} and \cref{Cor:Spc(compl)}.
\end{proof}

\begin{Rem}\label{rem:immediate-generalization}
	We say that a point $x \in Y^{\complement}$ is an \emph{immediate generalization} of $Y$ if there exists $y \in Y$ such that $\gen(\{y\}) \cap \overbar{\{x\}} = \{x,y\}$.
\end{Rem}

\begin{Cor}\label{Cor:immediate-generalization}
	Every $x\in Y^\complement$ that is an immediate generalization of~$Y$ belongs to~$\Supp(\tY(\unit))$. In other words, $\Img(\Spc(\hTd)\to\SpcT)$ contains not only the subset~$Y$ but also all its immediate generalizations.
\end{Cor}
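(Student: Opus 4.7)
The plan is to apply the weak form of the Tate Intermediate Value Theorem (\cref{Cor:TIV}) with the given $x$ playing the role of $\cP_0$ and a suitable $y \in Y$ witnessing the immediate generalization playing the role of $\cP_1$. The hypothesis that $x$ is an immediate generalization of $Y$ says exactly that there is $y \in Y$ with $\gen(\{y\}) \cap \adhpt{x} = \{x,y\}$, so in particular $y \in \adhpt{x}$, which is the specialization hypothesis needed to invoke \cref{Cor:TIV}.

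By \cref{Cor:TIV} applied to this pair $\cP_0 = x$ and $\cP_1 = y$, we obtain an intermediate point $\cP_{0.5}$ satisfying $\cP_{0.5}\in\adhpt{x}$, $y\in\adhpt{\cP_{0.5}}$, and $\cP_{0.5}\in\Supp(\tY(\unit))$. The third condition together with \cref{Cor:Spc(compl)} (which says $\Supp(\tY(\unit)) \subseteq Y^{\complement}$) forces $\cP_{0.5} \notin Y$; in particular $\cP_{0.5} \neq y$. On the other hand, the first two conditions mean precisely that $\cP_{0.5} \in \adhpt{x} \cap \gen(\{y\})$. By the immediate generalization hypothesis this intersection is $\{x,y\}$, so we conclude $\cP_{0.5} = x$, and hence $x \in \Supp(\tY(\unit))$, which is the main claim.

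The second sentence of the corollary then follows by combining this with \cref{Thm:Spc(compl)-on-Y}, which gives the inclusion $Y \subseteq \Img(\varphi)$ (since $\varphi$ is even a homeomorphism above $Y$), and with \cref{Cor:Spc(compl)}, which identifies $\Img(\varphi) \cap Y^{\complement}$ with $\Supp(\tY(\unit))$; what we just showed is that this latter set contains every immediate generalization of~$Y$.

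There is no serious obstacle here, as the statement is essentially a clean consequence of \cref{Cor:TIV} once one unwinds the definition of immediate generalization. The only thing to be careful about is keeping track of which side of $Y$ the intermediate point lies on, in order to rule out $\cP_{0.5} = y$ and force $\cP_{0.5} = x$; this is handled by the $U$-locality of the Tate ring recorded in \cref{Cor:Spc(compl)}.
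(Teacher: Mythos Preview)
Your proof is correct and follows essentially the same approach as the paper: apply \Cref{Cor:TIV} with $\cP_0=x$ and $\cP_1=y$, then use the immediate generalization hypothesis to force the intermediate point to equal~$x$ after ruling out~$y$. The paper's version is terser and leaves the exclusion $\cP_{0.5}\neq y$ implicit (it follows already from the $U$-locality of $\tY(\unit)$ noted in \Cref{Def:Tate}), whereas you spell this out via \Cref{Cor:Spc(compl)}; either justification is fine.
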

\begin{proof}
	Let $y\in Y$ such that $x\leadsto y$ and~$\gen(\{y\})\cap\adhpt{x}=\{x,y\}$. This means that $x\leadsto z\leadsto y$ forces $z=x$ or~$z=y$. The result now follows from \Cref{Cor:TIV}.
\end{proof}

We just proved that $\Img(\varphi)$ is bounded below by $Y$ and its immediate generalizations. On the other hand, here is a statement about upper bounds for $\Img(\varphi)$.
\begin{Prop}\label{prop:uniformly-nilpotent}
	Let $\cat T$ be a rigidly-compactly generated tt-category and let
	\[\varphi \colon \Spc(\hTd) \to \Spc(\Td)\]
	be the map induced by completion with respect to the Thomason subset $Y\subseteq \Spc(\Td)$. Let $c \in \Td$ and consider the exact triangle
	\begin{equation}\label{eq:nil-fiber-seq}
		w_c \xrightarrow{\xi_c} \unit \xrightarrow{\mathrm{coev}} c \otimes c^{\vee} \to \Sigma w_c.
	\end{equation}
	The following are equivalent:
	\begin{enumerate}[\rm(i)]
		\item\label{it:unif-1} The image of $\varphi$ is contained in $\supp(c)$.
		\item\label{it:unif-2} The map $\xi_c$ is nilpotent on $\eY$: there exists $n \ge 1$ such that $\xi_c^{\otimes n} \otimes \eY=0$.
	\end{enumerate}
\end{Prop}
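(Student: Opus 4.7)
The plan is to express (i) and (ii) as vanishing conditions on the same family of morphisms $\xi_c^{\otimes n}$ tensored with two related objects ($\eY$ on one hand, $[\eY,\unit]$ on the other), and bridge them using the norm triangle \eqref{eq:norm-exact-triangle} together with the $\TYp\perp\TYpp$ orthogonality.

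First, we translate (i) into an internal statement in $\hTd$. By the very definition $\varphi(\cQ)=\{d\in\Td\mid\hat d\in\cQ\}$, we have $\varphi(\cQ)\in\supp(c)$ if and only if $\hat c\notin\cQ$; hence (i) amounts to $\supp_{\hTd}(\hat c)=\Spc(\hTd)$. Since $\hTd$ is an essentially small rigid tt-category (\Cref{Prop:dualizables}), the standard tt-nilpotence characterization of the tt-ideal generated by a single object (the ``usual arguments of \cite{Balmer10b}'' invoked in the proof of \Cref{Thm:Img=Supp(A)}) converts this into the existence of $n\geq 1$ with $\xi_{\hat c}^{\otimes n}=0$ in $\hTd$. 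As completion is a tt-functor preserving duals and the triangle \eqref{eq:nil-fiber-seq}, $\xi_{\hat c}=\widehat{\xi_c}$, and by \Cref{Prop:pseudo-proj-formula} the resulting reformulation of (i) is: there exists $n\geq 1$ with $[\eY,\unit]\otimes\xi_c^{\otimes n}=0$ in~$\cT$.

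We now compare this with (ii). One direction is immediate: tensoring $[\eY,\unit]\otimes\xi_c^{\otimes n}=0$ with $\eY$ and using $\eY\otimes[\eY,\unit]\simeq\eY$ (as cited in the proof of \Cref{Thm:Tate-excision}) yields (ii). For the converse, assume (ii) and consider the morphism of exact triangles obtained by tensoring the norm triangle $\eY\to[\eY,\unit]\to\tY(\unit)\to\Sigma\eY$ (\eqref{eq:norm-exact-triangle} for $t=\unit$) with $\xi_c^{\otimes n}\colon w_c^{\otimes n}\to\unit$. The left vertical $\eY\otimes\xi_c^{\otimes n}$ is zero by hypothesis, so the middle vertical $[\eY,\unit]\otimes\xi_c^{\otimes n}$ annihilates the image of $\eY\otimes w_c^{\otimes n}\to[\eY,\unit]\otimes w_c^{\otimes n}$ and therefore factors through the cofiber, giving a map $g\colon\tY(\unit)\otimes w_c^{\otimes n}\to[\eY,\unit]$. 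The key observation closing the argument is that $g=0$ automatically: its source $\tY(\unit)\otimes w_c^{\otimes n}$ is $U$-local, since $\tY(\unit)\in\TYp$ (\Cref{Def:Tate}) and $\TYp$ is a $\otimes$-ideal in~$\cT$; while its target $[\eY,\unit]$ is $Y$-complete, i.e., in~$\TYpp$. By the semi-orthogonal decomposition $\cT=\langle\TYp\perp\TYpp\rangle$ of \eqref{eq:decompositions}, $\Hom(\TYp,\TYpp)=0$, forcing $g=0$ and hence $[\eY,\unit]\otimes\xi_c^{\otimes n}=0$ with the very same~$n$. The main obstacle is precisely this converse step: a priori one might expect that passing from nilpotence on~$\eY$ to nilpotence on the larger $[\eY,\unit]$ would require raising the exponent, but the $\TYp\perp\TYpp$ orthogonality combined with the $U$-locality of the Tate ring shows the two conditions are equivalent with no loss.
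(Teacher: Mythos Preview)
Your proof is correct and takes a genuinely different route from the paper. The paper never reformulates~(i) as the vanishing $[\eY,\unit]\otimes\xi_c^{\otimes n}=0$; instead it works with the thick ideal $\Nil_{\cT}(\xi_c)=\langle c\rangle_{\cT}$ of the big category. For (ii)$\Rightarrow$(i) the paper observes that $\eY$ is a summand of $\eY\otimes\cone(\xi_c^{\otimes n})$ and then completes; for (i)$\Rightarrow$(ii) it argues that $\hat\unit\in\langle\hat c\rangle_{\hTd}$ forces $\eY$ into a thick ideal built from $\eY\otimes c\otimes(-)$, hence into $\Nil_{\cT}(\xi_c)$. Your argument is more direct and in one respect sharper: your use of the norm triangle together with the orthogonality $\Hom(\TYp,\TYpp)=0$ shows that the \emph{same} exponent~$n$ witnesses both conditions, whereas the paper's passage through thick ideals does not keep track of~$n$. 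The paper's approach, on the other hand, avoids invoking \Cref{Prop:pseudo-proj-formula} and the norm triangle, relying only on the general machinery of $\Nil_{\cT}(\xi_c)$ from~\cite{Balmer18}.
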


\begin{proof}
	First recall from~\cite[Proposition 2.10]{Balmer18} that the collection $\Nil_{\cT}(\xi_c)$ of objects of~$\cT$ on which $\xi_c$ is nilpotent forms a thick ideal of the big category~$\cat T$:
	\[
		\Nil_{\cT}(\xi_c) = \langle c \rangle_{\cT} = \langle \cone(\xi_c) \rangle_{\cT} = \langle \cone(\xi_c^{\otimes n})\rangle_{\cT}
	\]
	for all $n \ge 1$. Next recall that $\varphi^{-1}(\supp(d)) = \supp(\hat{d})$ for any $d \in \Td$. This immediately implies that $\Img(\varphi) \subseteq \supp(d)$ if and only if $\hat{d}\in \TYc$ is fully supported, $\supp(\hat{d}) = \Spc(\hTd)$, which is the case if and only if $\hat{\unit}$ belongs to the thick ideal of $\hTd$ generated by $\hat{d}$. If \eqref{it:unif-2} holds then $\eY$ is a direct summand of $\eY \otimes \cone(\xi_c^{\otimes n})$ for some $n \ge 1$. Setting $d\coloneqq \cone(\xi_c^{\otimes n})$, it follows that $\hat{\unit}$ is a direct summand of $\hat{d}$ in~$\hTd$. Hence $\supp(\hat{d}) = \Spc(\hTd)$ so that $\Img \varphi \subseteq \supp(d)$. As noted above $\langle d\rangle = \langle c \rangle$ which gives~\eqref{it:unif-1}. Conversely, if \eqref{it:unif-1} holds then $\hat{\unit}$ is in the thick ideal generated by $\hat{c}$. This implies that in $\cat T$ we have $\eY \in \Thick\langle \eY \otimes c \otimes \TYc \rangle$. Since $\Nil_\cT(\xi_c)$ is a thick ideal and contains $c$, it must therefore contain $\eY$, which gives~\eqref{it:unif-2}.
\end{proof}

\begin{Cor}\label{cor:nil-ring}
	Let $R$ be a commutative ring which contains two elements $a,b \in R$ such that $a^n$ divides $b$ for all $n \ge 1$. Let $\cat T=\Der(R)$ and $Y=V(a)=\supp(\cone(a))$. Then the image of $\varphi\colon \Spc(\hTd)\to\Spc(\Td)$ is contained in $V(b)$.
\end{Cor}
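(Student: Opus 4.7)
The plan is to apply \cref{prop:uniformly-nilpotent} with the choice $c \coloneqq \cone(b) \in \Td$. Since $\supp(\cone(b)) = V(b)$, the desired conclusion is exactly condition~\eqref{it:unif-1} for this $c$, so it suffices to verify the equivalent condition~\eqref{it:unif-2}: $\xi_c^{\otimes n} \otimes \eY = 0$ for some $n \ge 1$. By the identification $\Nil_\cT(\xi_c) = \langle c \rangle_\cT$ used in the proof of that proposition, this reduces further to showing $\eY \in \langle \cone(b) \rangle_\cT$ in $\cT = \Der(R)$.

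The key step will be to establish that multiplication by $b$ acts as the zero endomorphism on $\eY$ in $\Der(R)$. Once we have $b \cdot \mathrm{id}_{\eY} = 0$, the distinguished triangle $\eY \xrightarrow{b} \eY \to \cone(b) \otimes \eY \to \Sigma \eY$ splits, yielding $\cone(b) \otimes \eY \simeq \eY \oplus \Sigma \eY$. In particular $\eY$ is a direct summand of $\cone(b) \otimes \eY \in \langle \cone(b)\rangle_\cT$, which is all that is needed.

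To see that $b \cdot \mathrm{id}_{\eY} = 0$, one uses the presentation $\eY \simeq \mathrm{hocolim}_n \Sigma^{-1}\cone(a^n: R \to R)$ coming from $\eY \simeq \mathrm{fib}(R \to R[1/a])$. The recollement gives $\mathrm{Hom}_{\Der(R)}(\eY, R[1/a]) = 0$, hence $\mathrm{End}_{\Der(R)}(\eY) \cong \mathrm{Hom}_{\Der(R)}(\eY, R)$; a Milnor-sequence computation on the hocolim presentation then exhibits the principal summand of $\mathrm{Hom}^0(\eY, R)$ as the classical $a$-adic completion $\lim_n R/(a^n)$ with natural surjections as transition maps. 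The hypothesis $b \in \bigcap_n (a^n)$ forces the image of $b$ to vanish in every $R/(a^n)$, and therefore in $\lim_n R/(a^n)$; this pins down $b \cdot \mathrm{id}_{\eY}$ as the zero map, modulo a $\lim^1$-contribution $\lim^1_n \mathrm{Ann}_R(a^n)$ that consists of phantom endomorphisms.

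The main obstacle will be controlling this residual $\lim^1$-term in the non-noetherian case: in the noetherian case Artin--Rees gives Mittag--Leffler, so $\lim^1 = 0$ and the argument is transparent. In general, the residual phantom contribution is absorbed either by a standard nilpotence-of-phantoms argument or by passing to a sufficiently high power $b^N$ (which still lies in $\bigcap_n (a^n)$), the weaker conclusion $b^N \cdot \mathrm{id}_{\eY} = 0$ still sufficing to verify~\eqref{it:unif-2}.
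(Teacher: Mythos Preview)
Your overall strategy matches the paper's: apply \cref{prop:uniformly-nilpotent} with $c=\cone(b)$ and reduce to showing that (a power of) $b$ annihilates~$\eY$, using the presentation $\eY\simeq\hocolim_n\,\cone(a^n)[-1]$. The difference lies in how this last point is established.

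The paper avoids computing $\End(\eY)$ altogether. It observes directly that $b\cdot\id_{\cone(a^n)}=0$ for every~$n$: writing $b=a^n c_n$, multiplication by~$c_n$ is a null-homotopy of $b$ on the two-term complex $[R\xrightarrow{a^n}R]$. Hence $b$ is zero on $\coprod_n\cone(a^n)[\ast]$, and the hocolim triangle then forces $b^2\cdot\id_{\eY}=0$ (or $b\cdot\id_{\eY}=0$ if one works at the chain level). Finally, since $\xi_c\otimes\cone(b)=0$, the map $\xi_c$ factors through~$b$, so $\xi_c\otimes\eY=0$ already with $n=1$ (or $n=2$).

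Your Milnor-sequence computation is correct up to the point where it shows that $b\cdot\id_{\eY}$ lies in the $\lim^1$-kernel, i.e.\ is phantom. The weak spot is the next sentence: neither ``standard nilpotence of phantoms'' nor ``pass to $b^N$'' is a self-contained justification here---phantom endomorphisms of a sequential hocolim need not be nilpotent in general, and nothing you have said distinguishes $b^N$ from~$b$. What actually saves you is precisely the paper's observation: because multiplication by~$b$ is a \emph{central} endomorphism, it gives a morphism of the hocolim triangle $\coprod K_n\to\coprod K_n\to\eY$ which is zero on the two outer terms; the standard argument (factor $b_{\eY}$ through the connecting map on one side and through $\coprod K_n\to\eY$ on the other, then compose) yields $b_{\eY}^2=0$. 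So $N=2$ works, but via this triangle argument rather than any general phantom principle. Once you make that step explicit, your proof is essentially the paper's, just approached through $\End(\eY)$ rather than directly.
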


\begin{proof}
	The idempotent~$\ee_{V(a)}\in\Der(R)$ is well-known to be the homotopy colimit of the following sequence of maps $\cone(a^n)[-1]\to \cone(a^{n+1})[-1]$, for $n\ge0$:
	\[
	\xymatrix@R=1em{
		\cone(a^n)[-1] \ar[d]
		&& \cdots \ar[r]
		& 0 \ar[r]
		& R \ar[r]^-{a^n} \ar@{=}[d]
		& R \ar[r] \ar[d]^-{a}
		& 0 \ar[r]
		& \cdots
		\\
		\cone(a^{n+1})[-1]
		&& \cdots \ar[r]
		& 0 \ar[r]
		& R \ar[r]^-{a^{n+1}}
		& R \ar[r]
		& 0 \ar[r]
		& \cdots
	}
	\]
	In other words, $\eY$ for $Y=V(a)$ fits in an exact triangle
	\[
		\coprod_{n\in\bbN}\cone(a^n)[-1] \to
		\coprod_{n\in\bbN}\cone(a^n)[-1] \to
		\eY\to
		\coprod_{n\in\bbN}\cone(a^n).
	\]
	Since $b$ divides every~$a^n$, it is easy to see that $b\otimes\id_{\cone(a^n)}$ is zero on each summand~$\cone(a^n)[\ast]$ and therefore on~$\coprod_{n\in\bbN}\cone(a^n)[\ast]$ as well. It follows that $b\otimes\eY$ is zero (thanks to the model, or we can conclude that it squares to zero just using the above triangle). Setting $c \coloneqq \cone(b\colon \unit\to \unit)$, in the notation of~\eqref{eq:nil-fiber-seq} we have $\xi_c \otimes c=0$, which implies that $\xi_c:w_c\to \unit$ factors through $b:\unit \to \unit$. Hence, $\xi_c\otimes\eY$ is zero and we conclude by \cref{prop:uniformly-nilpotent} \eqref{it:unif-2}$\then$\eqref{it:unif-1}.
\end{proof}

\begin{Exa}\label{exa:valuation-domain}
	Let $R$ be the valuation domain with valuation~$\nu\colon R \to \mathbb{Z}^2\cup\{\infty\}$ where $\bbZ^2$ has the lexicographic order; see~\cite[Chapter II]{FuchsSalce01}, for example. The spectrum of~$R$ consists of three points: the zero prime ideal $0$, a middle prime ideal $\mathfrak p=\SET{{r\in R}}{\nu(r)\in(\bbZ_{>0}\times\bbZ)\cup\{\infty\}}$ and a principal maximal ideal~$\mathfrak m=\SET{{r\in R}}{{\nu(r)>(0,0)}}$. Let~$a$ be a generator of the maximal ideal, which has valuation $(0,1)$, and let $b$ be any non-zero element of the middle prime, for example, an element which has valuation~$(1,0)$. Then the map on completion with respect to~$Y=V(a)=\{\gm\}$ is the inclusion of the two top points:
	\begin{equation}\label{eq:val-domain}
\begin{tikzpicture}[baseline={([yshift=-1.4ex]current  bounding  box.center)}]
	\coordinate (base) at (0,0);
	\coordinate (start) at ($(base)+(-2*0.5,0)$);
	\node at ($(start)+(0,0.5)$) {$\Spc(\hTd)$};
	\draw (start) -- ++($(0,-0.55)$);
	\filldraw ($(start)$) circle (0.06cm);	
	\filldraw ($(start)+(0,-0.55)$) circle (0.06cm);

		\draw[right hook->] ($(base)+(-0.5,-0.5*0.55)$) --
		($(base)+(0.5,-0.5*0.55)$);

	\coordinate (start) at ($(base)+(2*0.5,0)$);
	\node at ($(start)+(0,0.5)$) {$\Spc(\Td)$};
	\node at ($(start)+(1.25,0)$) {$Y$};
	\node at ($(start)+(1.25,-0.55)$) {$\Supp(\tY)$};
	\draw[fill=yellow!60] (start) circle (3*0.06cm);
	\draw[fill=cyan] ($(start)+(0,-0.55)$) circle (3*0.06cm);
	\draw (start) -- ++($(0,-2*0.55)$);
	\filldraw ($(start)$) circle (0.06cm);	
	\filldraw ($(start)+(0,-0.55)$) circle (0.06cm);
	\filldraw ($(start)+(0,-2*0.55)$) circle (0.06cm);
\end{tikzpicture}
	\end{equation}

	\smallskip
	\noindent Indeed, $a^n \mid b$ for all $n \ge 1$ since $\nu(a^n)=(0,n) \le (1,0)=\nu(b)$. Hence, \cref{cor:nil-ring} implies that the image of~$\varphi_{V(a)}$ is contained in $V(b)$ which does not contain the generic point since $b$ is not nilpotent. Moreover, $\Supp(\tY(\unit))$ must contain the middle point by \cref{Cor:immediate-generalization}. On the other hand, the $\mathfrak m$-adic completion~$\hat{R}_{\mathfrak m}$ is a discrete valuation ring by~\cite[Theorem 7]{KangPark99} and~\cite[\href{https://stacks.math.columbia.edu/tag/05GH}{Lemma 05GH}]{stacks-project}. Finally, since the maximal ideal $\mathfrak m=(a)$ is Koszul-complete, \eqref{eq:val-domain} can be identified with the map $\Spec(\hat{R}_{\mathfrak m}) \to \Spec(R)$ induced by classical $\mathfrak m$-adic completion (\cref{Exa:hatR_I}). It follows that the map~$\varphi_Y$ is indeed as depicted above.
\end{Exa}

\section{The gluing of completion and localization}
\label{sec:topology}

The Tate Intermediate Value Theorem of the previous section shows how the topological interaction between a Thomason subset and its complement is mediated through the support of the associated Tate ring. From a slightly different point of view, the theorem enables us to completely determine the inclusions among the primes of $\Td$ provided we understand such inclusions for the completion $\smash{\hTd}$ along~$Y$ and for the localization $\smash{\TU\dd}$ onto its complement~$U=Y^\complement$. In particular, keeping the notation of \cref{sec:recollement}, we have:

\begin{Cor}\label{cor:recover-specializations}
	A subset $V \subseteq \Spc(\Td)$ is specialization-closed if and only if the following two conditions are satisfied:
	\begin{enumerate}[\rm(i)]
		\item $V \cap U$ is a specialization-closed subset of $U \cong \Spc(\TUd)$ and
		\item $\varphi^{-1}(V)$ is a specialization-closed subset of $\Spc(\hTd)$.
	\end{enumerate}
\end{Cor}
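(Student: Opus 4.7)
The plan is to prove both implications by translating everything to specializations and using the Tate Intermediate Value Theorem (\Cref{Thm:TIV}) exactly when we need to cross from $U$ into $Y$.

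For the forward direction, suppose $V$ is specialization-closed in~$\SpcT$. Since $U=Y^\complement$ is generalization-closed and specializations in the subspace~$U$ are inherited from~$\SpcT$, the subset $V\cap U$ is specialization-closed in~$U$. For~(ii), recall that a continuous map between topological spaces preserves specializations, so the preimage of a specialization-closed set under the continuous map~$\varphi$ is specialization-closed.

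For the converse, assume~(i) and~(ii) and pick $x\in V$ together with a specialization $x\leadsto y$ in~$\SpcT$. We must show $y\in V$, which we do by cases on the location of~$x$ and~$y$ with respect to the partition~$\SpcT=U\sqcup Y$. First, since $Y$ is Thomason, it is specialization-closed, so the case $x\in Y$ forces $y\in Y$; then by \Cref{Thm:Spc(compl)-on-Y} there are unique preimages $\tilde x,\tilde y\in\varphi\inv(Y)$ with $\varphi(\tilde x)=x$ and $\varphi(\tilde y)=y$, and $\tilde x\leadsto\tilde y$ (the restriction $\varphi\inv(Y)\isoto Y$ is a homeomorphism). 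Now $\tilde x\in\varphi\inv(V)$, so by~(ii) $\tilde y\in\varphi\inv(V)$, hence $y\in V$. If instead $x,y\in U$, then $x\leadsto y$ holds in~$U\cong\Spc(\TUd)$, and~(i) directly gives~$y\in V$.

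The main obstacle, and the only case where the mere knowledge of~$U$ and~$\Spc(\hTd)$ could fail to suffice, is $x\in U$ and $y\in Y$. This is precisely what \Cref{Thm:TIV} handles: applied with $\cP_0=x$ and $\cP_1=y$, it supplies a point $\cQ\in\Spc(\hTd)$ lying outside $\varphi\inv(Y)$ such that $\cQ_1\in\adhpt{\cQ}$ (where $\cQ_1$ is the unique preimage of $y$) and such that $z\coloneqq\varphi(\cQ)\in U$ satisfies $x\leadsto z\leadsto y$ in~$\SpcT$. The specialization $x\leadsto z$ lies entirely in~$U$, so by~(i) we get $z\in V$, i.e., $\cQ\in\varphi\inv(V)$. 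Then by~(ii) applied to the specialization $\cQ\leadsto\cQ_1$ we obtain $\cQ_1\in\varphi\inv(V)$, hence $y=\varphi(\cQ_1)\in V$, which completes the proof.
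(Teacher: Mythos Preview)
Your proof is correct and follows essentially the same approach as the paper's: both directions are handled identically, with the converse split into the same three cases and the crossing case $x\in U$, $y\in Y$ resolved by invoking the strong form of the Tate Intermediate Value Theorem (\Cref{Thm:TIV}) to produce the intermediate point $\cQ$ in~$\Spc(\hTd)$, then applying~(i) followed by~(ii).
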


\begin{proof}
	The ($\Rightarrow$) direction is immediate since the preimage of any specialization-closed subset along any continuous map remains specialization-closed. To establish the ($\Leftarrow$) direction, suppose $x \in V$ and $x \leadsto y$. We need to prove $y \in V$. If $y \not\in Y$ (hence also $x \not\in Y$) then this follows from (i). On the other hand, if $x \in Y$ (hence also $y \in Y$), then since $\varphi^{-1}(Y) \to Y$ is a homeomorphism (\cref{Thm:Spc(compl)-on-Y}), we have $x' \leadsto y'$ for the unique points $x',y' \in \smash{\Spc(\hTd)}$ with $\varphi(x')=x$ and $\varphi(y')=y$. Since $x' \in \varphi^{-1}(V)$, (ii) implies $y' \in \varphi^{-1}(V)$ so that $y \in V$. It remains to consider the case where $y \in Y$ and $x \not\in Y$, which is the setup of the Tate Intermediate Value Theorem. So $y=\varphi(y')$ for a unique $y'\in \Spc(\hTd)$. By \cref{Thm:TIV}, there exists $z' \in \Spc(\hTd)$ such that $z' \leadsto y'$, $\varphi(z') \not\in Y$ and $x \leadsto \varphi(z') \leadsto y$. By (i), we have $\varphi(z') \in V$. Hence $z' \in \varphi^{-1}(V)$. Thus (ii) implies $y' \in \varphi^{-1}(V)$ so that $y \in V$, as desired.
\end{proof}

\begin{Rem}
	If the spectrum $\Spc(\Td)$ is noetherian then its topology is completely determined by the inclusions among primes, but in general this need not be the case. For example, a non-noetherian absolutely flat ring, such as an infinite product of fields, has trivial specialization order (it has dimension zero) but its spectrum is not discrete.
\end{Rem}

\begin{Rem}
\label{Rem:Eilvis}%
	In general, the topology of a spectral space $X$ such as $X=\Spc(\Td)$ is completely determined by the specializations among its points \emph{together with} the associated constructible topology on $X$. This is formalized by the isomorphism between the category of spectral spaces and the category of so-called Priestley spaces; see~\cite[Theorem~1.5.4]{DickmannSchwartzTressl19}. The closed subsets of $X$ are precisely the subsets which are closed in the constructible topology (\aka proconstructible) and specialization-closed.
\end{Rem}

\begin{Cor}\label{cor:closed-determined}
	A subset $Z \subseteq \Spc(\Td)$ is closed if and only if the following two conditions are satisfied:
	\begin{enumerate}[\rm(i)]
		\item $Z \cap U$ is a closed subset of $U\cong \Spc(\TUd)$ and
		\item $\varphi^{-1}(Z)$ is a closed subset of $\Spc(\hTd)$.
	\end{enumerate}
\end{Cor}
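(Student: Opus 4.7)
The forward direction is immediate: continuity of the inclusion $\iota\colon U\hookrightarrow\SpcT$ and of $\varphi$ guarantees that the preimages of any closed $Z$ along these maps are again closed. The substance lies in the converse, which I would prove by combining \cref{cor:recover-specializations} with a separate proconstructibility argument, invoking the characterization in \Cref{Rem:Eilvis}: a subset of a spectral space is closed if and only if it is both proconstructible and specialization-closed. Since hypotheses (i) and (ii) say that $Z\cap U$ and $\varphi\inv(Z)$ are closed, hence specialization-closed, \cref{cor:recover-specializations} directly yields that $Z$ is specialization-closed.

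It then remains to show that $Z$ is proconstructible. My plan is to package $\iota$ and $\varphi$ into a single surjective spectral map
\[
\psi\colon U\sqcup\Spc(\hTd)\longrightarrow\SpcT
\]
whose restriction to the first summand is $\iota$ and to the second is $\varphi$. The source is a spectral space (a finite disjoint union of spectral spaces) and $\psi$ is a spectral map because each of its restrictions is. Surjectivity follows from the decomposition $\SpcT=U\cup Y$ together with the inclusion $Y\subseteq\Img(\varphi)$ guaranteed by \Cref{Thm:Spc(compl)-on-Y}. Passing to constructible topologies turns $\psi$ into a continuous surjection between compact Hausdorff spaces, hence into a topological quotient map. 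Consequently, a subset $Z\subseteq\SpcT$ is proconstructible if and only if $\psi\inv(Z)=(Z\cap U)\sqcup\varphi\inv(Z)$ is proconstructible in the source; and the latter is immediate from (i) and (ii), since closed subsets are \emph{a fortiori} proconstructible.

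The main technical step is the middle one, namely transferring proconstructibility across~$\psi$. This is essentially the only place where more is required than the direct analogue of \cref{cor:recover-specializations}, and it rests on the standard fact that a continuous surjection between compact Hausdorff spaces is automatically a quotient map, applied once the compatibility between spectral maps and their constructible topologies has been invoked.
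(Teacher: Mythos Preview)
Your proof is correct and reaches the same conclusion via the same overall strategy (specialization-closed via \cref{cor:recover-specializations}, then proconstructible via \Cref{Rem:Eilvis}), but your proconstructibility argument is packaged differently from the paper's.

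The paper argues piecewise: it observes that $U$ is proconstructible in $\SpcT$, so by \cite[Theorem~2.1.3]{DickmannSchwartzTressl19} the closed subset $Z\cap U$ of $U$ is proconstructible in $\SpcT$; separately, $Z\cap\Img(\varphi)=\varphi(\varphi^{-1}(Z))$ is proconstructible as the image of a proconstructible set under a spectral map; finally $Z=(Z\cap\Img(\varphi))\cup(Z\cap U)$ because $\Img(\varphi)\cup U=\SpcT$. Your approach instead bundles the inclusion and $\varphi$ into a single surjective spectral map $\psi$ and invokes the fact that in the constructible topology this becomes a closed (hence quotient) surjection between compact Hausdorff spaces. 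Both arguments rest on the same ingredients (proconstructibility of $U$, spectrality of the maps, surjectivity of the combined map), but yours is more conceptual and has the pleasant feature of directly anticipating the pushout description in \Cref{Cor:pushout}: the constructible topology on $\SpcT$ is literally the quotient of that on $U\sqcup\Spc(\hTd)$. The paper's approach, by contrast, is slightly more hands-on and avoids the compact-Hausdorff-quotient fact. One small point worth making explicit in your write-up is that $\iota$ is spectral because it is $\Spc$ of the localization tt-functor $\Td\to\TUd$ (equivalently, because $U$ is proconstructible); you use this but do not say it.
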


\begin{proof}
	The $(\Rightarrow)$ direction is immediate. For the ($\Leftarrow$) direction, \Cref{cor:recover-specializations} implies that $Z$ is specialization-closed. By \Cref{Rem:Eilvis}, it remains to prove that $Z$ is proconstructible. Note that the complement $U=Y^\complement$ is a proconstructible subset of $X\coloneqq \Spc(\Td)$ (for example, because it is closed in the Hochster dual topology). Hence by~\cite[Theorem 2.1.3]{DickmannSchwartzTressl19}, (i) implies that $Z \cap U = W \cap U$ for some proconstructible subset $W \subseteq X$. It follows that $Z\cap U$ is a proconstructible subset of $X$ being the intersection of two proconstructible sets. On the other hand, (ii) implies that $Z \cap \Img(\varphi) = \varphi(\varphi^{-1}(Z))$ is proconstructible since it is the image of a proconstructible set under a spectral map; see~\cite[Corollary 1.3.23]{DickmannSchwartzTressl19}. Hence $Z = Z \cap (\Img(\varphi) \cup U) = (Z \cap \Img(\varphi)) \cup (Z \cap U)$ is proconstructible, as desired.
\end{proof}

\begin{Cor}
\label{Cor:pushout}%
	Let $V\coloneqq\varphi\inv(U)\subset\Spc(\hTd)$ be the preimage of the complement of~$Y$ in the completion along~$Y$. The commutative square
	\[
	\xymatrix{
		\SpcT & \Spc(\hTd) \ar[l]_-{\varphi}
		\\
		U \vbuff \ar@{^(->}[u]
		& V=\varphi\inv(U) \vbuff \ar@{^(->}[u] \ar[l]^-{\varphi\restr{V}}
	}
	\]
	is a pushout of topological spaces.
\end{Cor}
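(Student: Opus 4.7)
The plan is to exhibit the canonical continuous map $\psi\colon U \sqcup_V \Spc(\hTd) \to \SpcT$ induced by the commuting square and show it is a bijection of underlying sets, after which the characterization of closed subsets from \Cref{cor:closed-determined} will immediately give that $\psi$ is a homeomorphism. For surjectivity of $\psi$, I would use the partition $\SpcT = U \sqcup Y$ together with \Cref{Thm:Spc(compl)-on-Y}, which guarantees that every point of $Y$ lies in the image of $\varphi$. For injectivity, the only non-trivial case is two points $q_1,q_2\in\Spc(\hTd)$ with $\varphi(q_1)=\varphi(q_2)$: if this common image lies in $Y$ then $q_1=q_2$ by the injectivity part of \Cref{Thm:Spc(compl)-on-Y}; if it lies in $U$ then both $q_i$ belong to $V=\varphi\inv(U)$ and are identified already in the pushout via the chain $q_1\sim\varphi(q_1)=\varphi(q_2)\sim q_2$.

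Once $\psi$ is a continuous bijection, the pushout topology on the source is, by definition, the final topology: a subset is closed if and only if its preimages in $U$ and in $\Spc(\hTd)$ are both closed. Transporting this across $\psi$, what remains to check is that a subset $Z\subseteq\SpcT$ is closed if and only if $Z\cap U$ is closed in $U$ and $\varphi\inv(Z)$ is closed in $\Spc(\hTd)$. But this is exactly the statement of \Cref{cor:closed-determined}, so $\psi$ is a homeomorphism and $\SpcT$ is the claimed pushout.

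In this formulation the main work has already been done upstream: the principal obstacle was the characterization of closed subsets of $\SpcT$ in terms of the two pieces $U$ and $\Spc(\hTd)$, and this rested essentially on the Tate Intermediate Value Theorem through \Cref{cor:recover-specializations} for the specialization-closed part, combined with standard proconstructibility arguments from spectral space theory for the constructible part. Given that groundwork, the present corollary is essentially a formal repackaging, and no further genuinely new input is required.
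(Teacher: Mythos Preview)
Your proof is correct and follows essentially the same approach as the paper's: the paper's two-line argument invokes \Cref{Thm:Spc(compl)-on-Y} to get the set-level pushout and then \Cref{cor:closed-determined} for the topology, which is exactly what you have unpacked in detail.
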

\begin{proof}
	It follows from \cref{Thm:Spc(compl)-on-Y} that it is a pushout of sets and \cref{cor:closed-determined} then establishes that it is a pushout of topological spaces.
\end{proof}
\begin{Rem}
	The commutative square of \Cref{Cor:pushout} is also a pushout in the category of spectral spaces and spectral maps (\ie continuous maps for which the preimage of a quasi-compact open remains quasi-compact). This is a general fact about a \emph{finite} diagram of spectral spaces: If a spectral space $X$ is the colimit in topological spaces of a finite diagram~$X_i$ of spectral spaces and spectral maps, then~$X$ is also the colimit of the~$X_i$ in the category of spectral spaces. This is a straightforward exercise based on the fact that a subset of~$X$ is quasi-compact if its preimage in every~$X_i$ is quasi-compact.
\end{Rem}
\begin{Rem}
	The commutative square of \Cref{Cor:pushout} is the image under $\Spc(-)$ of the commutative square of tt-categories
	\[
	\xymatrix@C=4em{
		\Td \ar[d]_-{(-)\restr{U}} \ar[r]^-{\widehat{(-)}}
		& \hTd \ar[d]^-{(-)\restr{V}}
		\\
		\Td\restr{U} \ar[r]^-{\big(\widehat{(-)}\big)\restr{U}}
		& {{\hTd}}\restr{V}
	}
	\]
	where the bottom arrow is defined as the localization of the top one over~$U$ (not as a completion). This compares to the pullback in~\cite[Theorem~5.11]{NaumannPolRamzi24}.
\end{Rem}

\section{Examples}
\label{sec:examples}%

Let us review a broad range of examples, grouped into four subsections.

\subsection{Chromatic examples}

Recall that the category of spectra $\SH=\Ho(\Sp)$ has spectrum consisting of points $\cat C_{p,n} \coloneqq \SET{x \in \SHd}{K(p,n-1)_*(x)=0}$ ranging over the primes $p$ and `chromatic' integers $1 \le n \le \infty$.

\begin{Rem}\label{rem:supp-in-SH}
	The category of spectra $\SH$ satisfies the steel condition (\cref{Def:hcomp}) and the homological residue fields are provided by the Morava $K$-theories; see~\cite{BalmerCameron21}. It follows that for any weak ring $A$ in $\SH$ we have $\cat C_{p,n} \in \Supp(A)$ if and only if $K(p,n-1)_*(A)\neq 0$. The following is surely well-known:
\end{Rem}

\begin{Lem}\label{lem:finite-to-infinite}
	If $\cat C_{p,n} \in \Supp(A)$ for all finite $1 \le n < \infty$ then $\cat C_{p,\infty} \in \Supp(A)$.
\end{Lem}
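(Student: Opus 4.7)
The plan is to combine the steel condition for~$\SH$ (Balmer--Cameron) with the identification of the homological residue field at~$\cat C_{p,\infty}$. By \cref{rem:supp-in-SH}, for each prime~$\cat C_{p,n}$ and each weak ring~$A$, membership $\cat C_{p,n}\in\Supp(A)$ is equivalent to non-vanishing of the corresponding homological residue functor on~$A$. For finite $n\ge 1$ this functor is Morava $K$-theory $K(p,n-1)\wedge(-)$, and the key input from~\cite{BalmerCameron21} is that the residue field at $\cat C_{p,\infty}$ corresponds (under the Hopkins--Smith convention $K(p,\infty)=\HFp$) to the mod-$p$ homology functor $\HFp\wedge(-)$. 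Under this dictionary the lemma becomes: if $K(p,n)_*(A)\neq 0$ for every $0\le n<\infty$, then $\HFp\wedge A\neq 0$.

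I would argue contrapositively. Suppose $\HFp\wedge A=0$ and consider the $p$-completion~$\widehat{A}_p$. Since each Morava $K$-theory $K(p,n)$ for $n\ge 1$ is $p$-complete, the map $A\to\widehat{A}_p$ becomes an equivalence after smashing with $K(p,n)$; in particular the hypothesis $K(p,1)_*(A)\neq 0$ forces $\widehat{A}_p\neq 0$. On the other hand, $p$-completion agrees with Bousfield localization at~$\HFp$, so $\widehat{A}_p$ is automatically $\HFp$-local. Combined with $\HFp\wedge\widehat{A}_p=\HFp\wedge A=0$ (using that $\HFp$ is itself $p$-complete), we conclude that $\widehat{A}_p$ is simultaneously $\HFp$-local and $\HFp$-acyclic, hence $\widehat{A}_p=0$, yielding the desired contradiction.

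The hard part is the identification of the residue field at $\cat C_{p,\infty}$: that input comes from the detailed analysis of the homological spectrum of~$\SH$ in~\cite{BalmerCameron21}, and once granted the remaining $p$-adic manipulations are standard chromatic bookkeeping. A secondary point to verify carefully is that the argument works for a weak ring~$A$ rather than only for an $E_\infty$-ring, but this poses no real issue since $p$-completion and the $\HFp$-locality step only require the multiplicative structure witnessing the weak ring axiom.
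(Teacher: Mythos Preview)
Your argument breaks at the step ``$p$-completion agrees with Bousfield localization at~$\HFp$, so $\widehat{A}_p$ is automatically $\HFp$-local.'' This identification holds for bounded-below spectra but fails in general: the Bousfield classes $\langle S/p\rangle$ and $\langle\HFp\rangle$ differ. Concretely, Morava $K$-theory $K(1)$ is $p$-complete (indeed $K(1)$ is a retract of $K(1)\wedge S/p$) and satisfies $\HFp\wedge K(1)=0$, yet $K(1)\neq 0$; hence $K(1)$ is $p$-complete but not $\HFp$-local. Your argument would apply verbatim to $A=K(1)$ and conclude $\widehat{K(1)}_p=0$, which is false. Of course $K(1)$ does not satisfy the full hypothesis of the lemma (it is rationally acyclic), so this is not a counterexample to the statement, but it shows your proposed mechanism is broken. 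Nothing in your argument uses the hypotheses at heights other than~$1$, which is already a warning sign.

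The difficulty you are skating over is genuine: for an arbitrary (not bounded-below) weak ring~$A$, vanishing of $\HFp\wedge A$ does not force vanishing at any finite height without some further input. The paper's proof supplies that input by writing $\HFp=\hocolim_n P(n)$ along ring maps, arguing that the unit $\unit\to P(n)\otimes A$ must vanish for some finite~$n$ (here the weak-ring hypothesis on~$A$ is used to conclude $P(n)\otimes A=0$ from vanishing of the unit), and then invoking Ravenel's Bousfield-class decomposition $\langle P(n)\rangle=\langle K(n)\rangle\vee\langle P(n+1)\rangle$ to deduce $K(n)\otimes A=0$. This is exactly the kind of finite-height nilpotence fact that your $p$-adic shortcut cannot see.
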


\begin{proof}
	The claim is that if $A$ is a weak ring (with unit $\eta:\unit \to A$) then $\HFp \otimes A=0$ implies $K(n) \otimes A$ for some finite $n$. (We drop the $p$ from the notation.) Write $\HFp = \hocolim_n P(n)$ as in~\cite[1.2]{HopkinsSmith98} and let $u_n : \unit \to P(n)$ denote the unit map of the homotopy ring spectrum $P(n)$. Then $\HFp \otimes A =0$ implies $0 = \colim_n \pi_0(P(n) \otimes A)$. Thus $u_1 \otimes \eta:\unit \to P(1) \otimes A$ vanishes in this colimit. Since the maps $P(n) \to P(n+1)$ are ring homomorphisms, it follows that $u_n \otimes \eta = 0$ for some $n$, which implies that $P(n) \otimes A =0$. This in turn implies $K(n) \otimes A = 0$ by the equality of Bousfield classes $\langle P(n) \rangle = \langle K(n) \rangle \vee \langle P(n+1)\rangle$ established in~\cite[Theorem 2.1]{Ravenel84}.
\end{proof}

\begin{Rec}
	The $p$-local stable homotopy category $\SH_{(p)}$ is the finite localization of $\SH$ with respect to the Thomason subset $W\coloneqq \bigcup_{q \neq p} \overbar{\{\cat C_{q,2}\}}$. That is, $\SH_{(p)}$ is the restriction ${\SH}|_{W^{\complement}}$ to the (non-open) subset $W^{\complement} = \SET{\cat C_{p,n}}{1 \le n\le \infty}$. When speaking of the points $\cat C_{p,n} \in \Spc(\SH_{(p)}^{\,\dname})$ we drop the $p$ and just write $\cat C_n$.
\end{Rec}

\begin{Exa}
	Let $\cat T=\SH_{(p)}$ be the $p$-local stable homotopy category. Bousfield completion with respect to $Y \coloneqq \smash{\overbar{\{\cat C_2\}}}$ is called $p$-completion and $\TYpp \eqqcolon \SH_p^{\wedge}$ is the category of $p$-complete spectra. Note that $Y = \supp(M(p))$ where $M(p) = \cone(\smash{\unit \xrightarrow{p} \unit})$ is the mod-$p$ Moore spectrum.
\end{Exa}

\begin{Exa}
	Let $\cat T=\SH$ be the stable homotopy category and consider the Thomason subset $Y \coloneqq \overbar{\{\cat C_{p,2}\}} = \supp(M(p))$. Note that $Y \cup \{\cat C_1\}$ is the complement of the Thomason subset $W(Y) = \bigcup_{q	\neq p} \overbar{\{\cat C_{q,2}\}} = \bigcup_{q \neq p} \supp(M(q))$ from \cref{Rem:W(Y)}. Recall that the localization $\SH \to {\SH}|_{W(Y)^{\complement}} = \SH_{(p)}$ is precisely $p$-localization. By Tate excision (\cref{Thm:Tate-excision}) we recover the standard equivalence
	\[\begin{tikzcd}
		\SH \ar[d]\ar[r]& \SH_{(p)} \ar[d]\\
		\SH_{Y}^{\perp\perp} \ar[r,"\simeq"] & (\SH_{(p)})_{Y}^{\perp\perp}.
	\end{tikzcd}\]
	In other words, $p$-completion can be performed either on $\SH$ or on $\SH_{(p)}$ depending on convenience and personal taste.
\end{Exa}

\begin{Prop}[Strickland] \label{prop:p-complete-ess-surj}
	The $p$-completion $\SHd \to (\SH_p^{\wedge})^{\dname}$ is essentially surjective: Every dualizable $p$-complete spectrum is the $p$-completion of a finite spectrum.
\end{Prop}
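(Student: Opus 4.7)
Let $X\in(\SH_p^\wedge)^\dname$ be a dualizable $p$-complete spectrum. I will show $X\simeq\widehat E$ for a finite spectrum $E$ via three steps: reduce to finite data mod~$p$, lift that data to a finite spectrum, and conclude via Moore-spectrum detection of equivalences between $p$-complete spectra.

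\emph{Step 1: $X\wedge M(p)$ is finite.} By \Cref{Rem:TY-TYpp}, the compact objects of the Bousfield completion $\SH_p^\wedge=(\SH_{(p)})_Y^{\perp\perp}$ coincide with $\TYc$, so the Moore spectrum $M(p)\in\TYc$ is compact in~$\SH_p^\wedge$. Applying \Cref{Prop:dualizables}\,(a) to the compactly generated tt-category $\SH_p^\wedge$, the tensor of the dualizable $X$ with the compact $M(p)$ remains compact, hence $X\wedge M(p)\in\TYc$ is a finite $p$-power-torsion spectrum in $\SH_{(p)}$. In particular, $\pi_\ast(X\wedge M(p))$ is a bounded, finitely generated graded abelian group.

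\emph{Step 2: Lift to a finite spectrum.} Next I construct a finite spectrum $E\in\SHdp$ and a map $f\colon E\to X$ such that $f\wedge M(p)$ is an equivalence. Starting from a finite CW-structure on $X\wedge M(p)$ provided by Step~1, the plan is to inductively attach cells to $E$ and lift their attaching maps to $X$ via the Bockstein long exact sequence, which expresses $\pi_\ast(X\wedge M(p))$ in terms of $\pi_\ast(X)/p$ and the $p$-torsion of $\pi_{\ast-1}(X)$. Boundedness of $\pi_\ast(X\wedge M(p))$ guarantees termination after finitely many cells.

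\emph{Step 3: Conclude $\widehat f$ is an equivalence.} Since $X$ is already $p$-complete and $[\eY,-]\colon\cat T\to\TYpp$ is left adjoint to the inclusion (hence preserves cofibers), the cofiber $C$ of $\widehat f$ in $\SH_p^\wedge$ satisfies $C\wedge M(p)=0$. Tensoring the defining triangle $\unit\xrightarrow{p}\unit\to M(p)$ with $C$ shows that $p\colon C\to C$ is an equivalence, so $C/p^n=0$ for all~$n$; combined with $p$-completeness of~$C$ (as an object of $\SH_p^\wedge$), this forces $C\simeq\varprojlim_n C/p^n=0$. Hence $X\simeq\widehat E$. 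The main obstacle will be the inductive cell-attachment in Step~2: one must check at each stage that the obstructions to extending $f$ along the next cell vanish, which should follow from the finiteness of $X\wedge M(p)$ established in Step~1 together with the surjectivity provided by the Bockstein sequence.
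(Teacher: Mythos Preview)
Your Steps~1 and~3 are correct and cleanly argued: the compactness of $X\wedge M(p)$ via \Cref{Prop:dualizables}(a) is exactly right, and the detection argument in Step~3 (that $C\wedge M(p)=0$ forces $C=0$ for $p$-complete~$C$) is standard and sound.

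The genuine gap is Step~2, which you yourself flag as the ``main obstacle.'' You have only described a plan (``inductively attach cells \ldots\ lift their attaching maps via Bockstein'') without carrying it out. The Bockstein sequence tells you that classes in $\pi_*(X/p)$ arise either from $\pi_*(X)/p$ or from $p$-torsion in $\pi_{*-1}(X)$, but it does not by itself let you lift a cell of $X/p$ to a map into~$X$; the obstruction is precisely the Bockstein. Making this precise requires a careful induction, and in particular one typically needs to know that $X$ is bounded below so that the induction has a starting point and so that $\pi_*(X)$ is degreewise finitely generated over~$\bbZ_p$. You have not established bounded-belowness, and without it the cell-by-cell construction is not well-founded.

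The paper takes a shortcut here: rather than redo this construction, it simply cites Strickland's result \cite[Proposition~5.31]{Strickland24pp}, which proves (for bounded-below $p$-complete~$X$) that $X/p$ finite implies $X$ is the $p$-completion of a finite spectrum. The paper's only additional contribution is a short argument that a \emph{dualizable} $p$-complete spectrum is automatically bounded below: the homotopy groups of a $p$-complete spectrum are Ext-$p$-complete, and since $X/p$ is finite (your Step~1), multiplication by~$p$ is an isomorphism on $\pi_i(X)$ for $i\ll 0$; an Ext-$p$-complete group on which $p$ acts invertibly must vanish. This is the missing ingredient you would also need to make your Step~2 rigorous.
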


\begin{proof}
	This is established in~\cite[Proposition 5.31]{Strickland24pp}. Note that although \emph{loc.~cit.}~ is stated for a \emph{bounded below} $p$-complete spectrum $X$, the implication $(c)\Rightarrow (f)$ holds for any $p$-complete spectrum, since a dualizable $p$-complete spectrum is necessarily bounded below. Indeed, the homotopy groups $\pi_i(X)$ of a $p$-complete spectrum are Ext-$p$-complete; see~\cite[Section 5]{Strickland24pp}. If $X \in \SH_p^{\wedge}$ is dualizable then $X \otimes M(p) = X/p$ is compact. Hence $\pi_i(X/p)=0$ for $i \ll 0$, which implies $p:\smash{\pi_i(X) \xrightarrow{\sim} \pi_i(X)}$ is an isomorphism for $i \ll 0$. This implies that $X$ is bounded below since an abelian group $A$ which is Ext-$p$-complete must vanish if $p:A\to A$ is an isomorphism; see~\cite[Section 12]{Strickland20pp}.
\end{proof}

\begin{Prop}\label{prop:surjective-embedding}
	Let $F:\cat K \to \cat L$ be a tt-functor which is essentially surjective. Then $\Spc(F):\Spc(\cat L)\to \Spc(\cat K)$ is a topological embedding.
\end{Prop}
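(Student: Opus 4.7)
The plan is straightforward. Write $\varphi \coloneqq \Spc(F)\colon \Spc(\cL) \to \Spc(\cK)$, so that $\varphi(\cQ) = F^{-1}(\cQ)$ and $\varphi^{-1}(\supp(c)) = \supp(F(c))$ for every $c \in \cK$ (these are the defining properties of $\Spc$ on tt-functors). The proof splits cleanly into an injectivity step and a topology-matching step.

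First I would show that $\varphi$ is injective. Suppose $\cQ_1, \cQ_2 \in \Spc(\cL)$ satisfy $\varphi(\cQ_1) = \varphi(\cQ_2)$; equivalently, for every $c \in \cK$, $F(c)$ belongs to $\cQ_1$ if and only if it belongs to $\cQ_2$. By essential surjectivity, every object $d \in \cL$ is isomorphic to $F(c)$ for some $c \in \cK$, and prime tt-ideals are closed under isomorphism, so it follows that $\cQ_1 = \cQ_2$.

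Second I would match the topologies. Both $\Spc(\cK)$ and $\Spc(\cL)$ carry the Balmer topology, whose closed subsets are arbitrary intersections of the basic closed subsets $\supp(-)$. Combining $\varphi^{-1}(\supp(c)) = \supp(F(c))$ with essential surjectivity of $F$, every basic closed subset $\supp(d) \subseteq \Spc(\cL)$ has the form $\varphi^{-1}(\supp(c))$ for some $c \in \cK$ with $F(c) \cong d$. Taking arbitrary intersections (and using that $\varphi^{-1}$ commutes with them), every closed subset of $\Spc(\cL)$ is the $\varphi$-preimage of a closed subset of $\Spc(\cK)$. Combined with injectivity of $\varphi$, this is exactly the condition that the bijection $\Spc(\cL) \to \varphi(\Spc(\cL))$ induced by $\varphi$ is a homeomorphism (since $\varphi(\varphi^{-1}(T)) = T \cap \varphi(\Spc(\cL))$), i.e.\ that $\varphi$ is a topological embedding.

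I do not anticipate any serious obstacle here: the whole argument rests on the standard functoriality of support under tt-functors together with the observation that tt-ideals depend only on isomorphism classes, which is precisely what allows essential surjectivity to substitute for literal surjectivity on objects.
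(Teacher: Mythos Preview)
Your proof is correct and uses the same key observation as the paper: essential surjectivity forces every basic closed set $\supp(d)\subseteq\Spc(\cL)$ to equal $\varphi^{-1}(\supp(c))$ for some $c\in\cK$, and injectivity follows for the same reason (the paper simply cites \cite[Corollary~3.8]{Balmer05a} for this). The only difference in execution is the final step: the paper shows $\varphi$ maps \emph{Thomason} closed subsets to closed subsets of $\Img(\varphi)$ and then invokes a general fact about spectral spaces \cite[Theorem~5.3.3]{DickmannSchwartzTressl19} to upgrade this to all closed subsets, whereas you bypass that citation by passing directly to arbitrary intersections of basic closed sets. Your route is slightly more self-contained; the paper's route records that $\varphi$ is in fact a spectral embedding, which is marginally stronger information but not needed for the stated proposition.
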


\begin{proof}
	The map $\varphi\coloneqq \Spc(F)$ is injective by~\cite[Corollary 3.8]{Balmer05a}. Let $Z \subseteq \Spc(\cat L)$ be a Thomason closed subset. Then $Z=\supp(a)$ for some $a \in \cat L$ and $a \simeq F(b)$ or some $b \in \cat K$. Hence $Z=\supp(F(b)) = \varphi^{-1}(\supp(b))$. Thus $\varphi(Z) = \varphi(\varphi^{-1}(\supp(b))) = \Img \varphi \cap \supp(b)$. Thus $\varphi:\Spc(\cat K) \to \Img \varphi$ is a spectral map which sends Thomason closed subsets to closed sets. By~\cite[Theorem 5.3.3]{DickmannSchwartzTressl19}, this implies that $\varphi$ is a closed map. Hence $\varphi$ is a homeomorphism onto its image.
\end{proof}

\begin{Thm}\label{Thm:p-completion}
	The $p$-completion $\SH_{(p)} \to \SH_p^{\wedge}$ induces a homeomorphism
	\[
		\varphi : \Spc( (\SH_p^{\wedge})^{\dname}) \xrightarrow{\sim} \Spc(\SHdp).
	\]
\end{Thm}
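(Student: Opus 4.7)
The plan is to show that $\varphi$ is both a topological embedding and surjective, and hence a homeomorphism.

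For the embedding property, I would first verify that the functor $\widehat{(-)}\colon \SHdp \to (\SH_p^{\wedge})^{\dname}$ is essentially surjective. Strickland's \cref{prop:p-complete-ess-surj} gives essential surjectivity for $\SHd \to (\SH_p^\wedge)^\dname$. Since the Thomason subset $W \coloneqq \bigcup_{q \neq p} \supp(M(q))$ used to define $p$-localization is disjoint from $Y=\overbar{\{\cat C_2\}}$, Tate excision (\cref{Thm:Tate-excision}\,(a)) provides an equivalence $\hat{f}^*\colon \hTd \isoto \hSd$ between the $Y$-completion of $\SHd$ and the $Y$-completion of $\SHdp$. The right-hand commuting square in \cref{Prop:functorial}\,\eqref{it:functorial-hat-f^*} then transfers essential surjectivity from $\SHd \to \hTd$ across this equivalence to $\SHdp \to \hSd = (\SH_p^\wedge)^\dname$. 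Applying \cref{prop:surjective-embedding} to this tt-functor makes $\varphi$ a topological embedding, and in particular injective.

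For surjectivity of $\varphi$, \cref{Thm:Spc(compl)-on-Y} already places $Y$ in the image (as a homeomorphic copy). The only remaining point to account for is $\cat C_1$: the chromatic specializations $\cat C_1 \leadsto \cat C_2 \leadsto \cdots \leadsto \cat C_\infty$ make $\overbar{\{\cat C_2\}}=\{\cat C_n : 2 \le n \le \infty\}$, so $Y^\complement = \{\cat C_1\}$. Since $\gen(\{\cat C_2\}) \cap \overbar{\{\cat C_1\}} = \{\cat C_1,\cat C_2\} \cap \Spc(\SHdp) = \{\cat C_1,\cat C_2\}$, the point $\cat C_1$ is an immediate generalization of $Y$ in the sense of \cref{rem:immediate-generalization}. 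Consequently \cref{Cor:immediate-generalization} places $\cat C_1$ in $\Supp(\tY(\unit)) \subseteq \Img(\varphi)$.

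Putting these together, $\varphi$ is a bijective topological embedding, hence a homeomorphism. The main obstacle is the first step, which relies on Strickland's substantial structural result on dualizable $p$-complete spectra and uses Tate excision to pass from $\SHd$ down to $\SHdp$; surjectivity is then a direct formal consequence of the chromatic specialization structure and \cref{Cor:immediate-generalization}, itself a by-product of the Tate Intermediate Value Theorem.
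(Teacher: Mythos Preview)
Your proposal is correct and follows essentially the same approach as the paper: surjectivity via \cref{Thm:Spc(compl)-on-Y} and \cref{Cor:immediate-generalization}, then the embedding property via Strickland's essential surjectivity and \cref{prop:surjective-embedding}. Your Tate-excision argument for transferring essential surjectivity from $\SHd$ to $\SHdp$ is correct but more elaborate than necessary---since the completion $\SHd \to (\SH_p^\wedge)^\dname$ factors through the localization $\SHd \to \SHdp$, essential surjectivity of the composite immediately gives essential surjectivity of the second factor; the paper simply cites \cref{prop:p-complete-ess-surj} directly.
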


\begin{proof}
	We know that $\Img \varphi \supseteq Y$ by \cref{Thm:Spc(compl)-on-Y}. Moreover, the generic point $\cat C_1$ is also contained in $\Img \varphi$ by \cref{Cor:immediate-generalization} since it is an immediate generalization of $\cat C_2 \in Y$. Therefore $\varphi$ is surjective. On the other hand, the functor $\SHdp \to (\SH_p^{\wedge})^{\dname}$ is essentially surjective by \cref{prop:p-complete-ess-surj}. Invoking \cref{prop:surjective-embedding}, we conclude that $\varphi$ is a surjective embedding.
\end{proof}

\begin{Rem}
	The usefulness of the Tate Intermediate Value Theorem is potentially limited by the possibility that $\Supp(\tY(\unit))$ could simply consist of all generalizations of~$Y$, that is, $\Supp(\tY(\unit)) = \gen(Y) \cap Y^{\complement}$. In this case, the theorem is not particularly helpful. For example, if $\cat T$ is local with Thomason closed point $Y=\{\mathfrak m\}$, then it could be the case that the map induced by completion $\Spc(\smash{\hTd})\to\Spc(\Td)$ is surjective. This is to be expected in noetherian commutative algebra, but \cref{exa:valuation-domain} shows that it need not hold in the non-noetherian case. Let us give an example of this phenomenon in (non-noetherian) chromatic homotopy theory.
\end{Rem}

\begin{Exa}
	Let $\cat T=\SH_{E(n)}$ denote the category of $E(n)$-local spectra (at some implicit prime $p$). The spectrum
        \[
			\Spc(\SHEnd) = \{ \cat C_{1} \leadsto \cdots \leadsto \cat C_{n}\leadsto \cat C_{n+1}\}
        \]
	is a local irreducible space consisting of $n+1$ points, as depicted above. Here $\cat C_h = \SET{ x \in \SHEnd}{K(h-1)_*(x)=0}$. This result is due to Hovey--Strickland~\cite{HoveyStrickland99} and is also discussed in~\cite[Section~10]{BarthelHeardSanders23a}.
\end{Exa}

\begin{Prop}\label{Prop:Kn}
	The Bousfield completion of $\cat T=\SH_{E(n)}$ at the unique closed point can be identified with the $K(n)$-localization
		\[
			\SH_{E(n)} \to \SH_{K(n)}
		\]
	which induces a surjective map on spectra
	\begin{equation}\label{eq:Spc-Kn-En}
			\Spc(\SHKnd) \twoheadrightarrow \Spc(\SHEnd).
	\end{equation}
\end{Prop}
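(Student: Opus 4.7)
The plan has two parts.

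First, for the identification $\TYpp \simeq \SH_{K(n)}$: the Thomason-closed point $Y = \{\cat C_{n+1}\}$ corresponds via the Hopkins--Smith thick subcategory theorem to the tt-ideal $\TYc$ of dualizable $E(n)$-local spectra supported on $\{\cat C_{n+1}\}$, which is generated by $L_n F(n)$ for any finite type-$n$ spectrum $F(n)$. Thus $\TY = \Loc{L_n F(n)}$ and $\TYp$ consists of $F(n)$-acyclic spectra (using dualizability of $F(n)$). In $\SH_{E(n)}$ one has the Bousfield-class identity $\langle F(n)\rangle = \langle K(n)\rangle$, because the higher Morava $K$-theories $K(m)$ for $m > n$ vanish after $L_n$-localization. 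Consequently $\TYp$ coincides with the $K(n)$-acyclic subcategory, so $\TYpp = \SH_{K(n)}$ sits inside $\SH_{E(n)}$, and the completion functor $[\eY, -]\colon \SH_{E(n)} \to \TYpp$ is the left adjoint to this inclusion, \ie the Bousfield $K(n)$-localization $L_{K(n)}$.

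Second, for the surjectivity of $\varphi\colon \Spc(\SHKnd) \to \Spc(\SHEnd)$: by \Cref{cor:img-of-compl}, $\Img(\varphi) = \Supp([\eY,\unit]) = \Supp(L_{K(n)}\Sphere)$. The category $\SH_{E(n)}$ inherits the steel condition from $\SH$, with homological residue fields given by the Morava $K$-theories, so a prime $\cat C_h$ lies in $\Supp(L_{K(n)}\Sphere)$ exactly when $K(h-1)_*(L_{K(n)}\Sphere) \neq 0$. The closed point $\cat C_{n+1}$ is in the support immediately since $K(n)_*(L_{K(n)}\Sphere) = K(n)_*\Sphere \neq 0$, and $\cat C_n$ also follows from \Cref{Cor:immediate-generalization} as the unique immediate generalization of $\cat C_{n+1}$. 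For $1\le h < n$, one invokes the chromatic fact $K(h-1)_*(L_{K(n)}\Sphere) \neq 0$, which follows from analyses of the chromatic fracture square relating $L_n$, $L_{K(n)}$, and the lower chromatic localizations.

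The main obstacle is this last chromatic non-vanishing for $1 \le h < n$; the formal tt-machinery developed in the paper reduces surjectivity to this specific support computation, whose verification requires chromatic input (\eg standard results of Hopkins, Ravenel, and Hovey--Strickland on the chromatic fracture) that sits outside the paper's formal framework.
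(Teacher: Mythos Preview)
Your proposal is correct and essentially follows the paper's strategy, but with a few differences worth noting.

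For the identification $\TYpp \simeq \SH_{K(n)}$, you argue directly: $\TYp$ consists of the $F(n)$-acyclics in $\SH_{E(n)}$, and $\langle F(n)\rangle = \langle K(n)\rangle$ there, so $\TYp$ is the $K(n)$-acyclics and $\TYpp = \SH_{K(n)}$. The paper instead invokes stratification of $\SH_{E(n)}$ to get the telescope conjecture, identifies the finite localization at $Y$ with $L_{n-1}$, and then recognizes $[\eY,\cT]$ as the monochromatic category $\cat M_n$, citing Hovey--Strickland's equivalence $\cat M_n \simeq \SH_{K(n)}$. Your route is more elementary and avoids the stratification machinery; the paper's route makes the connection to the classical monochromatic/$K(n)$-local equivalence explicit.

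For surjectivity, you and the paper agree on the reduction via \Cref{cor:img-of-compl} to $\Img(\varphi)=\Supp(L_{K(n)}\unit)$. Where you treat $\cat C_{n+1}$ and $\cat C_n$ by hand and then gesture at ``chromatic fracture'' for the remaining points, the paper dispatches all points at once by citing Hovey--Strickland's Bousfield-class identity
\[
\langle L_{K(n)}\unit\rangle = \langle L_{E(n)}\unit\rangle = \langle E(n)\rangle = \langle K(0)\rangle \vee \cdots \vee \langle K(n)\rangle,
\]
from which $K(m)\otimes L_{K(n)}\unit \neq 0$ for all $0\le m\le n$ is immediate. This is exactly the ``chromatic input'' you anticipated; your obstacle dissolves once you quote this single result rather than arguing height-by-height.
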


\begin{proof}
	Let $Y=\{ \cat C_{n+1}\}$ denote the unique closed point. Since the $E(n)$-local category $\cat T$ is stratified~\cite[Theorem 10.14]{BarthelHeardSanders23a} and hence satisfies the telescope conjecture~\cite[Theorem 9.11]{BarthelHeardSanders23a}, the finite localization on $\cat T$ associated to $Y$ coincides with the smashing localization $L_{n-1}$ associated to $E(n-1)$. It follows that, in the terminology of~\cite[Section~6.3]{HoveyStrickland99}, the Bousfield completion $\eY \otimes \cat T \cong [\eY,\cat T]$ is the monochromatic category $\cat M_n$ which~\cite[Theorem 6.19]{HoveyStrickland99} identifies with the $K(n)$-local category $\SH_{K(n)}$. It then follows from \cref{cor:img-of-compl} that the image of the map \eqref{eq:Spc-Kn-En} is precisely $\Supp(L_{K(n)}\unit)$, the support of the $K(n)$-local sphere.

	Recall from~\cite[Proposition 5.3]{HoveyStrickland99} that we have equalities of Bousfield classes
		\[
			\langle L_{K(n)} \unit \rangle = \langle L_{E(n)} \unit \rangle = \langle E(n) \rangle = \langle K(0) \rangle \vee \cdots \vee \langle K(n) \rangle.
		\]
	In other words, $L_{K(n)}\unit \otimes t = 0$ if and only if $K(i) \otimes t = 0$ for each $i=0,\ldots, n$. Taking $t$ to be the Morava $K$-theory $K(m)$ we see that $L_{K(n)}\unit \otimes K(m) = 0$ if and only if $m>n$. The homological support is given by tensoring with the Morava $K$-theories (see e.g.~\cite[(5.12)]{BarthelHeardSanders23b}). Hence (taking into account a shift by one) $\Supp(L_{K(n)}\unit) = \{ \cat C_1,\ldots,\cat C_{n+1}\}$, that is, the whole of $\Spc(\SHEnd)$.
\end{proof}

\begin{Rem}
	The Hovey--Strickland Conjecture asserts that the map \eqref{eq:Spc-Kn-En} is a homeomorphism; see~\cite[Proposition 3.5 and Remark 3.6]{BarthelHeardNaumann22}.
\end{Rem}

\begin{Rem}
	The above proposition shows that in the case of the $E(n)$-local category, the support of the associated Tate ring (for the unique closed point) is the whole spectrum (minus the closed point). Nevertheless, completing at the closed point of a local category \emph{can} result in a Tate ring with an interesting support, as we saw already in the minimal \Cref{exa:valuation-domain}.
\end{Rem}

\subsection{Examples from equivariant homotopy theory}

\begin{Exa}\label{exa:SH(G)}
	Let $G$ be a finite group and let $\SH(G)=\Ho(\Sp_G)$ denote the equivariant stable homotopy category. As a set, the spectrum $\Spc(\SHGd)$ consists of a number of copies of $\Spc(\SHd)$, one copy, or `layer'
	\[
		L_H \coloneqq \SET{\cat P_G(H,p,n)}{\text{all } p, 1 \le n \le \infty}
	\]
	for each conjugacy class of subgroups $H\le G$; see~\cite{BalmerSanders17}. The layer for the trivial subgroup $L_{\{e\}} = \supp(G_+) \eqqcolon Y$ is Thomason closed and its associated idempotent triangle is the isotropy separation sequence $EG_+ \to S^0 \to \smash{\widetilde{E}G}$. The Bousfield completion of $\SH(G)$ along $Y$ may be identified with the tt-category of Borel equivariant spectra $\SH(G)_{\mathrm{borel}} \coloneqq \Ho(\Fun(BG,\Sp))$; see~\cite[Proposition~6.17]{MathewNaumannNoel17}. The $Y$-complete $G$-spectra $\SH(G)_Y^{\perp\perp}=[\eY,\SH(G)] \subset \SH(G)$ are often called the \emph{Borel complete} $G$-spectra.
\end{Exa}

\begin{Not}
	We write $\tG:\SH(G)\to \SH(G)$ for the Tate construction $\tY$ associated to $Y=\supp(G_+)$. That is, $\tG(-) = \widetilde{E}G \otimes [EG_+,-]$.
\end{Not}

\begin{Not}
	Given a nonequivariant spectrum $s \in \SH$, we write $\bG(s)\coloneqq [EG_+,\triv_G(s)]$ for the \emph{Borel equivarization} (or \emph{Borel completion}) of $s$.
\end{Not}

\begin{Rem}\label{rem:equivariant-norm}
	The `norm' exact triangle \eqref{eq:norm-exact-triangle} takes the form
	\[
		EG_+ \otimes t \to [EG_+,t] \to \tG(t) \to \Sigma EG_+ \otimes t
	\]
	for any $t \in \SH(G)$. Applying the fixed point functor $(-)^G : \SH(G) \to \SH$ and using the Adams isomorphism one obtains the norm cofiber sequence
	\[
		t_{hG} \to t^{hG} \to \tG(t)^G \to \Sigma t_{hG}.
	\]
\end{Rem}

\begin{Rem}
	The category $\SH(G)$ satisfies the steel condition and for any weak ring $A$ we have $\cat P(H,p,n) \in \Supp(A)$ if and only if $\cat C_{p,n} \in \Supp(\Phi^H(A))$ if and only if $K(p,n-1)_*(\Phi^H(A)) \neq 0$; cf.~\cref{rem:supp-in-SH}.
\end{Rem}

\begin{Thm}\label{Thm:equiv}
	Let $G$ be a finite abelian group and let $\cat T=\SH(G)_{(p)}$ be the $p$-localization of $\SH(G)$. For any $H \le G$, we have
	\[
		\Supp(\tG(\unit)) \cap L_H = \begin{cases}
			L_H & \text{if $H$ is a nontrivial $p$-group} \\
			\emptyset & \text{otherwise}.
			\end{cases}
	\]
	That is, the support of the Tate ring consists precisely of the layers corresponding to the nontrivial $p$-subgroups.
\end{Thm}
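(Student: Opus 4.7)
The plan is to reduce the layer-by-layer computation of $\Supp(\tG(\unit))$ to a question about the nonequivariant Tate sphere $S^{tH}$. For any $H\le G$, the restriction functor $\mathrm{res}^G_H\colon \SH(G)_{(p)} \to \SH(H)_{(p)}$ is ambidextrous for finite groups (Wirthmüller isomorphism), hence satisfies Grothendieck--Neeman duality and is closed in the sense of \Cref{rem:closed-functor}. Since $\mathrm{res}^G_H(G_+) \simeq [G{:}H]\cdot H_+$ as an $H$-spectrum, one has $\Spc(\mathrm{res}^G_H)^{-1}(Y_G) = Y_H$. Applying \Cref{Cor:tate-supp-base-change}\,(b) yields
\[
\Supp(\tH(\unit)) = \Spc(\mathrm{res}^G_H)^{-1}(\Supp(\tG(\unit))).
\]
For $G$ abelian, the compatibility $\Phi^K\circ \mathrm{res}^G_H \simeq \mathrm{res}^{G/K}_{H/K}\circ \Phi^K$ shows that $\Spc(\mathrm{res}^G_H)$ sends $\cat P_H(K,p,n)\mapsto \cat P_G(K,p,n)$ for every $K\le H$, restricting in particular to a bijection $L^H_H \leftrightarrow L^G_H$. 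This reduces the theorem to computing $L^H_H \cap \Supp(\tH(\unit))$ for each finite abelian group $H$.

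Once reduced, the prime $\cat P_H(H,p,n)$ is detected by the composite $K(p,n-1)_*\circ \Phi^H$, and the identity $\widetilde{E}H^{\otimes 2}\cong \widetilde{E}H$ gives $\Phi^H(\tH(\unit)) = (\tH(\unit))^H = S^{tH}$, the classical Tate sphere. When $H=e$ the Tate construction vanishes outright. When $H$ is abelian but not a $p$-group, decompose $H = H_p \times H_{p'}$ with $H_{p'}$ nontrivial of order coprime to~$p$; the iterated Tate formula $S^{tH} \simeq (S^{tH_p})^{tH_{p'}}$ (valid for such direct products) then gives $S^{tH}_{(p)}=0$, because Tate with respect to~$H_{p'}$ acts by zero on any $p$-local spectrum: multiplication by $|H_{p'}|$ is invertible there, so the norm map becomes an equivalence.

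The technical heart of the argument is the remaining case: for $H$ a nontrivial finite abelian $p$-group, one must establish the chromatic non-vanishing $K(p,n-1)_*(S^{tH})\neq 0$ needed to place every point of $L^H_H$ in $\Supp(\tH(\unit))$. For $H=C_p$ this combines the Segal conjecture (Lin--Gunawardena), which identifies $S^{hC_p}_p$ with the $p$-complete sphere, with the Ravenel--Wilson formula for $K(n)_*(BC_p)$, plugged into the norm cofiber triangle $\Sigma^\infty BC_{p+}\to S^{hC_p}\to S^{tC_p}$. For a general nontrivial $p$-group, one proceeds by induction on $|H|$ via a central extension $1\to C_p\to H\to H/C_p\to 1$, iterating the Tate construction and propagating non-vanishing by a standard spectral-sequence argument. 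This chromatic input is the principal obstacle; everything else --- the reduction via restriction and the non-$p$-group vanishing --- is formal once the base-change formula of \Cref{Cor:tate-supp-base-change} and the iterated Tate formula are in hand.
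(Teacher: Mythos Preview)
Your reduction step via restriction is correct and matches the paper's approach exactly. However, the identification $\Phi^H(\tH(\unit)) = (\tH(\unit))^H = S^{tH}$ is wrong whenever $H$ has nontrivial proper subgroups. The idempotence $\widetilde{EH}^{\otimes 2}\simeq\widetilde{EH}$ only gives $(\widetilde{EH}\otimes-)^H(\tH(\unit))=(\tH(\unit))^H$, but $(\widetilde{EH}\otimes-)^H$ is \emph{not} $\Phi^H$ unless $H=C_p$; geometric fixed points use $\widetilde{E}\mathcal{P}$ for the family $\mathcal{P}$ of \emph{all} proper subgroups. For instance, for $H=C_{p^2}$ the Segal conjecture and tom Dieck splitting give (after $p$-localization) $S^{tC_{p^2}}\simeq \hat{S}_p\vee(\Sigma^\infty_+BC_p)^\wedge_p$, whereas $\Phi^{C_{p^2}}(\bH(\unit))\simeq\hat{S}_p$. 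The canonical ring map goes $S^{tH}\to\Phi^H(\tH(\unit))$, so vanishing of the source forces vanishing of the target (your non-$p$-group argument survives, modulo the dubious iterated-Tate formula), but non-vanishing of the source does \emph{not} give non-vanishing of the target. This breaks your non-vanishing argument for $p$-groups beyond $C_p$.

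The paper handles both cases differently and more directly. For the vanishing, it invokes the derived defect base result of Mathew--Naumann--Noel: $\bG(\unit)$ lies in the thick ideal generated by $\{G/K_+ : K\text{ a $p$-subgroup}\}$, so if $G$ is not a $p$-group then $\Phi^G$ kills it. For the non-vanishing, rather than attacking $\Phi^G(\bG(\unit))$ itself, the paper passes to the Lubin--Tate theories via the ring map $\bG(\unit)\to\bG(E_n)$ and uses the blue-shift theorem of \cite{BHNNNS19}: for $G$ an abelian $p$-group and $n\ge\rank_p(G)$, the spectrum $\Phi^G(\bG(E_n))$ has height exactly $n-\rank_p(G)$. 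Together with \Cref{rem:unital-map} and \Cref{lem:finite-to-infinite} this gives all chromatic heights at once. Your proposed Segal-plus-induction route, even if the target were correctly identified, would essentially have to reprove this blue-shift result; the ``standard spectral-sequence argument'' you gesture at is precisely the hard chromatic input, not a formality.
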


\begin{proof}
	For any subgroup $H \le G$, the restriction functor $\SH(G)_{(p)} \to \SH(H)_{(p)}$ satisfies Grothendieck--Neeman duality and hence is a closed functor (\cref{rem:closed-functor}). Hence, if $\varphi_H^G : \Spc(\SH(H)_{(p)}\cc)\to\Spc(\SH(G)_{(p)}\cc)$ denotes the induced map, then it follows from \cref{Cor:tate-supp-base-change}(b) that $\Supp(\tH(\unit)) = (\varphi_H^G)^{-1}(\Supp(\tG(\unit))$. Thus, $\cat P_G(H,p,n) \in \Supp(\tG(\unit))$ if and only if $\cat P_H(H,p,n) \in \Supp(\tH(\unit))$. We are thus reduced to proving the $H=G$ case of the theorem.

	The statement is true if $G=1$ is the trivial group, since $Y=L_H$ in that case. Note that if $G\neq 1$ then $\Phi^G(\tG(\unit)) = \Phi^G(\bG(\unit))$ since the trivial family of subgroups is then contained in the family of proper subgroups. By~\cite[Theorem 4.25]{MathewNaumannNoel19}, the derived defect base of the Borel equivarization of the $p$-local sphere $\bG(\unit)$ is the family of $p$-subgroups of $G$. In other words, $\bG(\unit) \in \Thick_{\otimes}\langle G/K_+ \mid K \text{ is a $p$-subgroup}\rangle$. If $G$ is \emph{not} a $p$-group, then this family is contained in the family of proper subgroups, and hence $\Phi^G(\bG(\unit))=0$. This establishes that $\Supp(\tG(\unit)) \cap L_G = \emptyset$ if $G$ is not a nontrivial $p$-group.
	
	Finally, suppose that $G$ is a nontrivial (abelian) $p$-group. For any $n$, let $E_n$ denote a Lubin--Tate spectrum at $p$ of height $n$. The ring homomorphism $S^0 \to E_n$ induces a ring homomorphism $\bG(\unit)\to \bG(E_n)$. This in turn induces a ring homomorphism $\Phi^G(\tG(\unit))=\Phi^G(\bG(\unit))\to \Phi^G(\bG(E_n))$. Thus $\Supp(\Phi^G(\tG(\unit))) \supseteq \Phi^G(\Supp(\bG(E_n)))$ by \cref{rem:unital-map}. A key result~\cite[Theorem 3.5]{BHNNNS19} establishes that if $n \ge \rank_p(G)$ then the height of $\Phi^G(\bG(E_n))$ is $n-\rank_p(G)$. This implies that $\cat C_{1+n-\rank_p(G)} \in \Supp(\Phi^G(\bG(E_n))$; see~\cite[Remark 3.2]{BHNNNS19} and~\cite[Remark 10.29]{excisive}. Hence $\cat C_{1+n-\rank_p(G)} \in \Supp(\Phi^G(\tG(\unit)))$. Thus, letting $n$ vary, we see that $\cat C_{m} \in \Supp(\Phi^G(\tG(\unit)))$ for all finite $m$. It then follows that $\cat C_{\infty} \in \Supp(\Phi^G(\tG(\unit)))$ by \cref{lem:finite-to-infinite}. This establishes that $\Supp(\tG(\unit)) \cap L_G = L_G$ when $G$ is a nontrivial $p$-group, which completes the proof.
\end{proof}

\subsection{Examples from modular representation theory}

%
\begin{Exa}\label{exa:KInj}
	Let $G$ be a finite group and $k$ a field of characteristic $p>0$. Let $\cat T=\DRep(G;k)\simeq \KInj(kG)$ be the category of derived $k$-linear $G$-representations. Recall from \cref{Exa:KInj} that the Bousfield completion $\cat T\to\TYpp$ at the unique closed point $Y=\{\mathfrak m\}$ is the canonical functor $\KInj(kG) \twoheadrightarrow \Der(kG)$, and that this induces an equivalence $\KInj(kG)\cc \xrightarrow{\sim} \Db(kG\mmod) = \Der(kG)\dd$.
\end{Exa}

\begin{Prop}\label{prop:[e1]-modules}
	Assume $\cat T=\Ho(\cat C)$ has an underlying model and let
	\[
		f^*:\cat T =\Ho(\cat C)\to\Ho([\eY,\unit]\text{-}\mathrm{Mod}_{\cat C})\eqqcolon \cat S
	\]
	be base-change along $\unit \to [\eY,\unit]$. The Bousfield completion $\cat T \to \TYpp$ factors as
	\begin{equation*}
		\cat T \to \cat S \to \TYpp
	\end{equation*}
	in which the tt-functor $\cat S \to \TYpp$ is fully faithful on small objects: $\cat \Sd \hookrightarrow \hTd$.
\end{Prop}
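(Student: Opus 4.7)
The plan is: first construct the factorization via the universal property of base change, then show that every dualizable $A$-module has $Y$-complete underlying object, and finally conclude fully faithfulness on dualizables. Throughout write $A=[\eY,\unit]$ and let $U\colon\cat S\to\cat T$ denote the forgetful functor right adjoint to $f^*=A\otimes-$.

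For the factorization, the Bousfield completion $L=[\eY,-]\colon\cat T\to\TYpp$ is a symmetric monoidal tt-functor. Applied to the unit map $\eta\colon\unit\to A$, it gives a comparison $[\eY,\unit]\to[\eY,[\eY,\unit]]$; the target is naturally identified with $[\eY,\unit]$ using $\eY\otimes\eY\simeq\eY$ (see~\eqref{eq:idemp-triangle} and surrounding discussion), under which $L(\eta)$ becomes the identity. Hence $L$ inverts $\unit\to A$, and by the universal property of base change in the underlying model $\cat C$---symmetric monoidal functors from $\cat T$ inverting $\unit\to A$ factor uniquely through $f^*$---we obtain the factorization $\cat T\xrightarrow{f^*}\cat S\xrightarrow{L'}\TYpp$.

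The key technical step is to show $U(M)\in\TYpp$ for every $M\in\cat S\dd$. Since $\unit\in\cat T$ is compact and $U$ preserves coproducts, its left adjoint $f^*$ preserves compactness, so $A=f^*(\unit)$ is compact in $\cat S$. A standard argument then shows every dualizable $A$-module is compact, i.e., $\cat S\dd\subseteq\cat S\cc$, and because $\cat T$ is rigidly-compactly generated, $\cat S\cc=\Thick_{\cat S}(f^*(\Td))$. For every dualizable $c\in\Td$, $U(f^*(c))=A\otimes c\cong[\eY,c]\in\TYpp$ by the identity $[\eY,\unit]\otimes c\cong[\eY,c]$ valid for dualizable~$c$ (see the proof of \Cref{Prop:pseudo-proj-formula}). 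Since $\TYpp$ is thick in $\cat T$ and $U$ is exact, we conclude $U(\cat S\dd)\subseteq\TYpp$.

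For fully faithfulness of $L'|_{\cat S\dd}\colon\cat S\dd\hookrightarrow\hTd$, the previous step identifies $L'(M)$ with $U(M)\in\TYpp$ for $M\in\cat S\dd$, so it suffices to show that $U$ is fully faithful on $U^{-1}(\TYpp)\subseteq\cat S$. This should be formal from the fact that $A$ is the $\otimes$-unit of $\TYpp$: $A$-modules internal to $\TYpp$ canonically recover $\TYpp$ itself, so any morphism in $\cat T$ between $Y$-complete objects is automatically compatible with the canonical $A$-actions. Combined with $\cat S\dd\subseteq U^{-1}(\TYpp)$ this yields the desired fully faithful embedding. The hardest part will be to make this last identification rigorous at the level of the underlying model---verifying that the ambient $A$-module structure on $U(M)$ agrees with the canonical one carried by every $Y$-complete object.
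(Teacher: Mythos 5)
Your steps (1) and (2) are reasonable and roughly parallel the paper's setup: the paper likewise records that $\cat S$ is rigidly-compactly generated by $f^*(\Tc)$ and that the unit $\unit_{\cat S}$ is $f^{-1}(Y)$-complete (which implies your observation that every dualizable $A$-module has $Y$-complete underlying object). But step (3) is where the proof actually lives, and your heuristic does not establish it. The claim that $U=f_*$ is fully faithful on $U^{-1}(\TYpp)$ --- equivalently, that every morphism in $\cat T$ between the underlying objects of two such $A$-modules is automatically $A$-linear --- is not a formal consequence of $A$ being the unit of $\TYpp$, and the justification you give conflates two different tensor products. Indeed, $A=[\eY,\unit]$ is the unit of $\TYpp$ for the tensor $\hat\otimes\coloneqq[\eY,-\otimes-]$, so modules over $A$ inside $(\TYpp,\hat\otimes)$ trivially recover $\TYpp$. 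But $\cat S$ is the category of $A$-modules with respect to the original tensor $\otimes$ of $\cat T$, which is a genuinely different structure: $A$ is not an idempotent ring in $\cat T$ (the multiplication $A\otimes A\to A$ is not an isomorphism; compare \Cref{Rem:comp-def}, where the paper notes $\hat{R}_I\otimes\hat{R}_I\not\simeq\hat{R}_I$). As a result, the counit $f^*f_*(M)\to M$ is not an isomorphism even for dualizable $M$ with $U(M)\in\TYpp$, and the forgetful functor carries genuine extra data. What you describe as a final technicality to be made rigorous --- that the ambient $A$-action on $U(M)$ agrees with some ``canonical'' one --- is in fact the entire content of the statement, not a bookkeeping issue.

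The paper handles this by a different route. Using \Cref{Prop:functorial}(\ref{it:functorial-hat-f^*}), it produces a commutative square relating $\cat T\to\TYpp$ to $\cat S\to\cat S_{f^{-1}(Y)}^{\perp\perp}$, observes that the completeness of $\unit_{\cat S}$ makes the latter fully faithful on $\Sc=\Sd$, and then invokes the non-trivial result~\cite[Theorem~2.16]{perfect} to conclude that the Bousfield-completed comparison $\hat f^*\colon\TYpp\to\cat S_{f^{-1}(Y)}^{\perp\perp}$ is an equivalence. That cited theorem is where the real work is done, and nothing in your sketch substitutes for it.
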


\begin{proof}
	Recall that $\cat S$ is rigidly-compactly generated by $f^*(\cT\cc)$. By construction, the unit $\unit_{\cat S}$ is $f^{-1}(Y)$-complete. Indeed, applying the conservative functor $f_*$ to the canonical map $\unit_{\cat S} \to [\mathbb{e}_{f^{-1}(Y)},\unit_{\cat S}]$, we obtain $f_*(\unit_{\cat S}) \to f_*[\mathbb{e}_{f^{-1}(Y)},\unit_{\cat S}] \simeq f_*[f^*(\eY),\unit_{\cat S}] \simeq [\eY,f_*(\unit_{\cat S})]$ which is an isomorphism since $f_*(\unit_\cat S)=[\eY,\unit_{\cat T}]$ is $Y$-complete. It then follows from~\cite[Theorem 2.16]{perfect} that the bottom tt-functor in the commutative diagram
	\[\begin{tikzcd}
		\cat T \ar[r,"f^*"] \ar[d] & \cat S \ar[d] \\
		\TYpp \ar[r,"\simeq"] & \cat S_{f^{-1}(Y)}^{\perp\perp}
	\end{tikzcd}\]
	of \cref{Prop:functorial}(d) is an equivalence. On the other hand, since $\unitS$ is $f^{-1}(Y)$-complete, the right arrow is fully faithful on $\Sc=\Sd$.
\end{proof}

\begin{Prop}\label{Prop:DMack}
	Let $G$ be a finite $p$-group and let $k$ be a field of characteristic~$p$. Let $\cat T=\DMack(G;k)$ be the category of derived Mackey functors. Its spectrum $\Spc(\Td)$ is the lattice of conjugacy classes of subgroups of $G$ with the trivial subgroup providing a unique closed point $\mathfrak m$. The completion $\Td \to \hTd$ at $Y=\{\mathfrak m\}$ may be identified with the canonical functor $\DMack(G;k)\cc \to \DRep(G;k)\cc$.
\end{Prop}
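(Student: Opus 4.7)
The plan splits the proof into two parts: the computation of $\Spc(\Td)$ and the identification of the completion functor.

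For the spectrum, I would cite the known description of $\Spc(\DMack(G;k)\cc)$ as the subgroup lattice of $G$ (due, in the $p$-group / characteristic-$p$ case, to Patchkoria--Sanders--Wimmer, and compatible with the general framework of equivariant tt-geometry via geometric fixed points). With the standard specialization convention, $\cat{P}_H \leadsto \cat{P}_K$ precisely when $K$ is subconjugate to $H$, so the trivial subgroup gives the unique closed point $\mathfrak m$ of $\Spc(\Td)$. Thus $Y = \{\mathfrak m\}$ is closed and Thomason, and $\TYc$ is the tt-ideal generated by the derived Mackey-functor avatar of $G/1_+$.

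For the identification of the completion, the strategy parallels both \cref{exa:SH(G)} (Borel completion of $G$-spectra) and \cref{Exa:KInj} (completion of $\KInj(kG)$ at its closed point). I would construct the canonical geometric functor $f^* \colon \DMack(G;k) \to \DRep(G;k)$ as the passage from spectral Mackey functors to their underlying Borel-equivariant theory, and then apply \cref{prop:[e1]-modules} (using a spectral Mackey functor $\infty$-categorical model) to factor the Bousfield completion $\cat T \to \TYpp$ through the category of $[\eY,\unit]$-modules. The key step is to identify $[\eY,\unit]$ with the Borel Eilenberg--MacLane ring object whose modules compute $\DRep(G;k)$; this is the modular analogue of the identification, in \cref{exa:SH(G)}, of the Borel completion of $\SH(G)$ with $\Ho(\Fun(BG,\Sp))$, transported to the $Hk$-module setting. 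Granting this, base-change yields a fully faithful embedding $\DRep(G;k)\cc \hookrightarrow \hTd$. Essential surjectivity then follows from \cref{Thm:Spc(compl)-on-Y}, which identifies $(\TYpp)\cc$ with $\TYc$, together with the fact that for a finite $p$-group in characteristic $p$ the category $\DRep(G;k)$ is rigidly-compactly generated with compacts equal to dualizables.

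The principal obstacle is the identification of the ring object $[\eY,\unit]$ in $\DMack(G;k)$ with the Borel Eilenberg--MacLane model that computes $\DRep(G;k)$. This is concrete tensor-categorical data, but executing it requires either a careful $\infty$-model (via Guillou--May/Kaledin spectral Mackey functors) or a direct invocation of the known equivalence between Borel-complete derived Mackey functors and derived representations --- a property specific to the $p$-group/characteristic-$p$ setting that underlies the entire result.
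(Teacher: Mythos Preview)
Your overall strategy matches the paper's proof closely: cite Patchkoria--Sanders--Wimmer for the spectrum, invoke \cref{prop:[e1]-modules} to factor the Bousfield completion through $[\eY,\unit]$-modules, identify that ring with the Borel Eilenberg--MacLane spectrum $\bG(\Hk)$ (the paper does this by modelling $\DMack(G;k)$ as $\triv_G(\Hk)$-modules in $\Sp_G$ and citing Barthel's identification of $\bG(\Hk)$-modules with $\DRep(G;k)$), and conclude that $\DRep(G;k)\cc \hookrightarrow \hTd$ is fully faithful.

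The gap is in your essential surjectivity step. You appeal to \cref{Thm:Spc(compl)-on-Y} and the equality $(\TYpp)\cc=\TYc$, but that only controls \emph{compact} objects in $\TYpp$, whereas $\hTd=(\TYpp)\dd$ consists of \emph{dualizable} objects, which a priori can be strictly larger (the ``exotic dualizables'' of \cref{Rem:exotic}). Knowing that $\DRep(G;k)$ is rigidly-compactly generated tells you $\Sd=\Sc$, but says nothing about whether every dualizable in $\TYpp$ lies in the image. The paper closes this gap differently: the proof of \cref{prop:[e1]-modules} actually gives an equivalence $\TYpp\simeq\cat S_{f^{-1}(Y)}^{\perp\perp}$, so $\hTd\simeq(\cat S_{f^{-1}(Y)}^{\perp\perp})\dd$; then one invokes \cref{exa:KInj}, which says precisely that $\cat S=\KInj(kG)$ is already complete at its closed point, i.e.\ $\Sc=\Sd\to(\cat S_{f^{-1}(Y)}^{\perp\perp})\dd$ is an equivalence. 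You mention \cref{Exa:KInj} as a motivating parallel, but it is in fact the essential ingredient you need at this final step, not \cref{Thm:Spc(compl)-on-Y}.
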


\begin{proof}
	The spectrum $\Spc(\Td)$ is computed in~\cite{PatchkoriaSandersWimmer22}; see also~\cite[Part~V]{BarthelHeardSanders23a}. We are completing along $Y=\supp(G_+)$. According to \cref{prop:[e1]-modules}, the functor $\cat T \to \TYpp$ factors as $\cat T \to \cat S \to \TYpp$. Regarding $\cat T\simeq\Ho(\triv_G(\rmH k)\text{-Mod}_{\Sp_G})$ as modules over the equivariant ring spectrum $\triv_G(\Hk) \in \CAlg(\Sp_G)$, the map of rings $\unit \to [\eY,\unit]$ amounts to
		\[
			\triv_G(\Hk) \to [EG_+,\triv_G(\Hk)] \eqqcolon \bG(\Hk).
		\]
	As explained in~\cite[Theorem 3.7]{barthel2021rep2}, we have an equivalence of tt-categories $\Ho(\bG(\Hk)\text{-Mod}_{\Sp_G}) \simeq \DRep(G;k)$. The functor $\cat T \to \cat S$ may thus be identified with the geometric functor $\DMack(G;k)\to\DRep(G;k)$. Moreover, on dualizable objects, $\Sd \hookrightarrow \smash{\hTd}$ is an equivalence, since $\cat S \cong \DRep(G;k)$ has the property that $\Sc=\Sd \to (\cat S_{f^{-1}(Y)}^{\perp\perp})^{\dname}$ is an equivalence (\cref{exa:KInj}).
\end{proof}

\begin{Exa}
	Let $G=D_8$ be the dihedral group of order $8$ and let $k=\overbar{\bbF_{\!2}}$.
	\[
	\begin{tikzpicture}[scale=1.5]
  \begin{scope} [rotate=-90, yshift=2cm]
	\def\diabulletsize{0.06}
	\def\diaxoffset{0.45}
	\def\diayoffset{0.45}
	\coordinate (b) at (0,0);
	\coordinate (y) at ($(b)-(1*\diaxoffset,0)$);
	\draw (y) -- ++(0:0.2);
	\draw (y) --  ++(45:0.2);
	\draw (y) --  ++(22.5:0.2);
	\draw (y) --  ++(-22.5:0.2);
	\draw (y) --  ++(-45:0.2);
	\filldraw[fill=yellow,draw=black] ($(b)-(1*\diaxoffset,0)$) circle (\diabulletsize);
	\draw[fill=green,draw=black] ($(b)+(0.06,0)$) to [bend right=50] ($(b)+(0.06,\diayoffset)$) to ($(b)+(0,\diayoffset)$) to [bend left=50] ($(b)$);
	\draw[fill=green,draw=black] ($(b)+(-0.06,0)$) to [bend left=50] ($(b)+(-0.06,\diayoffset)$) to ($(b)+(0,\diayoffset)$) to [bend right=50] ($(b)$);
	\draw[fill=OliveGreen,draw=black] ($(b)+(0.06,0)$) to [bend left=50] ($(b)+(0.06,-\diayoffset)$) to ($(b)+(0,-\diayoffset)$) to [bend right=50] ($(b)$);
	\draw[fill=OliveGreen,draw=black] ($(b)+(-0.06,0)$) to [bend right=50] ($(b)+(-0.06,-\diayoffset)$) to ($(b)+(0,-\diayoffset)$) to [bend left=50] ($(b)$);
	\filldraw[fill=purple,draw=black] ($(b)+(0,-\diayoffset)$) circle (\diabulletsize);
	\filldraw[fill=red,draw=black] ($(b)+(0,\diayoffset)$) circle (\diabulletsize);
	\filldraw[fill=orange,draw=black] (b) circle (\diabulletsize);
  \end{scope}
  \draw[<-] (0.5,0) -- (1.1,0); \node at (0.8,0.2) {$\varphi_Y$};
  \begin{scope}[rotate=-90, xshift=-0.5cm, yshift=-0.5cm]
	\def\diabulletsize{0.06cm}
	\def\diaxoffset{0.45}
	\def\diayoffset{0.45}
	\coordinate (b) at (0,0);
	\fill[fill=cyan!60] ($(b)+(\diaxoffset,\diayoffset)$) circle (2*\diabulletsize);
	\fill[fill=cyan!60] ($(b)+(2*\diaxoffset,\diayoffset)$) circle (2*\diabulletsize);
	\fill[fill=cyan!60] ($(b)+(\diaxoffset,0)$) circle (2*\diabulletsize);
	\fill[fill=cyan!60] ($(b)+(2*\diaxoffset,-\diayoffset)$) circle (2*\diabulletsize);
	\coordinate (s) at (b);
	\fill[fill=cyan!60] (s) ++(135:2*\diabulletsize) -- ++ (\diaxoffset,\diayoffset) -- ($(s) + (\diaxoffset,\diayoffset) + (-45:2*\diabulletsize)$)
		-- ($(s) + (2*\diabulletsize,0)$)
		-- ($(s) + (\diaxoffset,-\diayoffset) + (45:2*\diabulletsize)$)
		-- ($(s)+(\diaxoffset,-\diayoffset)+(-135:2*\diabulletsize)$)
		-- ($(s)+(-135:2*\diabulletsize)$);
	\coordinate (s) at (b);
	\fill[fill=cyan!60] ($(s)+(\diaxoffset,-\diayoffset)$) circle (2*\diabulletsize);
	\fill[fill=cyan!60] (s) ++(135:2*\diabulletsize) -- ++ (\diaxoffset,\diayoffset) -- ($(s) + (\diaxoffset,\diayoffset) + (-45:2*\diabulletsize)$)
		-- ($(s) +(-45:2*\diabulletsize)$);
	\coordinate (s) at ($(b)+(\diaxoffset,0)$);
	\fill[fill=cyan!60] (s) ++(135:2*\diabulletsize) -- ++ (\diaxoffset,\diayoffset) -- ($(s) + (\diaxoffset,\diayoffset) + (-45:2*\diabulletsize)$)
		-- ($(s) + (2*\diabulletsize,0)$)
		-- ($(s) + (\diaxoffset,-\diayoffset) + (45:2*\diabulletsize)$)
		-- ($(s)+(\diaxoffset,-\diayoffset)+(-135:2*\diabulletsize)$)
		-- ($(s)+(-135:2*\diabulletsize)$);
	\coordinate (s) at (b);
	\fill[fill=cyan!60] ($(s)+(\diaxoffset,\diayoffset)+(0,2*\diabulletsize)$) -- ++(\diaxoffset,0) -- ++(0,-2*\diabulletsize)
		-- ($(s)+(\diaxoffset,0)+(2*\diabulletsize,0)$)
		-- ($(s)+(2*\diaxoffset,-\diayoffset)$)
		-- ++(0,-2*\diabulletsize)
	-- ++(-\diaxoffset,0)
	-- ++(0,2*\diabulletsize)
	-- (s);
	\fill[fill=yellow!60] (b) circle (2*\diabulletsize);
	\draw (b) -- ($(b) + (\diaxoffset,\diayoffset)$) --
	($(b) + (2*\diaxoffset,\diayoffset)$) --
	($(b) + (3*\diaxoffset,0)$);
	\draw (b) -- ($(b) + (\diaxoffset,-\diayoffset)$) --
	($(b) + (2*\diaxoffset,-\diayoffset)$) --
	($(b) + (3*\diaxoffset,0)$);
	\draw ($(b) +(\diaxoffset,0)$) -- ($(b) + (2*\diaxoffset,\diayoffset)$);
	\draw ($(b) +(\diaxoffset,0)$) -- ($(b) + (2*\diaxoffset,-\diayoffset)$);
	\draw (b) -- ($(b) + (3*\diaxoffset,0)$);
	\filldraw[fill=yellow] (b) circle (\diabulletsize);
	\filldraw[fill=orange] ($(b)+(\diaxoffset,0)$) circle (\diabulletsize);
	\filldraw ($(b)+(2*\diaxoffset,0)$) circle (\diabulletsize);
	\filldraw ($(b)+(3*\diaxoffset,0)$) circle (\diabulletsize);
	\filldraw[fill=red] ($(b)+(\diaxoffset,\diayoffset)$) circle (\diabulletsize);
	\filldraw[fill=green] ($(b)+(2*\diaxoffset,\diayoffset)$) circle (\diabulletsize);
	\filldraw[fill=purple] ($(b)+(\diaxoffset,-\diayoffset)$) circle (\diabulletsize);
	\filldraw[fill=OliveGreen] ($(b)+(2*\diaxoffset,-\diayoffset)$) circle (\diabulletsize);
  \end{scope}
	\end{tikzpicture}
	\]

\vspace{-0.5em}

\noindent The spectrum of~$\DMack(G;k)\cc$ is the lattice of conjugacy classes of subgroups of~$D_8$; it has 8 points as depicted on the left-hand side above, with specialization going upwards, from larger subgroups to smaller ones. Each of these conjugacy classes is a singleton, except the two red points which each represents a pair of $G$-conjugate cyclic~$C_2$'s. Our chosen Thomason subset~$Y$ is the yellow point corresponding to the trivial subgroup. The two green points correspond to the two Klein-four subgroups of~$D_8$, which are its maximal elementary abelian $2$-subgroups. Finally, it will follow from our description of~$\varphi_Y$ that the cyan region is the support of the Tate ring.

	On the right-hand side, the spectrum of the $Y$-completion $\DMack(G;k)\dd_{\mathfrak m}\cong\Db(kG\mmod)$ is~$\Spech(\rmH^\sbull(D_8,k))$. By Quillen Stratification, the latter has two irreducible components (the closures of the green curves), one for each Klein-four in~$D_8$. More precisely, we are mapping the spectra of the derived categories of elementary abelian subgroups to the spectrum of~$\Db(kG\mmod)$ via the map induced on $\Spc(-)$ by the restriction functor. Each of these two irreducible components is the image of $\Spech(\rmH^\sbull(C_2^{\times2},k))$, that is, a $\bbP^1_k$ with a closed point on top. Two of the $\bbF_{\!2}$-rational points of~$\bbP^1_k$ get identified over~$G$, because each Klein-four has two cyclic subgroups that get conjugated in~$D_8$. This phenomenon is known as fusion. This gives the (outer) red point on each component. (The orange point comes from the center $Z(D_8)\cong C_2$ which does not get `fused' with any other $C_2$ in~$D_8$.) These two components are glued together along the Sierpi\'nski space coming from the center of~$D_8$ (the middle orange point and the yellow point). In particular, the right-hand spectrum has again a unique closed point (the irrelevant ideal~$\rmH^+(G,k)$ in~$\Spech(\rmH^\sbull(G,k))$); this is the yellow point in the right-hand side.

	The map $\varphi_Y\colon\Spc(\hTd)\to \SpcT$ is described by the colors: The preimage of each point of the left-hand lattice is the part with the same color in the right-hand picture. The yellow point is the unique preimage of the yellow point (compare \Cref{Thm:Spc(compl)-on-Y}), each green area goes to the corresponding green point, etc. This calculation is an easy verification from the following three facts. First, the object $F_G(G/H_+)$ in~$\DMack(G,k)^c$ is mapped to~$k(G/H)$ in~$\Db(kG\mmod)$.
	Second, the support of $F_G(G/H_+)\cong\Ind_H^G(\unit)$ on the left-hand side consists of the image under $\Spc(-)$ of restriction to~$H$, by general \'etale tt-geometry. See~\cite{Balmer16b}. This amounts to the conjugacy classes of subgroups of~$H$. Similarly, by the same generalities, the support of~$k(G/H)\cong\Ind_H^G(\unit)$ on the right-hand side consists of the image under $\Spc(-)$ of restriction to~$H$. These subsets are precisely the ones described above, in Quillen Stratification. And thirdly, again by general tt-geometry, the preimage under~$\varphi=\Spc(F)$ of the support $\supp(c)$ of an object is the support of the image~$F(c)$ of said object, here applied to~$c=F_G(G/H_+)$. The result follows by inspection, by varying the subgroup~$H$ among the elementary abelian subgroups of~$G$, starting with~$H=1$ and then increasing the $2$-rank one by one.
\end{Exa}

\subsection{Excisive examples}

We will now illustrate these ideas in the case of categories of excisive functors. This is based on~\cite{excisive} and we will assume some familiarity with that work.

\begin{Not}
	We fix a prime $p$ and work $p$-locally throughout. For each $d \ge 1$, we have the tt-category of $p$-local $d$-excisive functors
	\[
		\Exc{d} \coloneqq \Exc{d}(\Sp\cc,\Sp)_{(p)}\simeq \Exc{d}(\Sp\cc,\Sp_{(p)}).
	\]
\end{Not}

\begin{Rec}
	As a set, the spectrum $\Spc(\Exc{d}^{\dname})$ consists of $d$ disjoint copies of $\Spc(\Sp_{(p)}^{\dname})$ which are pulled back via the Tate derivatives
	\[
		\partial_\ell : \Exc{d} \to \Sp_{(p)}, \qquad 1 \le \ell \le d.
	\]
	The prime $\Spc(\partial_\ell)(\cat C_h)$ is denoted $\cat P_d([\ell],h)$ and we write $L_{\ell} \coloneqq \Img(\Spc(\partial_\ell)) = \SET{\cat P_d([\ell],h)}{1 \le h \le \infty}$ for the $\ell$th layer.
\end{Rec}

\begin{Rem}
	As explained in~\cite[Corollary 7.21]{excisive}, the steel condition holds for $\Exc{d}$ and the homological residue field at $\cat P_d([\ell],h)$ is the functor $K(h-1)_* \circ \partial_\ell$. Thus, for a weak ring $A$, we have
	\begin{align*}
		\cat P_d(\num{\ell},h) \in \Supp(A) &\Longleftrightarrow K(h-1)_* \partial_\ell(A) \neq 0 \\
										 &\Longleftrightarrow \cat C_h \in \Supp(\partial_\ell(A))
	\end{align*}
	by invoking~\cite[Theorem 1.8]{Balmer20_bigsupport} and \cref{rem:supp-in-SH}.
\end{Rem}

\begin{Rem}
	The category $\Exc{d}$ is rigidly-compactly generated by a set of compact-dualizable generators $P_d h_{\Sphere}(i)$, $1 \le i \le d$, which includes the unit $\unit = P_d h_{\Sphere}(1)$. The finite localization associated to the Thomason closed set $Y\coloneqq \supp(P_d h_{\Sphere}(d)) = L_d$ may be identified with the $(d-1)$-excisive approximation $P_{d-1} : \Exc{d} \to \Exc{d-1}$. Moreover, note that $Y=L_d$ is a copy of $\Spc(\Sp_{(p)}\dd)$. In~\cite{excisive}, the spectrum $\smash{\Spc(\Exc{d}^{\dname})}$ is computed by induction on $d$ utilizing this open-closed decomposition. It provides an excellent illustration of the philosophy of the present paper. Set-theoretically we have $\Spc(\Exc{d}^{\dname}) = Y \sqcup Y^{\complement} = \Spc(\Sp_{(p)}^{\dname}) \sqcup \Spc(\Exc{d-1}^{\dname})$. The key to the inductive step is to understand how these pieces are glued together.
	\[\def\diabulletsize{0.06cm}
\def\diayoffset{0.45}
\def\diaxoffset{0.45}
\def\dianumbasenodes{4}
\def\dianumbasenodesminus{3}
\def\dianumbasenodesminust{2}
\begin{tikzpicture}
	\coordinate (base) at (0,0);


	\coordinate (start) at (base);
	\filldraw[color=yellow!60] ($(start)+(-2*\diabulletsize,0)$) -- ($(start)+(-2*\diabulletsize,\dianumbasenodes*\diayoffset+1.25*\diayoffset)$)
	-- ($(start)+(2*\diabulletsize,\dianumbasenodes*\diayoffset+1.25*\diayoffset)$)
	-- ($(start)+(2*\diabulletsize,0)$) -- ($(start)+(-2*\diabulletsize,0)$);
	\filldraw[color=yellow!60] (start)+(0,0) circle (2*\diabulletsize);
	\filldraw[color=yellow!60] (start)+(0,\dianumbasenodes*\diayoffset+1.25*\diayoffset) circle (2*\diabulletsize);

	\coordinate (start) at ($(base)+(\diaxoffset,0)$);
	\filldraw[color=gray!60] ($(start)+(-2*\diabulletsize,0)$) -- ($(start)+(-2*\diabulletsize,\dianumbasenodes*\diayoffset+1.25*\diayoffset)$)
	-- ($(start)+(0,\dianumbasenodes*\diayoffset+1.25*\diayoffset)$)
	-- ++($(0,2*\diabulletsize)$)
	-- ++($(3*\diaxoffset,0)$)
	-- ++($(2*\diabulletsize,-2*\diabulletsize)$)
	-- ($(start)+(3*\diaxoffset,0)+(2*\diabulletsize,0)$)
	-- ++($(-2*\diabulletsize,-2*\diabulletsize)$)
	-- ($(start)+(0,-2*\diabulletsize)$) -- ($(start)+(-2*\diabulletsize,0)$);
	\filldraw[color=gray!60] (start)+(0,0) circle (2*\diabulletsize);
	\filldraw[color=gray!60] ($(start)+(3*\diaxoffset,0)$) circle (2*\diabulletsize);
	\filldraw[color=gray!60] ($(start)+(3*\diaxoffset,\dianumbasenodes*\diayoffset+1.25*\diayoffset)$) circle (2*\diabulletsize);
	\filldraw[color=gray!60] (start)+(0,\dianumbasenodes*\diayoffset+1.25*\diayoffset) circle (2*\diabulletsize);


	\coordinate (start) at ($(base)$);
	\draw (start) -- ($(start)+(0,\dianumbasenodes*\diayoffset-0.1*\diayoffset)$);
	\foreach \n in {1,...,\dianumbasenodes} {
			\draw [fill=blue] ($(start)+(0,\n*\diayoffset-\diayoffset)$) circle (\diabulletsize);
	};
	\node at ($(start)+(0,\dianumbasenodes*\diayoffset+0.75*\diayoffset)$) {$\vdots$};
	\draw [fill=blue] ($(start)+(0,\dianumbasenodes*\diayoffset+1.25*\diayoffset)$) circle (\diabulletsize);
	\node at ($(start)+(0.0*\diaxoffset,-0.75*\diayoffset)$) {\scriptsize $Y$};
	\node at ($(start)+(2.5*\diaxoffset,-0.75*\diayoffset)$) {\scriptsize $\Spc(\Exc{d-1}^{\dname})$};

	\coordinate (start) at ($(base)+(\diaxoffset,0)$);
	\draw (start) -- ($(start)+(0,\dianumbasenodes*\diayoffset-0.1*\diayoffset)$);
	\foreach \n in {0,...,\dianumbasenodesminus} {
			\draw [fill=red] ($(start)+(0,\n*\diayoffset)$) circle (\diabulletsize);
	};
	\node at ($(start)+(0,4*\diayoffset+0.75*\diayoffset)$) {$\vdots$};
	\draw [fill=red] ($(start)+(0,\dianumbasenodes*\diayoffset+1.25*\diayoffset)$) circle (\diabulletsize);

	\coordinate (start) at ($(base)+(2*\diaxoffset,0)$);
	\draw (start) -- ($(start)+(0,\dianumbasenodes*\diayoffset-0.1*\diayoffset)$);
	\foreach \n in {1,...,\dianumbasenodes} {
			\draw [fill=red] ($(start)+(0,\n*\diayoffset-\diayoffset)$) circle (\diabulletsize);
	};
	\node at ($(start)+(0,4*\diayoffset+0.75*\diayoffset)$) {$\vdots$};
	\draw [fill=red] ($(start)+(0,\dianumbasenodes*\diayoffset+1.25*\diayoffset)$) circle (\diabulletsize);

	\coordinate (start) at ($(base)+(3*\diaxoffset,-0.5*\diayoffset)$);
	\foreach \n in {1,...,\dianumbasenodes} {
	\node at ($(start)+(0,\n*\diayoffset)$) {$\hdots$};
	};
	\node at ($(start)+(0,4*\diayoffset+\diayoffset)$) {$\hdots$};

	\coordinate (start) at ($(base)+(4*\diaxoffset,0)$);
	\draw (start) -- ($(start)+(0,\dianumbasenodes*\diayoffset-0.1*\diayoffset)$);
	\foreach \n in {1,...,\dianumbasenodes} {
			\draw [fill=red] ($(start)+(0,\n*\diayoffset-\diayoffset)$) circle (\diabulletsize);
	};
	\node at ($(start)+(0,4*\diayoffset+0.75*\diayoffset)$) {$\vdots$};
	\draw [fill=red] ($(start)+(0,\dianumbasenodes*\diayoffset+1.25*\diayoffset)$) circle (\diabulletsize);
%
\end{tikzpicture}
\]
\end{Rem}

\vspace{-1em}

\begin{Rem}
	The Tate construction $\tY:\Exc{d} \to \Exc{d}$ of interest to us is denoted~$t_d$ in~\cite{excisive}. We write $\td$ for it.
\end{Rem}

\begin{Rem}
	Recall from~\cite[Definition 10.32]{excisive} that a $p$-power partition of~$d$ of length $\ell$ is a partition $\lambda = (d_1,\ldots,d_\ell) \vdash d$ such that each $d_i$ is a power of $p$. (Here we include $p^0=1$ as a power of $p$.)
\end{Rem}

\begin{Thm}\label{Thm:excisive}
	Let $d \ge 1$ be an integer and let $\cat T=\Exc{d}(\Sp\cc,\Sp)_{(p)}$. For any $1 \le \ell \le d$, we have
		\[
			\Supp(\td(\unit)) \cap L_\ell = \begin{cases}
			L_\ell & \text{if $d > \ell$ and $d$ admits a $p$-power partition of length $\ell$}\\
			\emptyset & \text{otherwise}
			\end{cases}
		\]
\end{Thm}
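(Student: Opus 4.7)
The plan is to parallel the proof of \cref{Thm:equiv}, with $p$-power partitions of~$d$ playing the role that nontrivial $p$-subgroups played there.

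First, the case $\ell = d$ is immediate from the construction: by \cref{Def:Tate} the object $\td(\unit) = \ff_{L_d}\otimes[\ee_{L_d},\unit]$ is $L_d^{\complement}$-local, so $\Supp(\td(\unit))\cap L_d=\emptyset$. This matches the theorem because the condition $d>\ell=d$ fails (even though the trivial composition $(1,1,\ldots,1)\vdash d$ of length~$d$ is a $p$-power partition, it corresponds to the trivial group and is filtered out by the inequality $d>\ell$).

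For $\ell<d$, I would invoke the steel condition for $\Exc{d}$ and the identification of the homological residue field at $\cat P_d([\ell],h)$ as $K(h-1)_*\circ\partial_\ell$: so $\Supp(\td(\unit))\cap L_\ell$ is either empty or all of $L_\ell$, depending on whether $\partial_\ell(\td(\unit))$ is $K(h-1)$-acyclic for all~$h$; and by \cref{lem:finite-to-infinite} it suffices to examine finite heights. The main step is thus to identify $\partial_\ell(\td(\unit))$ as a $p$-local spectrum. Unfolding $\td=\ff_{L_d}\otimes[\ee_{L_d},-]$ through the cross-effect/Goodwillie-derivative formalism of~\cite{excisive}, one expects a decomposition of $\partial_\ell(\td(\unit))$ indexed by ordered compositions $\lambda=(d_1,\ldots,d_\ell)\vdash d$, with each contribution a spectrum-level Tate construction for the block subgroup $\Sigma_\lambda=\Sigma_{d_1}\times\cdots\times\Sigma_{d_\ell}\le\Sigma_d$ acting on an appropriate smash power of the sphere. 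Equivalently, each contribution is a Borel-equivariantization-followed-by-$\Sigma_\lambda$-Tate of the type that already appeared in \cref{Thm:equiv}.

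With this identification in hand, I would determine chromatic support summand-by-summand. Repeating the argument of \cref{Thm:equiv} (maps from the sphere to Lubin--Tate spectra, \cref{rem:unital-map}, \cref{lem:finite-to-infinite} to reach $h=\infty$, and the key input~\cite[Theorem~3.5]{BHNNNS19}), the $\Sigma_\lambda$-Tate applied to the $p$-local sphere has nonzero Morava $K$-theory at every finite height precisely when $\Sigma_\lambda$ is a nontrivial $p$-group. Now $\Sigma_\lambda$ is a $p$-group if and only if every $d_i$ is a power of~$p$ (that is, $\lambda$ is a $p$-power partition of~$d$), and is nontrivial if and only if some $d_i>1$, which for a length-$\ell$ composition summing to~$d$ is equivalent to $d>\ell$. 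Summing over~$\lambda$ we conclude that $\Supp(\td(\unit))\cap L_\ell=L_\ell$ precisely when a $p$-power partition of~$d$ of length~$\ell$ exists and $d>\ell$, and the intersection is empty otherwise. The main obstacle here is the explicit identification of $\partial_\ell(\td(\unit))$ as a sum of block-subgroup Tate constructions on spheres; once that is established, the rest of the argument reduces to already-available tools (steel and residue fields for $\Exc{d}$, \cref{lem:finite-to-infinite}, and the chromatic-support method of \cref{Thm:equiv} via~\cite{BHNNNS19}).
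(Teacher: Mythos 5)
Your high-level strategy aligns closely with the paper's: both use the steel condition and residue-field identification for $\Exc{d}$, both invoke \cref{rem:unital-map} to propagate support along unital ring maps from the unit, and both appeal to \cref{lem:finite-to-infinite} to push from all finite heights to height~$\infty$. The difference is in how the chromatic input is obtained. The paper's proof treats \cite[Theorem~10.33]{excisive} as a black box: that theorem directly computes $\height_p(\partial_\ell\,\td(i_d L_{p,h}^f\Sphere)) = h-1$ when $d>\ell$ and a $p$-power partition of length~$\ell$ exists (and establishes the complementary vanishing), and the argument here simply harvests that computation via the ring map $\unit = i_d\Sphere \to i_d L_{p,h}^f\Sphere$. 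You instead try to reconstruct the internal mechanism of that cited theorem, proposing that $\partial_\ell(\td(\unit))$ decomposes over compositions $\lambda=(d_1,\ldots,d_\ell)\vdash d$ into block-subgroup Tate constructions for $\Sigma_\lambda\le\Sigma_d$, after which one can run the \cref{Thm:equiv}-style argument summand by summand. That decomposition is plausible in outline (it is the kind of cross-effect/derivative statement one expects from Goodwillie calculus), and your bookkeeping of when $\Sigma_\lambda$ is a nontrivial $p$-group correctly recovers the stated condition. However, you explicitly flag that the decomposition is not established, and indeed it is precisely the hard technical content delegated in the paper to \cite[Theorem~10.33]{excisive}. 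So your route is not a shortcut but a sketch of the cited theorem's proof; the gap you identify is real and is filled in the paper by citation rather than by argument.

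Two smaller remarks. First, your treatment of $\ell=d$ via the observation that $\td(\unit)$ is $L_d^{\complement}$-local is a nice direct argument that the paper absorbs into the general vanishing case; it is worth keeping in mind. Second, be careful about the logical role of \cref{lem:finite-to-infinite}: it lets you promote nonvanishing at all finite heights to nonvanishing at height~$\infty$, but for the vanishing direction you need a genuine vanishing statement (as the paper gets from the $h=\infty$ version of the cited result), since the lemma's contrapositive does not give vanishing at all finite heights from vanishing at height~$\infty$.
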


\begin{proof}
	Let $i_d:\Sp_{(p)} \to \Exc{d}$ denote the canonical geometric functor and consider $\smash{i_d(L_h^f\Sphere)} \in \Exc{d}$. The key theorem~\cite[Theorem 10.33]{excisive} establishes that if $d> \ell$ and $d$ admits a $p$-power partition of length $\ell$, then
		\[
			\height_p(\partial_\ell\, \td(i_d L_{p,h}^f\Sphere)) = h-1
		\]
	which implies that $\cat C_h \in \Supp(\partial_\ell\, \td(i_d L_{p,h}^f\Sphere))$. This implies $\cat C_h \in \Supp(\partial_\ell\, \td(\unit))$ by \cref{rem:unital-map} due to the ring map
		\[
			\unit=i_d \Sphere \to i_d L_{p,h}^f \Sphere.
		\]
 	Since this is true for all finite $1\le h <\infty$, it then follows from \cref{lem:finite-to-infinite} that $\cat C_\infty \in \Supp(\partial_\ell\, \td(\unit))$ too. This proves that, in this case, $\Supp(\td(\unit)) \cap L_\ell = L_\ell$.

	It remains to prove that $\partial_\ell\,\td(\unit) = 0$ in the contrary case. This is the $h=\infty$ version of the vanishing result of~\cite[Theorem 10.33]{excisive} interpreting $L_{p,\infty}^f$ as the $p$-local sphere. One readily verifies that the argument goes through with this interpretation.
\end{proof}

\begin{Exa}
	For $d=3$ and the prime $p=2$, the spectrum is the following picture

	\medskip
	\begin{center}
		 \def\diabulletsize{0.06cm}
\def\diayoffset{0.45}
\def\diaxoffset{0.45}
\def\dianumbasenodes{4}
\def\dianumbasenodesminus{3}
\def\dianumbasenodesminust{2}
\begin{tikzpicture}
	\coordinate (base) at (0,0);


	\coordinate (start) at (base);
	\filldraw[color=yellow!60] ($(start)+(-2*\diabulletsize,0)$) -- ($(start)+(-2*\diabulletsize,\dianumbasenodes*\diayoffset+1.25*\diayoffset)$)
	-- ($(start)+(2*\diabulletsize,\dianumbasenodes*\diayoffset+1.25*\diayoffset)$)
	-- ($(start)+(2*\diabulletsize,0)$) -- ($(start)+(-2*\diabulletsize,0)$);
	\filldraw[color=yellow!60] (start)+(0,0) circle (2*\diabulletsize);
	\filldraw[color=yellow!60] (start)+(0,\dianumbasenodes*\diayoffset+1.25*\diayoffset) circle (2*\diabulletsize);

	\coordinate (start) at ($(base)+(\diaxoffset,0)$);
	\filldraw[color=cyan!60] ($(start)+(-2*\diabulletsize,0)$) -- ($(start)+(-2*\diabulletsize,\dianumbasenodes*\diayoffset+1.25*\diayoffset)$)
	-- ($(start)+(2*\diabulletsize,\dianumbasenodes*\diayoffset+1.25*\diayoffset)$)
	-- ($(start)+(2*\diabulletsize,0)$) -- ($(start)+(-2*\diabulletsize,0)$);
	\filldraw[color=cyan!60] (start)+(0,0) circle (2*\diabulletsize);
	\filldraw[color=cyan!60] (start)+(0,\dianumbasenodes*\diayoffset+1.25*\diayoffset) circle (2*\diabulletsize);

	\draw ($(base)+(0,\dianumbasenodes*\diayoffset+1.25*\diayoffset)$) -- ($(base)+(\diaxoffset,\dianumbasenodes*\diayoffset+1.25*\diayoffset)$);
	\draw ($(base)+(\diaxoffset,\dianumbasenodes*\diayoffset+1.25*\diayoffset)$) -- ($(base)+(2*\diaxoffset,\dianumbasenodes*\diayoffset+1.25*\diayoffset)$);
	\foreach \n in {0,...,\dianumbasenodesminust} {
		\draw ($(base)+(\diaxoffset,\n*\diayoffset)$) -- ($(base)+(0,\n*\diayoffset+\diayoffset)$);
	};
	\foreach \n in {\dianumbasenodesminus} {
		\draw ($(base)+(\diaxoffset,\n*\diayoffset)$) -- ($(base)+(0.1*\diaxoffset,\n*\diayoffset+0.9*\diayoffset)$);
	};
	\foreach \n in {0,...,\dianumbasenodesminust} {
		\draw ($(base)+(2*\diaxoffset,\n*\diayoffset)$) -- ($(base)+(\diaxoffset,\n*\diayoffset+\diayoffset)$);
	};
	\foreach \n in {\dianumbasenodesminus} {
		\draw ($(base)+(2*\diaxoffset,\n*\diayoffset)$) -- ($(base)+(1.1*\diaxoffset,\n*\diayoffset+0.9*\diayoffset)$);
	};

	\coordinate (start) at ($(base)$);
	\draw (start) -- ($(start)+(0,\dianumbasenodes*\diayoffset-0.1*\diayoffset)$);
	\foreach \n in {1,...,\dianumbasenodes} {
			\draw [fill=blue] ($(start)+(0,\n*\diayoffset-\diayoffset)$) circle (\diabulletsize);
			\node at ($(start)+(-0.7,\n*\diayoffset-\diayoffset)$) {\scriptsize $\cat P([3],\n)$};
	};
	\node at ($(start)+(0,\dianumbasenodes*\diayoffset+0.75*\diayoffset)$) {$\vdots$};
	\draw [fill=blue] ($(start)+(0,\dianumbasenodes*\diayoffset+1.25*\diayoffset)$) circle (\diabulletsize);
	\node at ($(start)+(-0.7,\dianumbasenodes*\diayoffset+1.25*\diayoffset)$) {\scriptsize $\cat P([3],\infty)$};
	\node at ($(start)+(0,-0.75*\diayoffset)$) {\scriptsize $[3]$};

	\coordinate (start) at ($(base)+(\diaxoffset,0)$);
	\draw (start) -- ($(start)+(0,\dianumbasenodes*\diayoffset-0.1*\diayoffset)$);
	\foreach \n in {0,...,\dianumbasenodesminus} {
			\draw [fill=green] ($(start)+(0,\n*\diayoffset)$) circle (\diabulletsize);
	};
	\node at ($(start)+(0,4*\diayoffset+0.75*\diayoffset)$) {$\vdots$};
	\draw [fill=green] ($(start)+(0,\dianumbasenodes*\diayoffset+1.25*\diayoffset)$) circle (\diabulletsize);
	\node at ($(start)+(0,-0.75*\diayoffset)$) {\scriptsize $[2]$};

	\coordinate (start) at ($(base)+(2*\diaxoffset,0)$);
	\draw (start) -- ($(start)+(0,\dianumbasenodes*\diayoffset-0.1*\diayoffset)$);
	\foreach \n in {1,...,\dianumbasenodes} {
			\draw [fill=red] ($(start)+(0,\n*\diayoffset-\diayoffset)$) circle (\diabulletsize);
			\node at ($(start)+(0.65,\n*\diayoffset-\diayoffset)$) {\scriptsize $\cat P([1],\n)$};
	};
	\node at ($(start)+(0,4*\diayoffset+0.75*\diayoffset)$) {$\vdots$};
	\draw [fill=red] ($(start)+(0,\dianumbasenodes*\diayoffset+1.25*\diayoffset)$) circle (\diabulletsize);
	\node at ($(start)+(0.65,\dianumbasenodes*\diayoffset+1.25*\diayoffset)$) {\scriptsize $\cat P([1],\infty)$};
	\node at ($(start)+(0,-0.75*\diayoffset)$) {\scriptsize $[1]$};

\end{tikzpicture}

	\end{center}

	\noindent
	where $Y=\supp(P_3 h_{\Sphere}(3))$ is the yellow region (the third layer) and $\Supp(\tY(\unit))$ is precisely the cyan region (the second layer). Observe how all inclusions between the third and first layer are mediated through the second, as predicted by \cref{Thm:TIV}.
\end{Exa}

\begin{Rem}
	The Bousfield completion of $\Exc{d}$ along $Y$ may be identified with the tt-category of Borel equivariant $\Sigma_d$-spectra; see~\cite[Proposition 6.23]{excisive}. Note that this is the same as the Bousfield completion of $\SH(\Sigma_d)$ considered in \cref{exa:SH(G)}. The tt-geometry of $\SH(\Sigma_d)_{\mathrm{borel}}$ is not well-understood. Nevertheless, in order to compute the inclusions between $Y$ and $\Supp(\tY(\unit))$, one does not always need to have complete understanding of $\smash{\Spc(\hTd)}$. Indeed, in the $d=3$, $p=2$ example above, the left-hand inclusions between $Y=L_3$ and $\Supp(\tY(\unit))=L_2$ can be completely determined by Tate support computations. In more detail, for each $h$ we may consider the truncated category $\cat T_{\le h} = i_d(L^f_{h-1}) \otimes \cat T$ in the sense of~\cite[Example 6.15]{excisive}, which is just the restriction $\cat T\to \cat T|_{U_{\le h}}$ to the open subset
	\[
		U_{\le h} = \SET{\cat P_d([\ell],n)}{1\le \ell\le d,1\le n\le h} \subseteq \Spc(\cat T^{\dname}).
	\]
	By \cref{Cor:tate-supp-base-change}(a), we have $\Supp(\mathbb{t}_{Y \cap U_{\le h}}(\unit))=\Supp(\td(i_d(L^f_{h-1})))$. As discussed above, $\Supp(\mathbb{t}_3(i_3(\smash{L^f_{h-1}}))) = \SET{\cat P_3([2],n)}{1\le n \le h-1}$. We leave it as an exercise for the interested reader to see how these computations suffice to determine the inclusions between layer 3 and layer 2 above; the argument in~\cite[Lemma~11.15]{excisive} may be helpful. In summary, this provides a proof of concept where all the gluing information between $Y$ and $Y^{\complement}$ can be obtained by computing the supports of Tate rings. A more general understanding of this phenomenon would be desirable.
\end{Rem}


\newcommand{\etalchar}[1]{$^{#1}$}

\end{document}